\newcommand{\univ}{\mathcal{U}}
\newcommand{\hprop}{\mathsf{hProp}}
\newcommand{\Omegann}{\Omega_{\neg\neg}}
\newcommand{\markov}{\mathbf{MP}}
\newcommand{\markovind}{\mathbf{MI}}
\newcommand{\issuc}{\mathtt{IsSuc}}
\newcommand{\ect}{\mathbf{ECT}}
\newcommand{\asm}{\mathbf{Asm}}
\newcommand{\sets}{\mathbf{Set}}
\newcommand{\op}{\mathrm{op}}
\newcommand{\cubasm}{\mathbf{Asm}^{\square^\op}}
\newcommand{\ZZ}{\mathbb{Z}}
\newcommand{\fibre}{\mathtt{hFibre}}
\newcommand{\dom}{\operatorname{dom}}
\newcommand{\dec}{\mathtt{Dec}}
\newcommand{\erase}{\mathtt{erase}}
\newcommand{\sone}{\mathbb{S}^1}
\newtheorem{thm}{Theorem}[section]
\newtheorem{prop}[thm]{Proposition}
\newtheorem{lemma}[thm]{Lemma}
\newtheorem{cor}[thm]{Corollary}
\newtheorem{rmk}[thm]{Remark}
\newtheorem{axiom}[thm]{Axiom}
\theoremstyle{definition}
\newtheorem{example}[thm]{Example}
\newtheorem{defn}[thm]{Definition}
\newtheorem{notation}[thm]{Notation}
\newcommand{\inl}{\mathtt{inl}}
\newcommand{\inr}{\mathtt{inr}}
\newcommand{\NN}{\mathbb{N}}
\begin{document}
\title{Oracle Modalities}
\author{Andrew W Swan}
\thanks{This material is based upon work supported by the Air Force Office of Scientific Research under award number FA9550-21-1-0024.}
\date{\today}
\begin{abstract}
  We give a new formulation of Turing reducibility in terms of higher modalities, inspired by an embedding of the Turing degrees in the lattice of subtoposes of the effective topos discovered by Hyland. In this definition, higher modalities play a similar role to I/O monads or dialogue trees in allowing a function to receive input from an external oracle. However, in homotopy type theory they have better logical properties than monads: they are compatible with higher types, and each modality corresponds to a reflective subuniverse that under suitable conditions is itself a model of homotopy type theory.

  We give synthetic proofs of some basic results about Turing reducibility in cubical type theory making use of two axioms of \emph{Markov induction} and \emph{computable choice}. Both axioms are variants of axioms already studied in the effective topos. We show they hold in certain reflective subuniverses of cubical assemblies, demonstrate their use in some simple proofs in synthetic computability theory using modalities, and show they are downwards absolute for oracle modalities. These results have been formalised using cubical mode of the Agda proof assistant.

  We explore some first connections between Turing reducibility and homotopy theory. This includes a synthetic proof that two Turing degrees are equal as soon as they induce isomorphic permutation groups on the natural numbers, making essential use of both Markov induction and the formulation of groups in HoTT as pointed, connected, 1-truncated types. We also give some simple non-topological examples of modalities in cubical assemblies based on these ideas, to illustrate what we expect higher dimensional analogues of the Turing degrees to look like.
\end{abstract}
\maketitle

\section{Introduction}
\label{sec:introduction}

\subsection{Synthetic computability theory}
\label{sec:synth-comp-theory}

Synthetic computability \cite{richmanwithouttears, bauerfirststeps, bauerfixedpoint, forsterthesis} is an approach to computability where instead of working directly with explicit descriptions of objects, one works with simpler, more natural constructions in constructive mathematics, which can then be interpreted in realizability models, to recover the original versions. Since the Turing degrees are a major topic within computability theory, it is natural to ask what is the best way to talk about Turing reducibility synthetically. Recently, multiple synthetic definitions of Turing reducibility have been developed \cite{bauerexcursion, forsterkirstmuck}. We will consider a new variation of one of the earliest synthetic approaches to Turing reducibility: Bauer's suggestion in the conclusion to \cite{bauerfirststeps} of applying Hyland's embedding of the Turing degrees in the lattice of local operators in the effective topos \cite[Section 17]{hylandeff}.

\subsection{Oracles and monads}
\label{sec:oracles-monads}

Before introducing subtoposes and modalities, we first consider some related constructions using monads, for comparison.

One of the classic examples of applications of monads in programming due to Moggi in \cite[Example 1.1]{moggi91} is that of \emph{interactive input}. The idea is to model programs that can call a function to accept input of type \(B\). The input function could be called at any point, multiple times or not at all. The appropriate monad in this case is the free monad \(T\) on the polynomial endofunctor \(X \mapsto X^B\).\footnote{See e.g. \cite[Section 6]{gambinohylanddepw} for details on free monads on polynomial endofunctors.} Concretely we can describe \(T X\) as the type of \(B\)-branching trees with leaves from \(X\), or as the inductive type generated by constructors \(\eta_X : X \to T X\) and \(i : (B \to T X) \to T X\). We represent programs that do not request input as \(\eta_X(x)\). We think of an element of the form \(i(f)\) where \(f : B \to T X\) as a program that first accepts an element \(b\) of \(B\) as input, and then continues using \(f\), returning \(f(b)\) as output.

This idea of modelling input and output through free monads on polynomial endofunctors was further developed by Hancock and Setzer in \cite{hancocksetzer}. In particular, instead of just one input function, we can have a collection of input functions indexed by a type \(A\), making \(B\) a family of types \(B : A \to \univ\). Note that \((A, B)\) is just a (non dependent) polynomial and the inductive type in \cite[Section 2]{hancocksetzer} is the free monad on the corresponding polynomial endofunctor. This idea can also be understood through Escard\'{o}'s notion of dialogue tree \cite{escardoeffectfulforcing}, which as he points out is equivalent to an earlier notion due to Kleene \cite{kleenerfqft1}. The same idea is also related to the synthetic definition of Turing reducibility due to Forster, Kirst and M\"{u}ck, as explained in \cite[Section 2]{forsterkirstmuck}.

We can view Turing machines with oracle as a special case of this construction. Suppose we are given a function \(\chi : \NN \to \NN\) and for each natural number \(n\) we have a query operation, where the Turing machine can output a request for the value of \(\chi\) at \(n\) and then receive as input the value \(\chi(n)\). We can describe the polynomial as follows: The type of constructors is \(\NN\) and an element of \(B(n)\) is a pair consisting of a number \(m\) and a proof that \(\chi(n)\) is defined and equal to \(m\).

\subsection{Oracles, Lawvere-Tierney topologies, and subtoposes}
\label{sec:oracles-subtoposes}

One the main motivations for this paper is the classic result due to Hyland \cite[Section 17]{hylandeff}, and further developed by Phoa \cite{phoarelcompeff} and Van Oosten \cite{vanoostenrelrec}, that the Turing degrees embed into the lattice of Lawvere-Tierney topologies of the effective topos. By the general theory of toposes \cite[Sections A4.3-4.4]{theelephant} each Lawvere-Tierney topology in a topos \(\mathcal{E}\) gives rise uniquely to a subtopos of \(\mathcal{F} \hookrightarrow \mathcal{E}\) together with a sheafification operator that reflects \(\mathcal{E}\) down to \(\mathcal{F}\). In fact sheafification is an idempotent monad with \(\mathcal{F}\) the category of algebras. The sheafification monads for the subtoposes considered by Hyland are somewhat related to the monads considered in Section~\ref{sec:oracles-monads}. However, they behave differently in two important respects.

\subsubsection*{Subtoposes are toposes} Each subtopos of the effective topos is a topos in itself. As such, any proof in higher order logic can carried out not just in the effective topos itself, but also in all of its subtoposes. In this paper, we will refer to this idea as \emph{relativisation}, viewing it as closely related to relativisation in computability theory. In contrast, the categories of algebras for the monads in Section~\ref{sec:oracles-monads} do not need to be toposes, or even cartesian closed.

\subsubsection*{Subtoposes are subcategories} The definition of sheaf for a Lawvere-Tierney topology \(j\) is that it has a \emph{unique} lift against all \(j\)-dense maps. As such, we can think of sheaves as objects satisfying a \emph{property}, rather than objects with additional \emph{structure}. Hence the natural notion of morphism between sheaves is just any map between the objects. In contrast, the algebras from Section~\ref{sec:oracles-monads} have additional structure which is not preserved by all maps.

In this way, we can think of sheafification as having better logical and categorical properties than the monads of Section~\ref{sec:oracles-monads}, while retaining the good computational properties.

\subsection{Homotopy type theory and higher modalities}
\label{sec:modal-homot-type}

In order to generalise Hyland's result from topos theory/extensional type theory to higher topos theory/homotopy type theory, the natural approach is that of \emph{higher modalities}. Higher modalities have a simple definition in homotopy type theory, leading to a rich and elegant theory as developed by Rijke, Shulman and Spitters \cite{rijkeshulmanspitters}. This general theory applies without issue to arbitrary types without e.g. needing to restrict anywhere to \(0\)-truncated types. This is in contrast to the theory of monads, which has proved difficult to formalise in homotopy type theory due to coherence issues.\footnote{For instance, such issues are discussed by Finster et al. in \cite{finsteralliouxsozeau} where they propose a somewhat elaborate definition of monad requiring an extension of type theory.}

Rijke et al. showed in \cite[Section 3.3]{rijkeshulmanspitters} that Lawvere-Tierney topologies naturally generalise to a kind of modality that they refer to as \emph{topological modality}. Topological modalities are a special case of \emph{nullification}, which they showed can be constructed as a higher inductive type. Nullification takes as input a type \(A : \univ\) together with a family of types \(B : A \to \univ\), i.e. a polynomial. If we examine how the nullification \(\bigcirc X\) of a type \(X\) is defined (in \cite[Section 2.2]{rijkeshulmanspitters}) we see that the point constructors are the same as the constructors of the free monad on the polynomial endofunctor \(P_{(A, B)}\) that we saw in Section~\ref{sec:oracles-monads}. However, it also has path constructors, which play an important role in the general theory. Just as a Lawvere-Tierney topology gives rise to a subtopos of a topos, in HoTT a topological modality gives us a reflective subuniverse, which is itself a model of HoTT.

In particular given \(a : A\) and a map \(f : B(a) \to \NN\), we freely add a new natural number \(\mathtt{sup}(a, f)\) to \(\bigcirc \NN\). The path constructors allow us, in particular, to construct a path \(\mathtt{sup}(a, f) = f(b)\) given an element \(b\) of \(B(a)\), and thereby a proof of \(\sum_{n : \NN} \mathtt{sup}(a, f) = \eta(n)\). On the other hand, if we are only given an element of \(\neg\neg B(a)\), then we know using classical logic that there is \(n : \NN\) such that \(\mathtt{sup}(a, f) = n\), but are not able to find \(n\) constructively. Although we are not necessarily able to find the witness \(n\) in general, the path constructor still has a noticeable effect. Any well defined map out of \(\bigcirc \NN\) has to respect the path constructors. In this way \(\mathtt{sup}(a, f)\) is ``indistinguishable'' from elements of \(\bigcirc \NN\) of the form \(\eta(n)\). For example, we can always show that a map \(\bigcirc \NN \to \bigcirc 2\) is constant as soon as its composition with \(\eta\) is constant.

\subsection*{Acknowledgements}
I'm grateful for useful comments, discussions and suggestions by Mathieu Anel, Carlo Anguili and Egbert Rijke.

\section{Methodology, terminology and notation}
\label{sec:background}

Before giving our results, we will review the existing work that we will build on, and fix some terminology and notation.

\subsection{Homotopy type theory and cubical type theory}
\label{sec:homotopy-type-theory}

We will make use of homotopy type theory (HoTT), a relatively new approach to type theory that draws on ideas from homotopy theory. The key new idea in HoTT is that identity types can have interesting, highly non trivial structure. For example, given two proofs of equality \(p, q : x = y\), it was common traditionally in type theory to ignore the possibility that \(p \neq q\) or even to assume that any two equality proofs must be equal (sometimes referred to as \emph{axiom \(K\)}). However, in HoTT we visualise equality proofs as paths in a topological space up to homotopy. There are many examples of spaces containing paths that are not homotopic, the simplest example being the circle, which has for each point \(x\) and each \(n : \ZZ\) a path from \(x\) to itself that loops round the circle \(n\) times.

Formally, we obtain this structure through \emph{higher inductive types} (HITs) and the axiom of \emph{univalence}. Inductive types are used throughout type theory, and can be informally described as the simplest or ``least'' types that are closed under certain rules telling us how to construct new elements from old ones. HITs generalise this idea by allowing us to not only construct new elements, but also new proofs of equality between elements. We refer to the former as \emph{point constructors} and the latter as \emph{path constructors}. Many spaces studied in homotopy theory can be constructed as HITs. For example, the circle is the HIT generated by a single point \(x_0\) and a path \(x_0 = x_0\). The univalence axiom tells us that the identity type on the universe of small types, which we will write as \(\univ\), is given by equivalence of types. That is, for \(A, B : \univ\) the type \(A = B\) is equivalent to the type of equivalences from \(A\) to \(B\). In particular univalence tells us that equality can have computational meaning: e.g. a proof of \(\NN = \NN\) is precisely a permutation of \(\NN\).

We will assume the reader is familiar with standard concepts from homotopy type theory, such as path induction, transport, hlevels, truncation, the homotopy fibres of a map, equivalences and univalence which can all be found in standard references such as \cite{hottbook} and \cite{rijkebook}.
\begin{notation}
  \begin{enumerate}
  \item We will refer to \(-1\)-truncated types as \emph{propositions} and \(0\)-truncated types as \emph{sets}.
  \item For a family of types \(B : A \to \univ\) and a path \(p : a = a'\) in \(A\), we will write \(p^\ast\) for the map \(B(a') \to B(a)\) given by transport.
  \item For \(f : A \to B\) and \(b : B\) we write the homotopy fibre of \(b\) as \(\mathtt{hFibre}_f(x)\).
  \item We write \(\mathtt{Dec}(A)\) to mean \(A + \neg A\).
  \item For a type \(X\) with element \(x : X\) we write the loop space \(x = x\) as \(\Omega(X, x)\).
  \end{enumerate}
\end{notation}

\emph{Cubical type theory} \cite{coquandcubicaltt, abchfl} is a later development which adds computational rules to type theory that can be used to construct HITs and prove univalence, while retaining good computational properties of type theory such as canonicity\cite{hubercanonicity} and normalisation\cite{sterlingangiuli}. We have formalised the main results of this paper\footnote{The formalisation is available at \url{https://github.com/awswan/oraclemodality}.} in cubical Agda \cite{cubicalagda}, a proof assistant based on cubical type theory.

\subsection{Modalities}
\label{sec:modalities}

We will make heavy use of theory of higher modalities, as developed in HoTT by Rijke, Shulman and Spitters. We recall some definitions, notation and statements of theorems below, but refer the reader to \cite{rijkeshulmanspitters} for full details.

\begin{defn}
  A \emph{modal operator} consists of a map \(\bigcirc : \univ \to \univ\) together with a family of maps \(\eta_X : X \to \bigcirc X\) for \(X : \univ\). We call \(\eta\) the \emph{unit} of the modal operator.

  We say a type \(X\) is \emph{\(\bigcirc\)-modal} if \(\eta_X : X \to \bigcirc X\) is an equivalence. We say a map is \emph{\(\bigcirc\)-modal} if its fibres are \(\bigcirc\)-modal. We write the type of all small \emph{\(\bigcirc\)-modal} types as \(\univ_\bigcirc\). When \(X\) is a proposition we will also refer to \(\bigcirc\)-modal as \emph{\(\bigcirc\)-stable}.

  We say \(X\) is \emph{\(\bigcirc\)-connected} if \(\bigcirc X\) is contractible. We say a map is \emph{\(\bigcirc\)-connected} if its fibres are \(\bigcirc\)-connected. When \(X\) is a proposition we will also refer to \(\bigcirc\)-connected as \emph{\(\bigcirc\)-dense}.

  We say \(X\) is \emph{\(\bigcirc\)-separated} if for all \(w, z : \bigcirc X\), the identity type \(w = z\) is \(\bigcirc\)-modal.
\end{defn}

\begin{defn}
  A \emph{higher modality} consists of a modal operator \(\bigcirc, \eta\) together with terms \[\mathtt{elim}_X : \prod_{x : X} \bigcirc(P(\eta(x))) \to \prod_{z : \bigcirc X}\bigcirc(P(z))\] for each \(X : \univ\) and \(P : \bigcirc X \to \univ\) and \[\mathtt{elim\beta}_X(f, x) : \mathtt{elim}_X(f)(\eta(x)) = f(x)\] for each \(f : \prod_{x : X} \bigcirc(P(\eta(x)))\) and \(x : X\), and such that \(x = y\) is \(\bigcirc\)-modal for \(x, y : \bigcirc X\).

  We will usually refer to higher modalities just as \emph{modalities}.
\end{defn}

\begin{thm}
  If \(\bigcirc\) is a modality, then \(\univ_\bigcirc\) is closed under \(\Pi\)-types, \(\Sigma\)-types and identity types. We say that the \(\bigcirc\)-modal types form a \emph{\(\Sigma\)-closed reflective subuniverse} of \(\univ\).
\end{thm}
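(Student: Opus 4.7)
The plan is to verify each of the three closure conditions using the induction principle $\mathtt{elim}$ and its computation rule $\mathtt{elim\beta}$ from the definition of higher modality. Identity types fall out of clause 4 almost immediately, while for $\Pi$- and $\Sigma$-types the work is to construct an inverse to $\eta$, exploiting the fact that it suffices to define such an inverse into a target that is already modal.

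For identity types, suppose $A$ is $\bigcirc$-modal, so that $\eta_A : A \to \bigcirc A$ is an equivalence. Then for $x, y : A$ the map $\mathtt{ap}(\eta_A) : (x = y) \to (\eta_A(x) = \eta_A(y))$ is also an equivalence, and its codomain is $\bigcirc$-modal by the final clause in the definition of higher modality. Being $\bigcirc$-modal is invariant under equivalence (by naturality of $\eta$ and the two-out-of-three property), so $x = y$ is $\bigcirc$-modal. For $\Pi$-types, given $A : \univ$ and $B : A \to \univ_\bigcirc$, I would construct an inverse $r : \bigcirc \prod_{a:A} B(a) \to \prod_{a:A} B(a)$ to $\eta_{\prod B}$ pointwise: fixing $a : A$, apply $\mathtt{elim}$ with the constant family $P(z) := B(a)$ to the map $g \mapsto \eta_{B(a)}(g(a))$, then postcompose with the inverse of $\eta_{B(a)}$. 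The identity $r \circ \eta_{\prod B} = \mathrm{id}$ holds pointwise in $a$ by $\mathtt{elim\beta}$, and $\eta_{\prod B} \circ r = \mathrm{id}$ lives in the identity type of $\bigcirc \prod_{a:A} B(a)$, which is $\bigcirc$-modal by clause 4; so it suffices to check it on $\eta$-images, where it reduces to the first round-trip.

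For $\Sigma$-types, given $A : \univ_\bigcirc$ and $B : A \to \univ_\bigcirc$, I would build $r : \bigcirc \sum_{a:A} B(a) \to \sum_{a:A} B(a)$ in two stages. First, applying $\mathtt{elim}$ with constant family $P(z) := A$ to $(a, b) \mapsto \eta_A(a)$ and postcomposing with $\eta_A^{-1}$ gives $\alpha : \bigcirc \sum_{a:A} B(a) \to A$ with $\alpha(\eta(a, b)) = a$. Second, applying $\mathtt{elim}$ with the dependent family $P(z) := B(\alpha(z))$ to $(a, b) \mapsto \eta_{B(a)}(b)$, using the preceding equation to type-check the input, and then postcomposing pointwise with $\eta_{B(\alpha(z))}^{-1}$, produces $\beta : \prod_{z : \bigcirc \sum B} B(\alpha(z))$. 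Setting $r(z) := (\alpha(z), \beta(z))$, the round-trips are handled exactly as in the $\Pi$-case.

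The main obstacle is the bookkeeping in the dependent $\Sigma$-step: feeding $(a, b) \mapsto \eta_{B(a)}(b)$ into $\mathtt{elim}$ under the family $P(z) := B(\alpha(z))$ requires transporting along the $\mathtt{elim\beta}$-path for $\alpha$, and the subsequent verification that $r(\eta(a, b)) = (a, b)$ is a dependent pair equality whose second component mixes the $\mathtt{elim\beta}$ computations for $\alpha$ and for $\beta$. This is routine but fiddly path algebra, and corresponds to the careful calculation carried out in Rijke--Shulman--Spitters.
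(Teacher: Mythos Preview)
The paper does not supply its own proof of this statement; it is listed among the background results on modalities with an implicit appeal to \cite{rijkeshulmanspitters}. Your proposal is correct and is essentially the argument carried out in that reference: one builds a retraction of $\eta$ using $\mathtt{elim}$ and $\mathtt{elim\beta}$, and then upgrades the retraction to an equivalence using the last clause of the definition (modality of identity types in $\bigcirc X$) exactly as you describe. The only remark worth making is that your ``invariance under equivalence'' step for identity types is most cleanly handled either via univalence or, as in Rijke--Shulman--Spitters, by first proving the retraction-implies-equivalence lemma for $\eta$; either route is fine here.
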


\begin{defn}
  A modality \(\bigcirc\) is \emph{lex} if \(\univ_\bigcirc\) is \(\bigcirc\)-modal.
\end{defn}

\begin{defn}
  Suppose we are given a polynomial, i.e. a type \(A : \univ\) together with a family of types \(B : A \to \univ\). A type \(X\) is \(B\)-\emph{null} if for all \(a : A\), the constant function map \(X \to X^{B(a)}\) is an equivalence.
\end{defn}

\begin{thm}
  Given a polynomial \((A, B)\), there is a modality \(\bigcirc\) such that a type \(X\) is \(\bigcirc\)-modal precisely if it is \(B\)-null. We refer to \(\bigcirc X\) as the \emph{nullification} of \(X\).
\end{thm}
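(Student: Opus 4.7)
The plan is to construct $\bigcirc X$ as a higher inductive type with three constructors: the unit $\eta_X : X \to \bigcirc X$; for each $a : A$ and each $f : B(a) \to \bigcirc X$ an \emph{extension} point $\mathtt{ext}(a, f) : \bigcirc X$; and for each such $a, f$ and $b : B(a)$ a path $\mathtt{ext}_\beta(a, f, b) : \mathtt{ext}(a, f) = f(b)$, together with whatever higher coherence data the cubical implementation forces so that the three constructors fit together. The idea is that $\mathtt{ext}(a, f)$ is a freely adjoined ``constant value'' of $f$, and $\mathtt{ext}_\beta$ witnesses that $f$ really is the constant function at this value.

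First I would verify that $\bigcirc X$ is $B$-null. Surjectivity of the constant map $c : \bigcirc X \to (\bigcirc X)^{B(a)}$ is immediate: given $f : B(a) \to \bigcirc X$, the element $\mathtt{ext}(a, f)$ together with $\mathtt{ext}_\beta$ provides a preimage. For the quasi-inverse one checks that the round trip $c \circ \mathtt{ext}(a, -)$ is equal to the identity, using the HIT's path induction principle and the path constructor $\mathtt{ext}_\beta$; this is the subtle step, and the main technical obstacle.

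Next I would establish the universal property that any map $g : X \to Y$ into a $B$-null type $Y$ extends uniquely along $\eta_X$. The recursor defines $\bar g$ on $\eta$ by $g$, on $\mathtt{ext}(a, f)$ by the unique value of the extension of $\bar g \circ f$ guaranteed by $B$-nullness of $Y$, and sends $\mathtt{ext}_\beta$ to the witness that this value is in fact constant; uniqueness of $\bar g$ then follows by induction on $\bigcirc X$, reusing $B$-nullness of $Y$ on identity types, which holds because $B$-nullness is preserved by identity types in any reflective subuniverse of this form.

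Finally, I would upgrade the resulting reflective subuniverse of $B$-null types to a modality. For this it suffices to verify $\Sigma$-closure: if $Y$ is $B$-null and $Z : Y \to \univ$ is a family of $B$-null types, then $\sum_{y : Y} Z(y)$ is $B$-null, which one checks pointwise by extending the first projection and then the dependent second component. By the theorem of Rijke-Shulman-Spitters relating $\Sigma$-closed reflective subuniverses to modalities, this yields the dependent eliminator $\mathtt{elim}_X$ with its computation rule and the $\bigcirc$-modality of identity types on $\bigcirc X$, completing the proof.
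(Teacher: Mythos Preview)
The paper does not prove this theorem itself; it is stated as background in Section~\ref{sec:modalities}, with the construction deferred to Rijke--Shulman--Spitters \cite{rijkeshulmanspitters}. Your outline follows the same overall strategy as that reference: build the nullification as a HIT, verify it is $B$-null, derive the universal property, and upgrade to a modality via $\Sigma$-closure.

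There is, however, a genuine gap in your HIT. With only the three constructors $\eta$, $\mathtt{ext}$, and $\mathtt{ext}_\beta$, the type $\bigcirc X$ is \emph{not} $B$-null in general. The constructor $\mathtt{ext}_\beta$ supplies exactly one of the two round-trip homotopies, namely $c \circ \mathtt{ext}(a,-) \sim \mathrm{id}$ on $(\bigcirc X)^{B(a)}$; this is what you called ``surjectivity'' and it is indeed immediate, so it cannot also be ``the subtle step''. The direction that is actually missing is $\mathtt{ext}(a,-) \circ c \sim \mathrm{id}$, i.e.\ a path $\mathtt{ext}(a,\lambda b.\,z) = z$ for every $z : \bigcirc X$, and this cannot be manufactured by induction on your HIT. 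Already in the base case $z = \eta(x)$ the only candidate path is $\mathtt{ext}_\beta(a,\lambda b.\,\eta(x),b)$ for some $b : B(a)$, which does not exist when $B(a)$ is empty. Concretely, if some $B(a_0)$ is empty then the correct nullification must be contractible, whereas your HIT freely adjoins the isolated point $\mathtt{ext}(a_0,!)$ with no path connecting it to anything. This is not ``higher coherence data the cubical implementation forces'' on your three constructors; it is a genuinely missing constructor. The fix, as in \cite{rijkeshulmanspitters}, is to add an explicit path constructor $\mathtt{isExt}(a,z) : \mathtt{ext}(a,\lambda b.\,z) = z$ together with a 2-path making $\mathtt{ext}(a,-)$ a half-adjoint (equivalently, bi-invertible) inverse to $c$. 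With that in place, the remainder of your outline---the recursor into $B$-null types, $\Sigma$-closure of null types, and the appeal to the equivalence between $\Sigma$-closed reflective subuniverses and modalities---goes through.
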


\begin{thm}
  For families of propositions \(a : A \vdash B(a) : \hprop\), nullification is a lex modality. In this case we will also refer to nullification as \emph{sheafification}. Nullification restricts to a Lawvere-Tierney topology \(j\) on propositions, and agrees with the usual notion of \(j\)-sheafification on sets. We refer to such modalities as \emph{topological} modalities.
\end{thm}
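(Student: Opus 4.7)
The plan is to verify the three claims in turn, with the main content being the lexness argument.

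First, I would establish the key closure lemma: $\bigcirc P$ is a proposition whenever $P$ is. Given $x, y : \bigcirc P$, the identity $x = y$ is automatically $\bigcirc$-modal by the path-constructor clause in the definition of a modality, so I can apply the eliminator $\mathtt{elim}$ in both variables. The $\eta$-case uses that $P$ is a proposition; the case where one variable is of the form $\mathtt{sup}(a, f)$ uses the path constructor $f(b) = \mathtt{sup}(a, f)$ together with the assumption that $B(a)$ is a proposition, which collapses any higher-dimensional coherence that would otherwise appear. Once this is established, $j := \bigcirc|_{\hprop}$ lands in $\hprop$, and the Lawvere-Tierney axioms $p \to j(p)$, $j(j(p)) \leftrightarrow j(p)$, and $j(p \wedge q) \leftrightarrow j(p) \wedge j(q)$ follow respectively from the unit, idempotence of the reflection, and $\Sigma$-closure of $\univ_\bigcirc$ stated in the preceding theorem.

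For lexness, I would use the characterization from Rijke-Shulman-Spitters that $\bigcirc$ is lex iff the canonical map $\bigcirc(x =_X y) \to (\eta(x) =_{\bigcirc X} \eta(y))$ is an equivalence for every $X : \univ$ and $x, y : X$. The strategy is to reduce this to the closure lemma above: the modal types already form a full reflective subuniverse closed under identity types and $\Sigma$-types, and when the nullification generators are propositions there is no higher coherence left to check, so the reflection preserves finite limits. Equivalently, one can identify the resulting reflective subuniverse with the category of sheaves for the Lawvere-Tierney topology $j$ constructed in the first paragraph, and appeal to the classical fact that $j$-sheafification is left exact.

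Third, for agreement with classical $j$-sheafification on sets, I would compare universal properties: a set $X$ is a $j$-sheaf iff it is right orthogonal to every $j$-dense monomorphism, and these are generated under pullback and composition by the maps $B(a) \to \mathbf{1}$, whose $j$-density holds by construction since the $B(a)$ are among the nullified types. Orthogonality of a set $X$ to these generators then unfolds to the condition that $X \to X^{B(a)}$ is an equivalence, which is exactly $B$-nullity. I expect the main obstacle to be the lexness step: the path constructors of the nullification HIT must be shown to interact correctly with identity types, and while the propositional hypothesis rules out the higher-dimensional complications that prevent lexness for general nullifications, carrying out the induction and correctly invoking the RSS machinery still demands some care.
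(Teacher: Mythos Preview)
The paper does not give its own proof of this theorem; it is stated in Section~2.2 as background recalled from Rijke--Shulman--Spitters (their Section~3.3, specifically Corollary~3.12), so any comparison is against the RSS argument.

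Your closure lemma---that $\bigcirc P$ is a proposition whenever $P$ is---is correct, but it holds for \emph{every} modality, not just topological ones. The modal induction principle $\mathtt{elim}$ only requires handling the $\eta$-case; there is no separate $\mathtt{sup}$-case to treat, so your invocation of the hypothesis that $B(a)$ is a proposition at that point is misplaced. Since, for instance, $0$-truncation also sends propositions to propositions yet is not lex, this lemma cannot be what distinguishes topological modalities, and in particular cannot carry the weight you place on it in the next paragraph.

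This leaves the lexness step as a genuine gap. Neither route you sketch is an argument. ``No higher coherence left to check'' is an assertion, not a proof; and appealing to classical left exactness of $j$-sheafification is circular here, since that is precisely the internal statement to be established in HoTT. The actual RSS proof goes through one of the equivalent characterisations of lexness in their Theorem~3.1, and the place where the propositional hypothesis on the generators enters is that the path types of each $B(a)$ are contractible (equivalently, the diagonal $B(a)\to B(a)\times B(a)$ is an equivalence), hence automatically $\bigcirc$-connected. This is what one needs to verify the criterion that path types of $\bigcirc$-connected types are again $\bigcirc$-connected. Your proposal never isolates this point.

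Your part~(3) is broadly on the right track, but note that identifying the $j$-dense monomorphisms with the $\bigcirc$-connected monomorphisms already uses lexness, so this part does not stand independently of the gap above.
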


For topological modalities, reflective subuniverses can be viewed as models of homotopy type theory, as worked out in detail by Quirin in \cite[Section 4.5]{quirinthesis}.

\subsection{Partial functions in type theory}

We use an approach to partial functions in type theory mostly similar to well known formulations such as \cite{rosolinithesis, escardoknapp}, with some minor adjustments.

We first consider the most general possible definition of partial element of a type, which is nonetheless sufficient to fix some notation that we will use throughout the paper.
\begin{defn}
  \label{def:partialelt}
  A \emph{partial element} of a type \(X\) is a type \(D : \univ\) together with a map \(D \to X\).
\end{defn}

\begin{notation}
  Given a partial element \(\xi := (D, f)\) of \(X\), we write \(\xi {\downarrow}\) for \(D\), and \(\xi \downarrow= x\) to mean \(\sum_{d : D} f(d) = x\). We will refer to \(D\) as the \emph{domain} of \(\xi\), and say that \(\xi\) is \emph{defined} when \(D\) is inhabited. If we have a witness, \(d\) of \(\xi {\downarrow}\) that is clear from the context, we will sometimes refer to \(f(d)\) just as \(\xi\).
\end{notation}

Note that the type of all partial elements of \(X\) does not belong to the universe \(\univ\) of small types, and in general \(D\) does not need to be a proposition. Because of this, it is usually more convenient to restrict to the subtype of partial elements where the domain is a \(\neg \neg\)-stable proposition, formally defined as \(\partial X\) below. We will restrict to the case when class of all \(\neg \neg\)-stable propositions is a small type \(\Omegann\) (see Section \ref{sec:cubical-assemblies-1} below), making \(\partial X\) a small type.
\begin{defn}
  For a type, \(X\), we will write \(\partial X\) for \(\sum_{P : \Omega_{\neg \neg}} X^P\).
\end{defn}
We can think of \(\partial X\) as the collection of well defined partial elements with \(\neg\neg\)-stable domain.

\subsection{Assemblies}
\label{sec:assemblies}

We will use realizability to justify the non standard axioms that we assume, and to relate our ideas to computability, following earlier work in synthetic computability theory, particularly that of Bauer \cite{bauerfirststeps}. The realizability models we will consider are all based on the category of \emph{assemblies}. We give some basic definitions below, but the reader is referred to the standard reference of \cite{vanoosten} for more details about on the categorical approach to realizability that we use here. For this paper we will only consider realizability over the first Kleene algebra \(\mathcal{K}_1\), i.e. Turing computable functions.

\begin{notation}
  We write \(\varphi_e\) for the partial computable function coded by the natural number \(e\). For proofs about the realizability model will sometimes write \(\varphi_e(n)\) as \(e n\) following \cite{vanoosten}.
\end{notation}

\begin{defn}
  An \emph{assembly} is a pair \(A = (| A |, \Vdash)\) where \(| A |\) is a set and \(\Vdash\) is a binary relation \(\Vdash \,\subseteq\, \NN \times | A |\) such that for all \(a : | A |\) there exists \(e : \NN\) such that \(e \Vdash a\). We say that \(e\) \emph{realizes} \(a\).

  Assemblies form a category, which we will denote \(\asm\) with morphisms \((| A |, \Vdash_A) \to (| B |, \Vdash_B)\) defined as a function \(f : | A | \to | B |\) such that there exists \(e : \NN\) such that whenever \(a : | A |\) and \(d \Vdash a\), we have \(e d {\downarrow}\) with \(e d \Vdash f(a)\). We say that \(e\) \emph{tracks} or \emph{realizes} \(f\).
\end{defn}

The category of assemblies is locally cartesian closed, and thereby a model of extensional type theory.

\begin{defn}
  We say an assembly \((A, \Vdash)\) is \emph{uniform} if there is \(e : \NN\) such that \(e \Vdash a\) for all \(a : A\).
\end{defn}

\begin{rmk}
  Every uniform assembly is isomorphic to one where the set of realizers of each \(a : A\) is the singleton \(\{0\}\). For convenience, we will always assume this is the case when working with a uniform assembly.
\end{rmk}

Note that we can view any set as a uniform assembly, and that this defines a functor \(U : \sets \to \asm\).\footnote{In many texts, including \cite{vanoosten}, \(U\) is written \(\nabla\). However, in this paper we will reserve \(\nabla\) for the corresponding modality \(\Delta U\) on \(\asm\), and the analogous construction in cubical assemblies.}

\subsection{Cubical assemblies}
\label{sec:cubical-assemblies-1}

Cubical sets \cite{bchcubicalsets, bchunivalence, coquandcubicaltt, awodey19, abchfl} can be used to give models of HoTT within a constructive setting. In particular, we can define cubical sets inside the locally cartesian closed category of assemblies, to obtain \emph{cubical assemblies}, \(\cubasm\). The first complete model of HoTT in cubical assemblies was developed by Uemura \cite{uemuracubasm}, using the techniques developed by Licata, Orton, Pitts and Spitters \cite{pittsortoncubtopos, lops} where one first checks cubical assemblies are locally cartesian closed, and then carries out almost all of the interpretation of HoTT within the internal logic of cubical assemblies.

The author and Uemura further developed cubical assemblies in \cite{uemuraswan}. We showed that Church's thesis, the axiom that all functions \(\mathbb{N} \to \mathbb{N}\) are computable, is false in cubical assemblies, in contrast to many earlier, well known realizability models such as assemblies and the effective topos, where it holds. However, we also showed that Church's thesis is nonetheless consistent with HoTT, by constructing a reflective subuniverse of cubical assemblies where it does hold.

We will make use some more recent work on cubical assemblies by the author, who in \cite{swandnsprop} gave a construction of a classifier for \(\neg\neg\)-stable propositions in cubical assemblies. This justifies the following axiom, which we will assume throughout the paper.
\begin{axiom}
  \label{ax:dnpropresize}
  There is a small type \(\Omega_{\neg\neg} : \univ\) together with a map \(| \cdot | : \Omega_{\neg\neg} \to \univ\) such that 
  \begin{enumerate}
  \item \(|z|\) is a \(\neg\neg\)-stable proposition for all \(z : \Omega_{\neg \neg}\).
  \item Every \(\neg\neg\)-stable proposition is of the form \(| z |\) for a unique \(z : \Omega_{\neg \neg}\).
  \end{enumerate}
\end{axiom}

\begin{rmk}
  Readers familiar with approaches to synthetic computability theory based in CIC, such as \cite{forsterthesis}, may find it helpful to think of \(\Omega_{\neg\neg}\) as playing the same role that the impredicative universe of propositions, \(\mathbb{P}\) plays in CIC.
\end{rmk}

We will use the fact that \(\Omega_{\neg\neg}\) not only exists in cubical assemblies, but can be described explicitly as follows. First recall that assuming the law of excluded middle in our metatheory, we can describe the classifier for \(\neg\neg\)-stable propositions in assemblies as the uniform assembly on \(2\), \(U 2\). Write \(\Delta\) for the constant presheaves functor \(\asm \to \cubasm\).

\begin{thm}[{\cite[Example 4.5]{swandnsprop}}]
  \label{thm:deltpresdnclass}
  \(\Delta (U 2)\) is a classifier for \(\neg\neg\)-stable propositions in \(\cubasm\).
\end{thm}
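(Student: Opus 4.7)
The plan is to transport the classification statement from $\asm$ to $\cubasm$ along the constant-presheaves functor $\Delta\colon\asm\to\cubasm$. The starting point is the fact that, assuming classical logic in the metatheory, $U 2$ classifies $\neg\neg$-stable propositions in $\asm$: monos whose characteristic subobject is $\neg\neg$-stable correspond bijectively with morphisms into $U 2$, via pullback of the true point $1 \hookrightarrow U 2$. The strategy is to combine this with the excellent preservation properties of $\Delta$: it is computed pointwise, it has a left adjoint (colimit) and a right adjoint (evaluation at the terminal cube $[0]$, which is the limit), and consequently preserves all finite limits, all colimits, monomorphisms, and in particular the internal operation $\neg\neg$.

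First I would verify that $\Delta(U 2)$ carries a distinguished $\neg\neg$-stable sub\-object, namely $\Delta 1\hookrightarrow \Delta(U 2)$, obtained by applying $\Delta$ to the true point of $U 2$. Since $\Delta$ preserves monos and finite limits, this is a monomorphism whose pointwise characteristic at each cube $c$ is the original $U 2$-classifier in $\asm$, which is $\neg\neg$-stable; by naturality this assembles to a $\neg\neg$-stable proposition in the internal logic of $\cubasm$.

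Second, I would establish the universal property: for any cubical assembly $X$ and any $\neg\neg$-stable subobject $P\hookrightarrow X$, there is a unique map $X\to\Delta(U 2)$ classifying $P$. The key step here is to show that every $\neg\neg$-stable proposition in $\cubasm$ is essentially constant, i.e. isomorphic to $\Delta Q$ for a unique $\neg\neg$-stable $Q$ in $\asm$. Given this, the pointwise classifiers $X(c)\to U 2$ in $\asm$ assemble by naturality into the required map $X\to\Delta(U 2)$, with uniqueness inherited from uniqueness in $\asm$; conversely any map $X\to\Delta(U 2)$ recovers a $\neg\neg$-stable subobject by pullback of $\Delta 1\hookrightarrow\Delta(U 2)$.

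The main obstacle is precisely this constancy claim. I would approach it by showing that the $\neg\neg$ modality on $\cubasm$ factors through $\Delta$: heuristically, $\neg\neg$ destroys all nontrivial cubical structure, since for any cubical assembly $Y$ the presheaf $\neg\neg Y$ has all restriction maps equal to equivalences (they are maps between $\neg\neg$-stable propositions whose underlying sets of inhabitants coincide). Then any $\neg\neg$-stable proposition $P$ is a retract of $\neg\neg P$, and being itself propositional is forced to be essentially constant, hence in the image of $\Delta$. The delicate point to check carefully in this argument is that the fibrancy conditions defining the HoTT model of $\cubasm$ interact well with $\Delta$, so that a constant underlying presheaf genuinely corresponds to a type of the form $\Delta Q$ in the internal universe; this is where the hypothesis that $U 2$ actually classifies $\neg\neg$-stable propositions in $\asm$ (and not merely subobjects externally) gets used.
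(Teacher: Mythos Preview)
The paper does not prove this statement; it is quoted from \cite[Example 4.5]{swandnsprop} with no argument given here. There is therefore nothing in the present paper to compare your proposal against.

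On the substance of your sketch: transporting the classifier along $\Delta$ is the natural strategy, and you correctly identify the crux---one must show that $\neg\neg$-stable propositions in $\cubasm$ lie in the essential image of $\Delta$. However, your justification of this step is shaky. The claim that ``$\neg\neg Y$ has all restriction maps equal to equivalences'' tacitly assumes that $\neg\neg$ is computed levelwise in the presheaf topos, which is false: negation in presheaves uses the Kripke--Joyal semantics and is not simply $(\neg\neg Y)(c)=\neg\neg(Y(c))$. The constancy of subterminal objects in cubical assemblies comes instead from the structure of the cube category: every cube $[n]$ admits vertex maps $[0]\to[n]$, which give sections of the restriction maps and force $P([n])\cong P([0])$ for any subterminal $P$, quite independently of $\neg\neg$-stability.

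The more serious gap, which you flag yourself but do not close, is that the theorem concerns the HoTT model: $\neg\neg$-stable propositions here are fibrant $(-1)$-truncated types classified by a fibrant universe, not merely subterminal objects of the underlying $1$-topos. Relating the two requires control over how fibrancy and the universe construction interact with $\Delta$---and that is precisely the content of the cited paper rather than something your outline supplies.
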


Uemura showed in \cite{uemuracubasm} that the universe constructed therein does not satisfy propositional resizing. Moreover the author will show in a future paper that in fact there is no subobject classifier in the basic\footnote{It remains possible that there is a reflective subuniverse of cubical assemblies with a subobject classifier, although as yet none has been constructed.} cubical assemblies model. This suggests it is best to avoid assuming it for best compatibility with realizability models of HoTT.

Our first use of \(\Omega_{\neg\neg}\) is in allowing us to formulate \emph{Extended Church's thesis}. This is essentially the same formulation as given in \cite[Definition 6.1]{swandnsprop}, except that following the standard convention in synthetic computability theory, we avoid explicit reference to primitive recursive functions by postulating a function \(\varphi : \NN \times \NN \times \NN \to \NN + \{\bot\}\) where we write the value of the function as \(\varphi_e^k(n)\) for \(e, k, n \in \NN\), and then define \(\varphi_e : \NN \to \partial \NN\) by setting \(\varphi_e(n) \downarrow= m\) to be \(\sum_{k : \NN} (\varphi_e^k(n) = m)\).
\begin{axiom}
  Extended Church's thesis, \(\ect\) is the axiom asserting a function \(\varphi_e^k(n) : \NN + \bot\) for \(e, k, n \in \NN\) such that for \(e, n : \NN\) there is at most one \(k\) such that \(\varphi_e^k(n) \in \NN\), and
  \[
    \prod_{f : \NN \to \partial \NN} \left \| \sum_{e : \NN} \prod_{n : \NN} f(n) {\downarrow} \to \varphi_e(n) \downarrow= f(n) \right \|.
  \]
\end{axiom}

\begin{thm}[{\cite[Theorem 6.4]{swandnsprop}}]
  There is a reflective subuniverse of cubical assemblies where \(\ect\) holds. Hence \(\ect\) is consistent with HoTT.
\end{thm}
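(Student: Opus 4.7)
The plan is to transport the validity of Church's thesis from the ordinary assemblies model, where it is essentially tautologous, into a reflective subuniverse of $\cubasm$. The assemblies model $\asm$ over $\mathcal{K}_1$ is a model of extensional type theory in which every function $\NN \to \partial \NN$ is, by construction, tracked by some partial computable $\varphi_e$, so an ECT-like statement holds in $\asm$. The task is then to exhibit a modality $\bigcirc$ on $\cubasm$ whose reflective subuniverse is close enough to $\asm$ to preserve this property, while still being a model of HoTT.

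First, I would construct $\bigcirc$ as a nullification at a generator encoding the cubical interval structure, so that its modal types are ``discrete'' in the sense that all path-types of modal types are again modal and propositional on constant-presheaf data; a natural choice is to arrange that the essential image of $\Delta : \asm \to \cubasm$ lies in $\univ_\bigcirc$. By Theorem \ref{thm:deltpresdnclass}, $\Omegann \simeq \Delta(U2)$ is then modal, and since $\NN$ is already a constant presheaf on a decidable object it is modal too. Consequently $\partial \NN = \sum_{P : \Omegann} \NN^P$ lives in the modal subuniverse, and the modal type $\bigcirc(\NN \to \partial \NN)$ of functions between modal types agrees with the corresponding assemblies hom-object.

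Next, I would verify ECT inside the reflective subuniverse. Unfolding the axiom, one must show that for every $f : \NN \to \partial \NN$ (in the modal universe) there merely exists $e : \NN$ with $\varphi_e(n) \downarrow= f(n)$ whenever $f(n){\downarrow}$. Pulled back to $\asm$ via the reflection, $f$ becomes a morphism of assemblies, which by definition carries a computable realizer $e$; this $e$ is exactly what the statement needs. Because the conclusion is a $(-1)$-truncation, we only need the mere existence of such an $e$, so no coherence data has to be tracked through the reflection. The function $\varphi_e^k(n)$ itself is defined externally and is inherited unchanged from the global structure of $\NN$.

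The main obstacle is checking that the modality $\bigcirc$ is genuinely lex (or at least accessible and $\Sigma$-closed) enough for $\partial \NN$ to be computed inside $\univ_\bigcirc$ the same way it is computed in $\asm$, and that the universal property of the reflection relates hom-types in $\cubasm$ to morphisms of assemblies on the nose. In particular, one must ensure that the uniqueness clause ``for $e, n : \NN$ there is at most one $k$ with $\varphi_e^k(n) \in \NN$'' — which is about an external, decidable datum — remains a well-formed hypothesis in the internal language of the subuniverse, and that the $(-1)$-truncation appearing in the conclusion of ECT is preserved by the reflection. Once these coherence issues are dispatched, the consistency statement follows immediately, since any reflective subuniverse validating ECT is itself a model of HoTT in which ECT holds.
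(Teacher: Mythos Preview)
The paper does not prove this theorem; it is quoted verbatim from \cite[Theorem~6.4]{swandnsprop}. However, the method is visible in the closely analogous proof given later for computable choice (Section~\ref{sec:axiom-comp-choice}), and your proposal diverges from it in a way that leaves a genuine gap.

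Your plan is to nullify so that the modal types are the ``discrete'' objects, i.e.\ essentially the image of $\Delta : \asm \to \cubasm$, and then read ECT off from $\asm$. The step that fails is the claim that the function type $\NN \to \partial\NN$ computed in the subuniverse coincides with the $\asm$-hom. The paper recalls from \cite{uemuraswan} that Church's thesis is \emph{false} in $\cubasm$ even though $\NN$ and $\partial\NN$ already lie in the image of $\Delta$: exponentials out of $\NN$ in $\cubasm$ are strictly larger than in $\asm$, and passing to a reflective subuniverse containing the image of $\Delta$ does not shrink them. So ``the modal subuniverse is close enough to $\asm$'' is precisely the thing that is not automatic, and your argument gives no mechanism to enforce it. (Separately, a subuniverse literally equivalent to $\asm$ would be a $1$-category and would not yield the ``consistent with HoTT'' clause.)

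The technique actually used, as in the computable-choice proof, is targeted rather than global: one expresses ECT as the surjectivity of a single map $s : X \to Y$ in $\asm$, applies $\Delta$, and then builds a reflective subuniverse by nullifying at the homotopy fibres of $\Delta(s)$, invoking \cite[Section~6]{uemuraswan} (which applies to any family of discrete types in $\cubasm$). In that subuniverse $\Delta(s)$ is forced to be homotopy surjective. One then checks that $\Delta(X)$ agrees with the type-theoretic object because its definition uses only constructors preserved by $\Delta$, and handles the mismatch between the regular-logic existential in $Y$ and propositional truncation by factoring through an intermediate type $Y'$ and using that the projection $Y' \to \Omegann^{\NN\times\NN}$ is an embedding. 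The resulting subuniverse is not $\asm$; it retains the full higher structure while forcing exactly the one map needed.
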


\section{Oracle modalities in homotopy type theory}
\label{sec:oracle-modal-cubic}

\subsection{\(\neg\neg\)-Sheafification}
\label{sec:negn-sheaf-sets}

In order to do synthetic computability theory, we need to have some way to talk about non-computable functions that is compatible with Church's thesis. We will do this using \(\neg\neg\)-sheafification, motivated by the fact that \(\sets\) can be seen as the subtopos of \(\neg\neg\)-sheaves in the effective topos. We can understand \(\neg\neg\)-sheafification, \(\nabla\), intuitively as follows. Given a type \(X\), propositional truncation \(\| X \|\) freely makes \(X\) into a proposition by adding paths between points, homotopies between paths, etc. However, truncation does not remove any computational information. Given a global section of \(\|X\|\) we can find computably a global section of \(X\). Double negation both truncates and erases computational information: \(\neg \neg X\) is a proposition, and is interpreted as a uniform object in cubical assemblies. Finally, \(\neg\neg\)-sheafification erases computational information, but does not truncate: \(\nabla X\) is interpreted as a uniform object, but is not necessarily \(-1\) truncated.

As pointed out in \cite{rijkeshulmanspitters}, \(\neg\neg\)-sheafification is a special case of nullification, assuming propositional resizing. However, as explained in Section \ref{sec:cubical-assemblies-1} we wish to avoid using propositional resizing in this paper in favour of the weaker \(\neg\neg\)-resizing. Hence we use an alternative construction, based on sheafification in topos theory \cite[Section V.3]{moerdijkmaclane}, which preserves universe level while only requiring a classifier for \(\neg\neg\)-stable propositions. This also has the advantage that we can easily describe explicitly how the definition unfolds in cubical assemblies.
The approach here does have the disadvantage that it is not the true \(\neg\neg\)-sheafification, but only its \(0\)-truncation, which we indicate by writing the modality as \(\nabla_{0}\). However, this is sufficient for the uses of \(\neg\neg\)-sheafification in this paper, where it will only be applied to sets, and often just to \(\neg\neg\)-separated sets.

\begin{defn}
  We say a proposition \(P\) is \emph{\(\neg\neg\)-dense} if \(\neg\neg P\).
  
  We say a type \(X\) is a \emph{\(\neg\neg\)-sheaf} if for every \(\neg\neg\)-dense proposition, \(P\), the constant function map \(X \to X^P\) is an equivalence.
\end{defn}

\begin{defn}
  \label{defn:dnsheafification}
  Given a type \(X\), we define \(\nabla_0 X\) to consist of \(\neg\neg\)-stable subsets of \(X\), \(\xi : X \to \Omegann\), with the following properties:
  \begin{enumerate}
  \item \(\xi\) is not empty: \(\neg \neg \sum_{x : X} \xi(x)\)
  \item \(\xi\) has at most one element up to double negation, \(\prod_{x, y} \xi(x) \to \xi(y) \to \neg \neg x = y\)
  \end{enumerate}
\end{defn}

\begin{rmk}
  We can also think of the elements of \(\nabla_0 X\) as partial elements of \(X\), as in Definition~\ref{def:partialelt}, by taking the domain of \(\xi : X \to \Omegann\) to be \(\sum_{x : X} \xi(x)\), with the map \(\sum_{x : X} \xi(x) \to X\) simply given by projection. In particular we can use the same notation for partial elements as usual. In general \(\xi {\downarrow}\) does not need to be a proposition. However, we will typically restrict to the case where \(X\) is a \(\neg\neg\)-separated set, and this precisely ensures that \(\xi {\downarrow}\) is in fact a proposition. We can roughly think of \(\nabla_{0} X\) as the type of partial elements whose domain is a \(\neg \neg\)-dense proposition. Compare this to \(\partial X\) whose elements have \(\neg\neg\)-stable domain.
\end{rmk}

\begin{thm}
  \(\nabla_0 : \univ \to \univ\) is a modality. It reflects onto \(\neg\neg\)-sheaves that are \(\neg\neg\)-separated.
\end{thm}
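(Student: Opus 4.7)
The plan is to verify the three ingredients of a higher modality by first identifying the \(\nabla_0\)-modal types and then reading off the elimination rule from the resulting \(\Sigma\)-closed reflective subuniverse structure. The unit \(\eta_X : X \to \nabla_0 X\) is defined by \(\eta_X(x)(y) := \neg\neg(x = y)\); its value at each \(y\) is automatically a \(\neg\neg\)-stable proposition, condition (1) of Definition~\ref{defn:dnsheafification} holds by reflexivity, and condition (2) holds by transitivity of equality combined with double negation.

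The core step is to prove that \(Y\) is \(\nabla_0\)-modal if and only if \(Y\) is a \(\neg\neg\)-separated \(\neg\neg\)-sheaf. For the forward direction, I would observe that \(\nabla_0 Y\) is a subtype of \(Y \to \Omegann\), cut out by \(\neg\neg\)-stable propositions; since \(\Omegann\) is a set which is itself a \(\neg\neg\)-sheaf with \(\neg\neg\)-stable identity types, function types into it share these properties and hence so does \(\nabla_0 Y\), and any modal \(Y\) inherits them via \(\eta_Y\). The harder converse proceeds as follows: given a \(\neg\neg\)-separated \(\neg\neg\)-sheaf \(Y\) and \(\xi \in \nabla_0 Y\), consider \(P_\xi := \sum_{y : Y} \xi(y)\). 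I claim \(P_\xi\) is a \(\neg\neg\)-dense proposition: it is \(\neg\neg\)-inhabited by condition (1), and any two elements \((y, p), (y', p')\) satisfy \(\neg\neg(y = y')\) by condition (2), hence \(y = y'\) using \(\neg\neg\)-separation of \(Y\), and then \(p = p'\) because \(\xi(y)\) is already a proposition. Applying the \(\neg\neg\)-sheaf property to the first projection \(P_\xi \to Y\) yields a unique \(y_\xi : Y\) extending it, and this defines the inverse of \(\eta_Y\).

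With the characterisation in hand, I would verify that \(\nabla_0 X\) itself is a \(\neg\neg\)-separated \(\neg\neg\)-sheaf by the same pointwise argument, which by the characterisation makes it modal. The elimination rule then follows in the standard way for reflective subuniverses: given \(P : \nabla_0 X \to \univ\) and \(f : \prod_{x : X} \nabla_0 P(\eta_X(x))\), one applies the universal property of \(\eta_X\) to the map \(x \mapsto (\eta_X(x), f(x)) : X \to \sum_{z : \nabla_0 X} \nabla_0 P(z)\), uses \(\Sigma\)-closure of modal types to extend to \(\nabla_0 X\), and projects; the computation rule holds by construction up to the \(\beta\) for \(\eta_X\).

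The main obstacle is the converse direction of the characterisation, and specifically checking that the extension \(y_\xi\) provided by the sheaf property actually satisfies \(\eta_Y(y_\xi) = \xi\) on the nose rather than merely yielding an element with \(\xi(y_\xi)\). This reduces by function extensionality and biimplication of \(\neg\neg\)-stable propositions to showing, for each \(y : Y\), that \(\neg\neg(y_\xi = y)\) is logically equivalent to \(\xi(y)\); one direction uses condition (2) of Definition~\ref{defn:dnsheafification} against any witness in \(P_\xi\), while the other uses that the sheaf extension necessarily agrees pointwise with the first projection from \(P_\xi\) and the earlier observation that \(P_\xi\) is a proposition.
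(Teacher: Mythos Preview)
Your approach is close in spirit to the paper's, and most of the ingredients you assemble are the right ones, but there is a genuine gap in the passage from ``\(Y\) is \(\nabla_0\)-modal iff \(Y\) is a \(\neg\neg\)-separated \(\neg\neg\)-sheaf'' to the elimination rule. In your final step you invoke ``the universal property of \(\eta_X\)'' to extend the map \(x \mapsto (\eta_X(x), f(x))\) along \(\eta_X\), but nothing you have proved establishes that universal property for an \emph{arbitrary} source \(X\). Your converse direction only shows that \(\eta_Y\) is an equivalence when \(Y\) is already a \(\neg\neg\)-separated sheaf; this is a form of idempotency, not the reflection property. Knowing that \(\nabla_0\) lands in the modal types and that \(\eta\) is an equivalence there does not by itself force maps \(X \to Z\) with \(Z\) modal to extend uniquely along \(\eta_X\): for that you would also need functoriality of \(\nabla_0\) and naturality of \(\eta\) (so that \(\nabla_0\) is a pointed endofunctor and hence an idempotent monad), and you have supplied neither.

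Concretely, the obstruction is in your use of \(P_\xi = \sum_{y:Y}\xi(y)\): you need \(\neg\neg\)-separation of \(Y\) to conclude that \(P_\xi\) is a proposition before the sheaf condition can be invoked. For the genuine universal property the source is a general \(X\), and \(\sum_{x:X}\xi(x)\) is typically not a proposition. The paper handles exactly this point when proving the reflection property: given \(g : X \to Y\) with \(Y\) a \(\neg\neg\)-separated sheaf and \(\xi \in \nabla_0 X\), it observes that the restriction of \(g\) to \(\sum_{x:X}\xi(x)\) is constant (using \(\neg\neg\)-separation of the target \(Y\)), and since any \(\neg\neg\)-separated type is a set this constant map factors through the propositional truncation \(\|\sum_{x:X}\xi(x)\|\), which \emph{is} a \(\neg\neg\)-dense proposition, so the sheaf property of \(Y\) now applies and yields the unique extension. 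Inserting this ``factor through truncation because the target is a set'' step into your argument would close the gap and bring your proof in line with the paper's.
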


\begin{proof}
  First to show that each \(\nabla_0 X\) is \(\neg\neg\)-separated, observe that \(\nabla_0 X\) embeds into an exponential of \(\Omega_{\neg\neg}\), which is clearly \(\neg\neg\)-separated.
  
  We now verify that \(\nabla_0 X\) is a \(\neg\neg\)-sheaf for all \(X\). Suppose we are given \(f : P \to \nabla X\) where \(P\) is a \(\neg\neg\)-dense proposition. We can define an element of \(\Omega_{\neg\neg}^X\) by simply sending \(x : X\) to \(\prod_{z : P} f(z) \downarrow= x\) which is a product of \(\neg\neg\)-stable propositions, and so \(\neg\neg\)-stable. To check \(\neg\neg \sum_{x : X} \prod_{z : P} f(z) \downarrow= x\), observe that we can derive \(\sum_{x : X} \prod_{z : P} f(z) \downarrow= x\) from \(z : P\) together with a proof of \(\sum_{x : X} f(z) \downarrow= x\) and that both of these are \(\neg\neg\)-dense. To show there is at most one element up to double negation, similarly observe that we can derive that two elements are equal from \(\neg\neg\)-connected propositions. Finally we need to verify that the element we have given is unique, i.e. given \(\zeta : \nabla_0 X\) such that \(f z = \zeta\) for \(z : P\), we need to show \(\zeta = \prod_{z : P} f(z) \downarrow= x\). By applying \(\neg\neg\)-separatedness that we showed above, it suffices to derive the double negation of equality, which we can do using \(\neg\neg\)-dense propositions, as before.

  Finally we need to show that \(\nabla_0\) gives a reflective subuniverse in the sense of \cite[Definition 1.3]{rijkeshulmanspitters}. That is, given a map \(g : X \to Y\) where \(Y\) is a \(\neg\neg\)-separated \(\neg\neg\)-sheaf, we need to show that \(g\) extends uniquely along the unit \(\eta : X \to \nabla_0 X\). First note that any \(\neg \neg\)-separated type is a set by \cite[Corollary 7.2.3]{hottbook}, so we can also assume that \(Y\) is a set. Now any element of \(\nabla_{0} X\) is of the form \(\xi : X \to \Omega_{\neg\neg}\). Given \(x, x' : X\) such that \(\xi(x)\) and \(\xi(x')\), we have \(\neg \neg x = x'\), and so also \(\neg \neg g(x) = g(x')\). Since \(Y\) is \(\neg \neg\)-separated, we deduce that for such \(x, x'\) we have \(g(x) = g(x')\). Hence the function \(\sum_{x : X} \xi(x) \to Y\) given by restricting \(g\) is constant. Since \(Y\) is a set it follows that the map factors through the truncation \(\| \sum_{x : X} \xi(x) \to Y \|\). However, \(\| \sum_{x : X} \xi(x) \to Y \|\) is a \(\neg \neg\)-dense proposition, and so it extends to a unique element of \(Y\), from the assumption that \(Y\) is a \(\neg \neg\)-sheaf. By applying this to each fibre of the unit map \(\eta : X \to \nabla_{0} X\), we see that we can extend \(g\) uniquely to \(X\), as required.
\end{proof}

\begin{thm}
  A \(\neg\neg\)-sheaf is \(0\)-truncated precisely if it is \(\neg\neg\)-separated. Hence \(\nabla_0\) reflects onto \(\neg\neg\)-sheaves that are \(0\)-truncated.
\end{thm}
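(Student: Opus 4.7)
The plan is to prove the iff directly in both directions, then deduce the second sentence from the previous theorem.

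For \(\neg\neg\)-separated \(\Rightarrow\) \(0\)-truncated, I would simply invoke Hedberg's theorem (\cite[Corollary 7.2.3]{hottbook}), exactly as used in the proof of the previous theorem: any type whose identity types are \(\neg\neg\)-stable is a set. This direction does not use the \(\neg\neg\)-sheaf hypothesis at all.

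For the converse, suppose \(X\) is a \(\neg\neg\)-sheaf that is \(0\)-truncated, and fix \(x, y : X\) with \(\neg\neg(x = y)\); the task is to derive \(x = y\). Since \(X\) is a set, the type \(P := (x = y)\) is itself a proposition, and by assumption it is \(\neg\neg\)-dense. The \(\neg\neg\)-sheaf property then forces the constant-function map \(X \to X^P\) to be an equivalence, in particular injective. Now compare the two constant maps \(\lambda p.\,x\) and \(\lambda p.\,y\) from \(P\) to \(X\): given any \(p : P\), the element \(p\) is itself a path \(x = y\), so the two functions agree pointwise, hence are equal by function extensionality. Injectivity of the constant-function map then yields \(x = y\).

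The final sentence is immediate: the previous theorem shows \(\nabla_0\) reflects onto \(\neg\neg\)-sheaves that are \(\neg\neg\)-separated, and by the equivalence just established these are precisely the \(\neg\neg\)-sheaves that are \(0\)-truncated.

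I do not anticipate a serious obstacle. The only subtlety worth flagging is the choice of \(\neg\neg\)-dense proposition: one must take \(P\) to be \((x = y)\) itself, using \(0\)-truncation of \(X\) to see it is a proposition, rather than, say, \(\neg\neg(x = y)\), since only in the former case does a proof of \(P\) directly supply the path needed to identify the two constant functions.
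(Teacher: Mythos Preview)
Your proof is correct and follows essentially the same idea as the paper: both directions pick \(P := (x = y)\) as the \(\neg\neg\)-dense proposition, using \(0\)-truncation to ensure it is a proposition. The only minor variation is that the paper applies the sheaf condition to the identity type \(x = y\) itself (using that identity types of \(\neg\neg\)-sheaves are \(\neg\neg\)-sheaves, so a \(\neg\neg\)-dense proposition that is a sheaf must be inhabited), whereas you apply it directly to \(X\) and argue via injectivity of the constant map; both routes are equally short and valid.
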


\begin{proof}
  As we already observed, any \(\neg\neg\)-separated type is automatically a set.

  For the converse, assume that \(X\) is a \(\neg \neg\)-sheaf and a set, and suppose we have \(x, x'\) such that \(\neg \neg x = x'\). Since \(X\) is a \(\neg \neg\)-sheaf, so is the identity type \(x = x'\). However, we can now see that \(x = x'\) is a proposition, a \(\neg \neg\)-sheaf and \(\neg \neg\)-dense. It follows directly that \(x = x'\) is inhabited. That is, we have deduced \(x = x'\) from \(\neg \neg x = x'\).
\end{proof}

We can define elements of \(\nabla_0 X\) by splitting into cases, the same way we would in classical logic, as we show in the following lemma.
\begin{lemma}
  \label{lem:defbycases}
  Given a proposition \(P\), and elements \(x, y\) of a type \(X\), we can define \(\xi : \nabla_0 X\) such that \(P\) implies \(\xi \downarrow= x\) and \(\neg P\) implies \(\xi \downarrow= y\).
\end{lemma}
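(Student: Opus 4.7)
The plan is to construct an explicit \(\xi : X \to \Omegann\) encoding a \(\neg\neg\)-stable version of case analysis on \(P\). Specifically, I would set
\[
  \xi(z) \;:=\; \bigl(P \to \neg\neg(z = x)\bigr) \wedge \bigl(\neg P \to \neg\neg(z = y)\bigr).
\]
Each conjunct is an implication from a proposition into a \(\neg\neg\)-stable proposition, hence itself a \(\neg\neg\)-stable proposition (one easily checks that if \(Q\) is \(\neg\neg\)-stable then so is \(P' \to Q\) for any proposition \(P'\)). Since \(\neg\neg\)-stability is preserved under conjunction, \(\xi(z)\) genuinely lives in \(\Omegann\).

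Verifying the two clauses of Definition~\ref{defn:dnsheafification} for this \(\xi\) reduces in both cases to the double negation of excluded middle, \(\neg\neg(P \vee \neg P)\), which is always available. For non-emptiness, if \(P\) holds then \(\xi(x)\) holds (the first conjunct follows from \(\neg\neg(x=x)\), and the second is vacuous as its hypothesis contradicts \(P\)), and symmetrically \(\neg P\) yields \(\xi(y)\); either case produces an element of \(\sum_{z:X}\xi(z)\), so its double negation is immediate. For uniqueness up to double negation, given \(\xi(z)\) and \(\xi(w)\), the same case split on \(P\) gives either \(\neg\neg(z=x) \wedge \neg\neg(w=x)\) and hence \(\neg\neg(z=w)\), or the symmetric conclusion via \(y\).

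The two required specifications \(P \Rightarrow \xi \downarrow= x\) and \(\neg P \Rightarrow \xi \downarrow= y\) then fall out of the same computation: take the witness to be \(x\) (respectively \(y\)) together with the reflexivity path, and check \(\xi(x)\) (respectively \(\xi(y)\)) as above. The only subtlety worth flagging is that I deliberately wrap each equality in \(\neg\neg\) rather than using a more obvious disjunctive formula such as \((P \wedge z = x) \vee (\neg P \wedge z = y)\); the latter would fail to be \(\neg\neg\)-stable in general, and in any case \(z = x\) need not be a proposition when \(X\) is not assumed to be a set, so wrapping it in a double negation is what allows the formula to land in \(\Omegann\) without any hypothesis on \(X\).
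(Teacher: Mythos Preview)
Your proof is correct but takes a genuinely different route from the paper's. The paper argues abstractly: it defines a map \(P + \neg P \to \nabla_0 X\) sending \(\inl(\ast) \mapsto \eta(x)\) and \(\inr(\ast) \mapsto \eta(y)\), observes that \(P + \neg P\) is a \(\neg\neg\)-dense proposition, and then invokes the already-established fact that \(\nabla_0 X\) is a \(\neg\neg\)-sheaf to extend this map uniquely to a point of \(\nabla_0 X\). By contrast, you work directly with the concrete description of \(\nabla_0 X\) as certain maps \(X \to \Omegann\), writing down an explicit formula and verifying the two clauses of Definition~\ref{defn:dnsheafification} by hand. The paper's argument is shorter and highlights the conceptual reason the lemma holds (it is just an instance of the sheaf property), while your approach is self-contained, does not depend on the preceding theorem that \(\nabla_0\) reflects onto \(\neg\neg\)-sheaves, and makes the resulting element completely explicit---which is arguably closer in spirit to how one would unwind the sheaf extension anyway. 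Your remark about needing \(\neg\neg\) around the equalities so that the formula lands in \(\Omegann\) without any set-level assumption on \(X\) is well observed.
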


\begin{proof}
  This follows formally from the fact that \(\nabla_{0}\) is a \(\neg\neg\)-sheaf. Namely, we define a map \(P + \neg P \to \nabla_{0} X\) sending \(\inl(\ast)\) to \(\eta(x)\) and \(\inr(\ast)\) to \(\eta(y)\). Since \(P + \neg P\) is a \(\neg \neg\)-dense proposition, this map extends uniquely to an element of \(\nabla_{0} X\).
\end{proof}

We will write elements of \(\nabla_0 X\) constructed using Lemma~\ref{lem:defbycases} using the usual notation for definition by cases, i.e. as follows.
\[
  \xi :=
  \begin{cases}
    x & P \\
    y & \neg P
  \end{cases}
\]

In particular Lemma~\ref{lem:defbycases} gives us one direction of the equivalence in the theorem below.
\begin{thm}
  \label{thm:twosoclassifier}
  The types \(\nabla_0 2\) and \(\Omega_{\neg \neg}\) are equivalent. Hence there is an equivalence between maps \(A \to \nabla_0 2\) and \(\neg\neg\)-stable subtypes of \(A\).
\end{thm}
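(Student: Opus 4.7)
The plan is to construct an explicit equivalence between $\nabla_0 2$ and $\Omega_{\neg\neg}$ by sending an element $\xi : \nabla_0 2$ to $\xi(0) : \Omega_{\neg\neg}$ in one direction, and using Lemma~\ref{lem:defbycases} in the other direction to send $P : \Omega_{\neg\neg}$ to the element $\xi_P$ of $\nabla_0 2$ defined by cases: $\xi_P \downarrow = 0$ if $P$, and $\xi_P \downarrow = 1$ if $\neg P$. Unfolding the definition, this means $\xi_P(0) = P$ and $\xi_P(1) = \neg P$, so one round trip, $P \mapsto \xi_P \mapsto \xi_P(0) = P$, is immediate.

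For the other round trip, starting with $\xi : \nabla_0 2$ and passing to $\xi_{\xi(0)}$, I need to show $\xi(1) = \neg \xi(0)$ as elements of $\Omega_{\neg\neg}$. Since both sides are propositions it suffices to prove logical equivalence. From $\xi(1)$ one gets $\neg\xi(0)$ as follows: if also $\xi(0)$, then condition~(2) of Definition~\ref{defn:dnsheafification} gives $\neg\neg(0 = 1)$, and since equality on $2$ is decidable this yields $0 = 1$, a contradiction. Conversely, assuming $\neg \xi(0)$, I use that $\xi(1)$ is $\neg\neg$-stable, so it suffices to prove $\neg\neg\xi(1)$. Condition~(1) gives $\neg\neg \sum_{x:2}\xi(x)$, which together with $\neg\xi(0)$ yields $\neg\neg\xi(1)$ by propositional reasoning. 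Equality of the resulting elements of $\nabla_0 2$ then follows because $\nabla_0 2$ embeds into $\Omega_{\neg\neg}^2$, which is a set.

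The main obstacle is really just unpacking Definition~\ref{defn:dnsheafification} carefully, and using the decidability of equality on $2$ to strengthen the $\neg\neg$-based conditions into the sharp form $\xi(1) \leftrightarrow \neg\xi(0)$. No genuinely hard step appears.

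For the second half of the statement, applying the equivalence $\nabla_0 2 \simeq \Omega_{\neg\neg}$ pointwise yields an equivalence of function types $(A \to \nabla_0 2) \simeq (A \to \Omega_{\neg\neg})$, and the codomain is by definition (via Axiom~\ref{ax:dnpropresize}) the type of $\neg\neg$-stable subtypes of $A$.
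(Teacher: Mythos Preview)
Your proof is correct and follows the same overall strategy as the paper's: build explicit inverse maps between \(\nabla_0 2\) and \(\Omega_{\neg\neg}\) and check the round trips. The only real difference is presentational. The paper defines the map \(\nabla_0 2 \to \Omega_{\neg\neg}\) abstractly as \(x \mapsto (x = \eta(1))\), appealing to \(\neg\neg\)-separatedness of \(\nabla_0 2\) to see that this is a \(\neg\neg\)-stable proposition, and then simply declares the verification of the inverse laws ``straightforward''. You instead unfold Definition~\ref{defn:dnsheafification} and extract the component \(\xi(0)\) directly; up to swapping the roles of \(0\) and \(1\) these maps coincide, since \(x = \eta(1)\) unwinds to \(\xi(1)\), which you show is \(\neg\xi(0)\). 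Your explicit treatment of the round trips, using conditions~(1) and~(2) together with decidability of equality on \(2\), spells out exactly what the paper leaves implicit. One small point: the claim that ``unfolding the definition'' of \(\xi_P\) yields \(\xi_P(0)=P\) on the nose relies either on tracing through the sheaf-extension construction in the proof of Lemma~\ref{lem:defbycases}, or on the short argument (which you in effect give for the other round trip) that Lemma~\ref{lem:defbycases}'s implications upgrade to biconditionals via conditions~(1) and~(2); either way it is easily justified.
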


\begin{proof}
  We explicitly describe the isomorphism as follows. Given \(p : \Omega_{\neg \neg}\), we map \(p\) to \(1\) if \(p\) is true and to \(1\) otherwise. We define the map \(\nabla_{0} 2 \to \Omega_{\neg\neg}\) by sending \(x : \nabla_{0} 2\) to the proposition \(x = \eta(1)\), which is \(\neg \neg\)-stable since \(\nabla_{0} 2\) is \(\neg \neg\)-separated. It is straightforward to check these two maps give an isomorphism.
\end{proof}

\subsection{Turing reducibility for modalities}
We recall from \cite{rijkeshulmanspitters} that there is a canonical way to order modalities.

\begin{prop}
  Suppose we are given modalities \(\bigcirc\) and \(\Diamond\). The following are equivalent.
  \begin{enumerate}
  \item If \(X\) is \(\bigcirc\)-connected, then it is \(\Diamond\)-connected.
  \item If \(X\) is \(\Diamond\)-modal, then it is \(\bigcirc\)-modal.
  \end{enumerate}
\end{prop}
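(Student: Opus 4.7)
The plan is to prove the two implications separately, using the universal properties and extension properties that characterise modalities.

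For (2) $\Rightarrow$ (1), I would use the universal property of reflection. Suppose every $\Diamond$-modal type is $\bigcirc$-modal, and let $X$ be $\bigcirc$-connected, so $\bigcirc X$ is contractible. This means the constant map $X \to 1$ exhibits $1$ as the $\bigcirc$-reflection of $X$: any map from $X$ to a $\bigcirc$-modal type factors uniquely through $1$. By the hypothesis, every $\Diamond$-modal type is $\bigcirc$-modal, so any map from $X$ to a $\Diamond$-modal type also factors uniquely through $1$. Hence $1$ satisfies the universal property of $\Diamond X$, giving $\Diamond X \simeq 1$, i.e.\ $X$ is $\Diamond$-connected.

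For (1) $\Rightarrow$ (2), I would use the extension property of modal types along connected maps. Suppose every $\bigcirc$-connected type is $\Diamond$-connected, and let $X$ be $\Diamond$-modal. The unit $\eta : X \to \bigcirc X$ is $\bigcirc$-connected as a map (a standard fact from \cite{rijkeshulmanspitters}), hence $\Diamond$-connected by (1). Applying the extension property for $\Diamond$ with the constant family $\lambda z.\, X$ (which is pointwise $\Diamond$-modal), the identity $\mathrm{id}_X$ extends uniquely to a map $r : \bigcirc X \to X$ with $r \circ \eta = \mathrm{id}_X$. To see that $\eta$ is actually an equivalence, note that $\eta \circ r : \bigcirc X \to \bigcirc X$ satisfies $(\eta \circ r) \circ \eta = \eta = \mathrm{id}_{\bigcirc X} \circ \eta$; by the uniqueness part of the extension property for $\bigcirc$ applied to the $\bigcirc$-modal type $\bigcirc X$ and the $\bigcirc$-connected map $\eta$, we conclude $\eta \circ r = \mathrm{id}_{\bigcirc X}$. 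Thus $\eta$ is an equivalence and $X$ is $\bigcirc$-modal.

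The only real technical point is the appeal to the extension property for a modality along a connected \emph{map} (not just a connected type); this is standard material from \cite{rijkeshulmanspitters}. The passage from the hypothesis on connected types in (1) to its map-level analogue is immediate, since ``$\Diamond$-connected map'' is defined fibrewise. Both directions are then short formal manipulations, and I expect no real obstacles beyond correctly invoking the right form of the extension property.
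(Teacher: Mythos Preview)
Your argument is correct. Both directions are standard manipulations with reflective subuniverses, and the lemmas you invoke (that the unit of a modality is a connected map, and that modal types enjoy unique extension along connected maps) are exactly the ones needed.

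The paper, however, does not argue this way. Its proof is a single sentence: it observes that the equivalence is a formal consequence of the Galois connection between classes of types, citing \cite[Theorem~3.25]{rijkeshulmanspitters}. In that framework, for any modality the $\bigcirc$-connected types and the $\bigcirc$-modal types are each other's orthogonal complements (in the appropriate sense of lifting/extension), and it is a general fact about Galois connections that one side is contained in another if and only if the opposite containment holds on the dual side. So the paper offloads the entire content to that abstract statement.

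Your approach unpacks precisely the portion of that Galois-connection machinery needed for the result: the $(2)\Rightarrow(1)$ direction uses uniqueness of reflections, and the $(1)\Rightarrow(2)$ direction uses the extension property along connected maps to build an explicit inverse to the unit. This is more elementary and self-contained, at the cost of being longer; the paper's version is shorter but presupposes familiarity with the orthogonality/Galois picture from \cite{rijkeshulmanspitters}. Either is a perfectly good proof.
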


\begin{proof}
  This follows formally from the theory of Galois connections, as in the proof of \cite[Theorem 3.25]{rijkeshulmanspitters}.
\end{proof}

\begin{defn}
  For modalities \(\bigcirc\) and \(\Diamond\), we write \(\bigcirc \leq_{T} \Diamond\) to mean that every \(\bigcirc\)-connected type is \(\Diamond\)-connected.
\end{defn}

It is clear by definition that this defines a partial order on the class of modalities, with antisymmetry following from Theorem~1.53 and Lemma~1.46 of \cite{rijkeshulmanspitters}.
\begin{rmk}
  We chose the ordering to agree with the usual ordering of Turing reducibility, which is the opposite to the one considered in \cite{rijkeshulmanspitters}.
\end{rmk}

\begin{prop}
  \label{prop:nullmodalityordering}
  Suppose that \(\bigcirc\) and \(\Diamond\) are modalities, and that \(\bigcirc\) is the nullification of a family of types \(B : A \to \univ\). Then \(\bigcirc \leq_{T} \Diamond\) iff for all \(a : A\), \(B(a)\) is \(\Diamond\)-connected.
\end{prop}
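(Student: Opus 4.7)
The plan is to reduce both directions to the characterisation of $\bigcirc$-connectedness via null types that is standard in the Rijke--Shulman--Spitters framework. Specifically, I will use the fact that for any modality $\bigcirc$, a type $B$ is $\bigcirc$-connected if and only if the constant function map $X \to X^{B}$ is an equivalence for every $\bigcirc$-modal type $X$ (this is a direct consequence of the universal property of $\bigcirc$-connected maps against $\bigcirc$-modal types, cf.\ the relevant lemma in \cite{rijkeshulmanspitters}). Combined with the previous proposition, which lets me switch freely between ``every $\bigcirc$-connected type is $\Diamond$-connected'' and ``every $\Diamond$-modal type is $\bigcirc$-modal,'' the argument becomes formal.

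For the forward direction, I would argue as follows. Since $\bigcirc$ is the nullification of $(A, B)$, every $\bigcirc$-modal type $X$ is $B$-null by definition of nullification, so in particular for each $a : A$ the constant map $X \to X^{B(a)}$ is an equivalence for every $\bigcirc$-modal $X$. By the characterisation above, this exactly says $B(a)$ is $\bigcirc$-connected. Now invoking the hypothesis $\bigcirc \leq_T \Diamond$, we conclude that $B(a)$ is $\Diamond$-connected for every $a : A$.

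For the backward direction, assume each $B(a)$ is $\Diamond$-connected. Using the equivalent formulation of $\leq_T$ from the preceding proposition, it suffices to show that every $\Diamond$-modal type $X$ is $\bigcirc$-modal. By the characterisation of $\bigcirc$ as the nullification of $(A, B)$, this amounts to proving that $X$ is $B$-null, i.e.\ that $X \to X^{B(a)}$ is an equivalence for every $a : A$. But since $B(a)$ is $\Diamond$-connected and $X$ is $\Diamond$-modal, this is immediate from the same characterisation of connectedness applied to $\Diamond$.

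There is essentially no obstacle beyond citing the right lemma: the proof is a routine two-line application of the Galois correspondence between modalities and the unit axioms of nullification. The only minor care needed is to state the equivalence between ``$B$ is $\bigcirc$-connected'' and ``every $\bigcirc$-modal type is $B$-null'' in sufficient generality (for arbitrary, not necessarily propositional, $B$), which is already covered by the general theory in \cite{rijkeshulmanspitters}.
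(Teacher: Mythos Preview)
Your proposal is correct and follows essentially the same approach as the paper's proof: both directions hinge on the equivalence between ``$B(a)$ is $\Diamond$-connected'' and ``every $\Diamond$-modal type is $B(a)$-null'' (which is \cite[Corollary~1.37]{rijkeshulmanspitters}), combined with the Galois-connection characterisation of $\leq_T$ from the preceding proposition. The only cosmetic difference is that the paper cites the specific corollary number while you describe the lemma in words.
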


\begin{proof}
  First suppose that \(\bigcirc \leq_{T} \Diamond\). Note that for all \(a : A\), \(B(a)\) is \(\bigcirc\)-connected, by the definition of null types and \cite[Corollary 1.37]{rijkeshulmanspitters}. Hence each \(B(a)\) is also \(\Diamond\)-connected, by assumption.

  Now suppose that every \(B(a)\) is \(\Diamond\)-connected. It follows, again by \cite[Corollary 1.37]{rijkeshulmanspitters} that every \(\Diamond\)-modal type is \(B(a)\)-null for all \(a : A\), and thereby \(\bigcirc\)-modal, and so \(\bigcirc \leq_{T} \Diamond\).
\end{proof}

\subsection{Oracles and oracle modalities}
\label{sec:oracles}

Using \(\nabla_0\) we can now give our definitions of oracle and oracle modality.
\begin{defn}
  An \emph{oracle} is a map \(A \to \nabla_0 B\) for \(A, B : \univ\).
\end{defn}

The intuitive idea is that we can think of a function \(\chi : A \to \nabla_0 B\) as an external function \(A \to B\) in \(\sets\) that we are not able to compute. We are able to define non constructive functions by Lemma~\ref{lem:defbycases}, and can reason about the functions non constructively, as long as we stay inside the world of \(\neg\neg\)-sheaves. Note that by Theorem~\ref{thm:twosoclassifier} we can view any \(\neg\neg\)-stable subset of a type \(A\) as an oracle \(A \to \nabla_0 2\).

Also note that by the general properties of modalities, we have the following proposition.
\begin{prop}
  For \(A, B : \univ\) there is an equivalence between the type of oracles \(A \to \nabla_0 B\) and functions \(\nabla_0 A \to \nabla_0 B\).
\end{prop}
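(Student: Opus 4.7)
The plan is to derive this as a direct instance of the universal property of a reflective subuniverse, applied to the modality $\nabla_0$ established in the previous theorem. Namely, since $\nabla_0 B$ is itself $\nabla_0$-modal (indeed it is the reflection of $B$ into the subuniverse of $\neg\neg$-separated $\neg\neg$-sheaves), precomposition with the unit $\eta_A : A \to \nabla_0 A$ should give the desired equivalence
\[
(-) \circ \eta_A : (\nabla_0 A \to \nabla_0 B) \;\longrightarrow\; (A \to \nabla_0 B).
\]

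More concretely, I would proceed as follows. First I would observe that for any $\nabla_0$-modal type $Y$ and any type $X$, the universal property of the modality gives, for each $f : X \to Y$, a unique extension $\tilde f : \nabla_0 X \to Y$ with $\tilde f \circ \eta_X = f$; this is exactly the content of the uniqueness clause in the proof that $\nabla_0$ defines a reflective subuniverse. Applying this with $X := A$ and $Y := \nabla_0 B$ provides an inverse $f \mapsto \tilde f$ to the precomposition map above. Then $(\tilde f) \circ \eta_A = f$ holds by construction, while the other round-trip $\widetilde{g \circ \eta_A} = g$ follows because both sides are extensions of $g \circ \eta_A$ to $\nabla_0 A \to \nabla_0 B$ and the extension is unique.

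There is essentially no obstacle here beyond invoking the right universal property; the only thing to keep in mind is that the reflection is characterised by uniqueness of extensions (not merely existence), which is precisely what was established in the proof that $\nabla_0$ is a modality. For the very pedantic, one could instead package this as the chain of equivalences
\[
(\nabla_0 A \to \nabla_0 B) \;\simeq\; (A \to \nabla_0 B)
\]
obtained from the general fact \cite[Theorem 1.16]{rijkeshulmanspitters} that for a reflective subuniverse with reflector $\bigcirc$ and unit $\eta$, precomposition with $\eta_A$ is an equivalence whenever the codomain is $\bigcirc$-modal; applying this with $\bigcirc = \nabla_0$ and noting that $\nabla_0 B$ is $\nabla_0$-modal immediately yields the proposition.
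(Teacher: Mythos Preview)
Your proposal is correct and matches the paper's approach exactly: the paper does not spell out a proof but simply remarks that the proposition follows from the general properties of modalities, which is precisely the universal property of the reflective subuniverse that you invoke.
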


\begin{defn}
  Let \(\chi : A \to \nabla_0 B\) be an oracle. The \emph{oracle modality} associated to \(\chi\) is the nullification of the family of types \(a : A \vdash \chi (a) {\downarrow}\). We will write the modality as \(\bigcirc_\chi\).
\end{defn}

We think of elements of the type \(\bigcirc_\chi X\) as elements of \(X\) that can be computed using queries to the oracle \(\chi\). We have an inclusion function \(\eta : X \to \bigcirc_\chi X\) giving those elements of \(X\) that can be computed without any oracle queries. However, since \(\bigcirc_\chi X\) is null, we also know that any function \(\chi(a) {\downarrow} \to \bigcirc_\chi X\) extends uniquely to an element of \(\bigcirc_\chi X\). For example, we can in particular find for each \(a : A\) an element of \(\bigcirc_\chi (\chi(a) {\downarrow})\).

Based on this intuition we can expect that, given an element \(z : \bigcirc_\chi X\), it is possible to erase the information telling us how to compute it relative to \(\chi\) and just return the underlying element with no computational information, i.e. an element of \(\nabla_0 X\). We can do this as long as \(B\) is \(\neg\neg\)-separated, as follows.
\begin{thm}
  Let \(\chi : A \to \nabla_0 B\) be an oracle where \(B\) is \(\neg\neg\)-separated. Then for every \(X : \univ\) there is a unique map \(\mathtt{erase} : \bigcirc_\chi X \to \nabla_0 X\) making the following commutative triangle.
  \[
    \begin{tikzcd}
      X \ar[r] \ar[d] & \nabla_0 X \\
      \bigcirc_\chi X \ar[ur, swap, "\mathtt{erase}"] &
    \end{tikzcd}
  \]
\end{thm}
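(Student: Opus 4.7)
The plan is to reduce the statement to a claim about connectedness via the universal property of modalities. Since $\bigcirc_\chi$ is a modality, giving a unique extension $\bigcirc_\chi X \to \nabla_0 X$ of the composite $X \to \nabla_0 X$ is equivalent to showing that $\nabla_0 X$ is $\bigcirc_\chi$-modal. So the whole task reduces to verifying $\bigcirc_\chi \leq_T \nabla_0$, because then every $\nabla_0$-modal type (in particular $\nabla_0 X$) is $\bigcirc_\chi$-modal.

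Since $\bigcirc_\chi$ is by definition the nullification of the family $a : A \vdash \chi(a){\downarrow}$, Proposition~\ref{prop:nullmodalityordering} tells us that $\bigcirc_\chi \leq_T \nabla_0$ is equivalent to the statement that for every $a : A$ the type $\chi(a){\downarrow}$ is $\nabla_0$-connected. So the proof reduces to verifying this fact for each $a$.

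Now fix $a : A$ and write $\chi(a) = (\xi, \ldots)$ where $\xi : B \to \Omega_{\neg\neg}$. The type $\chi(a){\downarrow}$ is $\sum_{b : B} |\xi(b)|$. The second clause of Definition~\ref{defn:dnsheafification} says any two witnesses are equal up to $\neg\neg$; because $B$ is $\neg\neg$-separated this strengthens to actual equality, and since each $|\xi(b)|$ is a proposition we conclude that $\chi(a){\downarrow}$ is itself a proposition. The first clause of Definition~\ref{defn:dnsheafification} then says precisely that this proposition is $\neg\neg$-dense.

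The main remaining step, and the place where a little genuine work is required, is to show that every $\neg\neg$-dense proposition $P$ is $\nabla_0$-connected, i.e.\ that $\nabla_0 P$ is contractible. I expect to do this by unfolding Definition~\ref{defn:dnsheafification} directly. Take the candidate element $\xi_0 : P \to \Omega_{\neg\neg}$ with $\xi_0(p) = \top$; the emptiness clause reduces to $\neg\neg P$, which holds by assumption, and the uniqueness clause is trivial because $P$ is a proposition. For contractibility I then show any other $\xi : P \to \Omega_{\neg\neg}$ in $\nabla_0 P$ agrees with $\xi_0$: assuming $p : P$, the clause $\neg\neg \sum_{p' : P} |\xi(p')|$ gives some $p'$ with $|\xi(p')|$ under $\neg\neg$, and since $P$ is a proposition $p = p'$, so $\neg\neg |\xi(p)|$, and $\neg\neg$-stability of $|\xi(p)|$ yields $|\xi(p)|$; by propositional extensionality for $\neg\neg$-stable propositions this gives $\xi(p) = \top = \xi_0(p)$ pointwise, hence $\xi = \xi_0$ by function extensionality. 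Uniqueness of $\mathtt{erase}$ comes for free from the universal property used in the first step.
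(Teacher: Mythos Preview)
Your proof is correct, and the core observation---that under the hypothesis on $B$ each $\chi(a){\downarrow}$ is a $\neg\neg$-dense proposition---is exactly the one the paper isolates. The difference is in how you use it. The paper finishes in one line: being a $\neg\neg$-sheaf is \emph{by definition} having the constant map $Y \to Y^{P}$ an equivalence for every $\neg\neg$-dense proposition $P$, which is literally the nullity condition for the family $a \mapsto \chi(a){\downarrow}$; so $\nabla_0 X$ is $\bigcirc_\chi$-modal immediately from the earlier theorem that $\nabla_0 X$ is a $\neg\neg$-sheaf. You instead route through the Galois connection, reducing to $\nabla_0$-connectedness of each $\chi(a){\downarrow}$, and then compute $\nabla_0 P \simeq 1$ by hand from Definition~\ref{defn:dnsheafification}. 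That computation is fine, but it is essentially reproving a special case of what ``$\nabla_0 X$ is a $\neg\neg$-sheaf'' already gives you. Your route has the minor advantage of establishing the reusable inequality $\bigcirc_\chi \leq_T \nabla_0$ explicitly; the paper's route is shorter and avoids unfolding $\nabla_0$ a second time.
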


\begin{proof}
  By the general properties of modalities, it suffices to show that \(\nabla_0 X\) is \(\bigcirc_\chi\)-modal, i.e. that is null for the family \(a : A \vdash \chi (a){\downarrow}\). However, since \(B\) is \(\neg\neg\)-separated, \(\chi (a){\downarrow}\) is a proposition, and it is clearly \(\neg\neg\)-dense, so this follows from that fact that \(\nabla_0 X\) is a \(\neg\neg\)-sheaf.
\end{proof}

\begin{lemma}
  \label{lem:stripandrecall}
  Given an oracle \(\chi : A \to \nabla_0 B\) such that \(B\) is \(\neg\neg\)-separated, a type \(X\) and an element \(x\) of \(\bigcirc_\chi X\) then we can prove \(\bigcirc_\chi(\mathtt{erase}(x) {\downarrow})\).
\end{lemma}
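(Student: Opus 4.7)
The plan is to apply the induction principle for the modality $\bigcirc_\chi$ to the element $x : \bigcirc_\chi X$, with motive $P(z) := \mathtt{erase}(z){\downarrow}$, reducing the problem to the special case $x = \eta(x')$ for some $x' : X$. This is justified because the codomain $\bigcirc_\chi(P(z))$ is $\bigcirc_\chi$-modal by construction, so the eliminator $\mathtt{elim}_X$ from the definition of modality applies.

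For the base case, I would use the fact that the diagram defining $\mathtt{erase}$ in the previous theorem commutes, which gives us the identity $\mathtt{erase}(\eta(x')) = \eta_{\nabla_0}(x')$ for every $x' : X$. Hence it suffices to construct an element of $\bigcirc_\chi(\eta_{\nabla_0}(x'){\downarrow})$. Since $\eta_{\nabla_0} : X \to \nabla_0 X$ is the unit of the reflection and $\eta_{\nabla_0}(x')$ is (by unfolding Definition~\ref{defn:dnsheafification}) a $\neg\neg$-stable subset of $X$ having $x'$ canonically in its domain, we obtain an element of $\eta_{\nabla_0}(x'){\downarrow}$ directly from $x'$, and then apply $\eta_{\bigcirc_\chi}$ to land in $\bigcirc_\chi(\eta_{\nabla_0}(x'){\downarrow})$.

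Putting these together, one defines
\[
f : \prod_{x' : X} \bigcirc_\chi\bigl( \mathtt{erase}(\eta(x')){\downarrow} \bigr), \qquad f(x') := \eta_{\bigcirc_\chi}(x', *)
\]
where $*$ denotes the canonical witness that $x'$ lies in the domain of $\eta_{\nabla_0}(x')$, and then the desired element is $\mathtt{elim}_X(f)(x)$.

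The only subtlety is ensuring the existence of the canonical witness $*$; this ultimately reduces to observing that, although $\eta_{\nabla_0}(x')$ is only characterised by a universal property rather than given explicitly, that universal property forces its domain $\sum_{y : X}\eta_{\nabla_0}(x')(y)$ to be (non-constructively) pointed by $x'$. This is immediate from the description of the unit in the proof that $\nabla_0$ is a modality, and I do not anticipate any further obstacles.
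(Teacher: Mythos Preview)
Your proposal is correct and follows exactly the approach of the paper, which simply says ``we can prove this directly by modal induction on \(\bigcirc_\chi X\), since \(\bigcirc_\chi(\mathtt{erase}(x) {\downarrow})\) is modal, and it is clear for elements of the form \(\eta(x)\).'' You have merely spelled out the details the paper leaves implicit. One small remark: the domain of \(\eta_{\nabla_0}(x')\) is in fact \emph{constructively} pointed (the unit sends \(x'\) to the \(\neg\neg\)-stable subset \(\lambda y.\,\neg\neg(y = x')\), so \((x',\lambda f.f(\mathtt{refl}))\) is an explicit witness), so there is no need for the hedge ``non-constructively'' in your last paragraph.
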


\begin{proof}
  E.g. we can prove this directly by modal induction on \(\bigcirc_\chi X\), since \(\bigcirc_\chi(\mathtt{erase}(x) {\downarrow})\) is modal, and it is clear for elements of the form \(\eta(x)\).
\end{proof}

The assumption that \(B\) is \(\neg\neg\)-separated also allows us to show \(\bigcirc_\chi\) is lex.
\begin{thm}
  Let \(\chi : A \to \nabla_0 B\) where \(B\) is \(\neg\neg\)-separated. Then \(\bigcirc_\chi\) is a topological modality, and in particular is lex.
\end{thm}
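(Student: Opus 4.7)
The plan is to show directly that $\bigcirc_\chi$ fits the definition of a topological modality, after which lexness follows from the cited theorem. Recall that $\bigcirc_\chi$ is by definition the nullification of the family $a : A \vdash \chi(a){\downarrow}$, and that a modality is called topological precisely when it is the nullification of a family of propositions. So the only thing to verify is that under the hypothesis that $B$ is $\neg\neg$-separated, each type $\chi(a){\downarrow}$ is already a proposition, after which the topological-modality theorem gives lexness for free.

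First I would unfold $\chi(a){\downarrow}$ according to the remark following Definition~\ref{defn:dnsheafification}: an element of $\nabla_0 B$ is a map $\xi : B \to \Omegann$ satisfying the two conditions, and $\xi {\downarrow}$ is the $\Sigma$-type $\sum_{b : B} \xi(b)$. To show this is a proposition, given $(b, p)$ and $(b', p')$ I need to produce an identification of both components. From condition (2) of Definition~\ref{defn:dnsheafification} applied to $\xi = \chi(a)$, together with $p$ and $p'$, I obtain $\neg\neg(b = b')$. Because $B$ is $\neg\neg$-separated, the identity type $b = b'$ is $\neg\neg$-stable, so this yields an actual equality $q : b = b'$. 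The second components then live in the $\neg\neg$-stable proposition $\chi(a)(b')$, so $q_\ast p = p'$ automatically, and the pair is identified.

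Consequently the family $a : A \vdash \chi(a){\downarrow}$ is a family of propositions, so $\bigcirc_\chi$ is by definition a topological modality. By the theorem stating that nullification at a family of propositions is lex, we conclude that $\bigcirc_\chi$ is lex.

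I do not expect any real obstacle here: the argument is essentially bookkeeping combining the remark after Definition~\ref{defn:dnsheafification} with the standing theorem that topological modalities are lex. The only subtle point is making sure that $\neg\neg$-separatedness of $B$ is exactly what is needed to pass from $\neg\neg(b=b')$ to $b=b'$; this is immediate from the definition of $\neg\neg$-separated (identity types are $\neg\neg$-modal, hence $\neg\neg$-stable as propositions).
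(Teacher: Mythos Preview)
Your proposal is correct and follows exactly the paper's approach: the paper's proof simply notes that since \(B\) is \(\neg\neg\)-separated each \(\chi(a){\downarrow}\) is a proposition, and then invokes \cite[Corollary~3.12]{rijkeshulmanspitters}. You have merely spelled out in more detail why \(\chi(a){\downarrow}\) is a proposition, which the paper leaves implicit (cf.\ the remark after Definition~\ref{defn:dnsheafification}).
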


\begin{proof}
  \(\bigcirc_\chi\) is the nullification of the family \(a : A \vdash \chi (a){\downarrow}\). Since \(B\) is \(\neg\neg\)-separated, each \(\chi (a){\downarrow}\) is a proposition, and so we can apply \cite[Corollary 3.12]{rijkeshulmanspitters}.
\end{proof}

\begin{defn}
  Suppose we are given oracles \(\chi : A \to \nabla_0 B\) and \(\chi' : A' \to \nabla_0 B'\). We say \(\chi\) is \emph{Turing reducible} to \(\chi'\) and write \(\chi \leq_T \chi'\) if \(\bigcirc_{\chi} \leq_{T} \bigcirc_{\chi'}\).
\end{defn}

It is clear that this defines a preorder on oracles. We say two oracles \(\chi, \chi'\) are \emph{Turing equivalent} if \(\bigcirc_{\chi} = \bigcirc_{\chi'}\).

Also note that we have the following proposition.
\begin{prop}
  Let \(B\) be \(\neg\neg\)-separated. An oracle \(\chi : A \to \nabla_0 B\) extends to a total function \(A \to B\) if and only if \(\bigcirc_\chi\) is the identity modality.
\end{prop}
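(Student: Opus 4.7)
The plan is to unpack both directions using the characterisation of the identity modality via the ordering $\leq_T$ and Proposition~\ref{prop:nullmodalityordering}, together with the fact that $B$ being $\neg\neg$-separated forces each $\chi(a){\downarrow}$ to be a proposition. Throughout I will use that identity-connected means contractible, and that a proposition is contractible precisely when it is inhabited.

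For the forward direction, suppose $\chi$ extends to a total function, i.e., there is $f : A \to B$ with $\chi(a)\downarrow = f(a)$ for every $a$. Then each $\chi(a){\downarrow}$ is inhabited; being a proposition it is contractible, and hence identity-connected. By Proposition~\ref{prop:nullmodalityordering} (instantiated with $\Diamond$ the identity modality) this gives $\bigcirc_\chi \leq_T \mathrm{id}$. Since $\mathrm{id} \leq_T \bigcirc$ holds for any modality $\bigcirc$, antisymmetry of $\leq_T$ yields $\bigcirc_\chi = \mathrm{id}$.

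For the converse, suppose $\bigcirc_\chi$ is the identity modality. Each $\chi(a){\downarrow}$ is $\bigcirc_\chi$-connected by construction of the nullification, so identity-connected, so contractible. In particular it is inhabited, so projection gives an element of $B$; this defines the desired $f : A \to B$ with $\chi(a)\downarrow = f(a)$.

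I do not expect any serious obstacle here: once the observation that $\chi(a){\downarrow}$ is a proposition (because $B$ is $\neg\neg$-separated, as already used in the preceding lex result) is recorded, everything reduces to Proposition~\ref{prop:nullmodalityordering} and the trivial computation that contractible propositions are inhabited. The only minor point to be careful about is interpreting ``extends to a total function'' as the existence of $f : A \to B$ with $\chi(a)\downarrow = f(a)$, i.e., commuting with the unit $\eta : B \to \nabla_0 B$; this is the natural reading given the notation $\xi\downarrow= x$ set up earlier.
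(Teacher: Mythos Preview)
Your proof is correct and essentially the same as the paper's. The only cosmetic difference is that in the forward direction you route through Proposition~\ref{prop:nullmodalityordering} and antisymmetry of $\leq_T$, whereas the paper argues directly that if each $\chi(a){\downarrow}$ is contractible then every type is null and hence $\bigcirc_\chi$-modal; both arguments rest on the same observation that $B$ being $\neg\neg$-separated makes $\chi(a){\downarrow}$ a proposition, so inhabited is the same as contractible.
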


\begin{proof}
  First note that to extend \(\chi : A \to \nabla_0 B\) to a total function is precisely to give a term of type \(\prod_{a : A} \sum_{b : B} \chi(a) = \iota(b)\). It is straightforward to check that \(\chi(a) = \iota(b)\) is equivalent to \(\chi(a) \downarrow= b\), and so since \(B\) is \(\neg\neg\)-separated \(\chi(a) {\downarrow}\) is a proposition and so we have such a term precisely when \(\chi(a) {\downarrow}\) is contractible for all \(a\). If every \(\chi(a) {\downarrow}\) is contractible, then every type is \(\bigcirc_\chi\)-modal, and so the reflective subuniverse corresponding to \(\bigcirc_\chi\) is the trivial one. Conversely, we have that  every \(\chi(a) {\downarrow}\) is \(\bigcirc_\chi\)-connected in any case, and when \(\bigcirc_\chi\) is the identity this precisely says \(\chi(a) {\downarrow}\) is contractible.
\end{proof}


\begin{defn}
 We define the \emph{Turing degrees} to be the collection of modalities of the form \(\bigcirc_\chi\) for \(\chi : \NN \to \nabla_0 2\). Equivalently, it is the set quotient of oracles \(\NN \to \nabla_0 2\) by Turing equivalence.
\end{defn}

As a sanity check, we verify that this notion is non trivial.
\begin{prop}
  \label{prop:haltingset}
  Assume Church's thesis. Then there is an oracle \(\chi : \NN \to \nabla_0 2\) such that \(\bigcirc_\chi\) is non trivial.
\end{prop}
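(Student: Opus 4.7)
The plan is to take $\chi$ to be a $\nabla_0$-version of the self-halting oracle and then derive a contradiction from the assumption that $\bigcirc_\chi$ is trivial, via a Kleene-style diagonalisation. Write $P(e) := \varphi_e(e){\downarrow}$; by the uniqueness condition built into ECT each $P(e)$ is a proposition, so Lemma~\ref{lem:defbycases} lets me define $\chi : \NN \to \nabla_0 2$ by setting $\chi(e) := 1$ when $P(e)$ and $\chi(e) := 0$ when $\neg P(e)$. Unfolding the construction of the lemma gives the explicit subset $\chi(e)(1) = \neg\neg P(e)$ and $\chi(e)(0) = \neg P(e)$.

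For the main argument I suppose $\bigcirc_\chi$ is the identity; by the preceding proposition $\chi$ then extends to some total $f : \NN \to 2$. Using that $\eta : 2 \to \nabla_0 2$ is injective (because $\nabla_0 2$ is $\neg\neg$-separated and $2$ has decidable equality), this extension is characterised by the biconditionals $f(e) = 1 \Leftrightarrow \neg\neg P(e)$ and $f(e) = 0 \Leftrightarrow \neg P(e)$. Church's thesis applied to the now total $f$ then yields a code $e_0$ with $\varphi_{e_0}(n){\downarrow} = f(n)$ for all $n$; the propositional truncation in ECT may be stripped since we are working towards a contradiction.

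The decisive step is to diagonalise. I would invoke Kleene's second recursion theorem, derivable from ECT via the $s$-$m$-$n$ theorem for the enumeration $\varphi$, to produce an index $e^*$ such that $\varphi_{e^*}$ is extensionally the partial function that on any input returns $0$ if $f(e^*) = 0$ and diverges otherwise. Specialising to $n = e^*$ yields $P(e^*) \Leftrightarrow f(e^*) = 0$. The main obstacle is precisely this step: the one-directional form of ECT on its own only produces codes that could over-converge, and it is the recursion theorem (equivalently, the existence of fixed points for computable code-valued functions) that secures the needed two-sided equivalence.

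Finally, combining $P(e^*) \Leftrightarrow f(e^*) = 0$ with the biconditionals for $f$ closes the argument. Assuming $P(e^*)$ gives both $f(e^*) = 0$ by the equivalence and $f(e^*) = 1$ via $\neg\neg P(e^*)$, contradicting decidable inequality in $2$, hence $\neg P(e^*)$. The biconditional for $f$ then forces $f(e^*) = 0$, and by the recursion-theorem equivalence $P(e^*)$ again, producing the contradiction $P(e^*) \wedge \neg P(e^*)$. Thus $\chi$ does not extend to a total function and $\bigcirc_\chi$ is non-trivial.
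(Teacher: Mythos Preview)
Your argument has a genuine gap, and the paper takes a more direct route that avoids it.

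You invoke Kleene's recursion theorem, claiming it is derivable from \(\ect\) via \(s\text{-}m\text{-}n\). But the \(\ect\) axiom here postulates only an abstract enumeration \(\varphi\) together with the \emph{truncated} assertion that every partial map with \(\neg\neg\)-stable domain has a code; it does not assert \(s\text{-}m\text{-}n\) for this particular \(\varphi\), and the truncation blocks you from extracting codes uniformly in a parameter. So neither \(s\text{-}m\text{-}n\) nor the recursion theorem is available from \(\ect\) alone---indeed, the paper later introduces the separate axiom of computable choice precisely to enable such uniform extractions. Without a fixed point your diagonalisation does not close: applying \(\ect\) to the partial function ``return \(0\) if \(f(n)=0\), else diverge'' yields a code \(e\), but when \(f(e)=1\) you cannot conclude that \(\varphi_e(e)\) diverges (you yourself note that \(\ect\) allows \(\varphi_e\) to over-converge), and no contradiction arises in that branch.

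The paper instead builds the diagonal into \(\chi\) itself. It sets \(\chi(n) := 0\) when \(\neg\,\varphi_n(n){\downarrow}\) and \(\chi(n) := 1 - \varphi_n(n)\) when \(\varphi_n(n){\downarrow}\). If \(\chi\) extended to a total \(f : \NN \to 2\), Church's thesis would give (merely, but we are proving \(\bot\)) a code \(e\) with \(\varphi_e(n)\downarrow = f(n)\) for all \(n\); in particular \(\varphi_e(e)\downarrow = f(e) \in 2\), so \(\chi(e)\downarrow = 1 - f(e)\), while also \(\chi(e)\downarrow = f(e)\) because \(f\) extends \(\chi\). That is the contradiction, with no fixed-point machinery needed. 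Your choice of \(\chi\) as the characteristic function of self-halting is exactly what forces you to reach for a recursion-theorem fixed point; the paper's choice makes the diagonal immediate.
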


\begin{proof}
  Using Lemma \ref{lem:defbycases} we can define the halting set, i.e.
  \[
    \chi(n) :=
    \begin{cases}
      0 & \neg \;\varphi_e(e) {\downarrow} \\
      1 - \varphi_e(e) & \varphi_e(e) {\downarrow}
    \end{cases}
  \]
  By the usual diagonal argument \(\chi\) cannot extend to a computable function \(\NN \to 2\). Hence by \(\ect\) it cannot extend to \emph{any} function \(\NN \to 2\). We have \(\prod_{n : \NN} \bigcirc_\chi (\chi(n) {\downarrow})\), and so if \(\bigcirc_\chi\) was the identity we could deduce \(\prod_{n : \NN} \chi(n) {\downarrow}\) giving a contradiction.
\end{proof}






\section{Comparison with weak truth table reducibility}
\label{sec:comparison-with-weak}

\newcommand{\leqwtt}{\leq_{\mathrm{wtt}}}
\newcommand{\lst}{\mathtt{List}}

To illustrate how our formulation of Turing reducibility fits in with other notions of reducibility, in this section we compare it with a definition of weak truth table reducibility (wtt reducibility) along similar lines to truth table reducibility, as defined in \cite{forsterjahn}. We show that Turing reducibility is strictly weaker than wtt reducibility, i.e. wtt reducibility implies Turing, and we give an example of a pair of oracles that are Turing reducible but not wtt reducible. We point out that Forster, Kirst and M\"{u}ck already showed that their synthetic definition of Turing reducibility is strictly weaker than truth table reducibility \cite{forsterkirstmuck}.

\begin{defn}
  For oracles \(\chi : A \to \nabla_0 B\) and \(\chi': A' \to \nabla_0 B'\), we say \(\chi\) is \emph{weak truth table reducible} to \(\chi'\) and write \(\chi \leqwtt \chi'\) if there is a function \(f : A \to \lst A'\) such that for all \(x : A\), if \(\chi'(y) {\downarrow}\) for every element \(y\) of \(f(x)\) then \(\chi(x) {\downarrow}\).
\end{defn}

For oracles \(\NN \to \nabla_0 B\) we can also the following more convenient form, which is straightforward to prove.
\begin{prop}
  For oracles \(\chi : \NN \to \nabla_0 B\) and \(\chi': \NN \to \nabla_0 B'\), \(\chi \leqwtt \chi'\) if and only if there is a function \(f : \NN \to \NN\) such that for all \(n : \NN\), if \(\chi'(m) {\downarrow}\) for all \(m < f(n)\) then \(f(n) {\downarrow}\).
\end{prop}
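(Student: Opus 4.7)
The plan is to show the two formulations are essentially the same by translating between a finite list of queries and a bound, using the order structure on \(\NN\). Note that the statement of the proposition contains what appears to be a typo: the conclusion \(f(n){\downarrow}\) should read \(\chi(n){\downarrow}\), since \(f\) is a total function. I will read it that way.

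For the forward direction, suppose \(\chi \leqwtt \chi'\) is witnessed by \(f : \NN \to \lst\NN\). I would define \(g : \NN \to \NN\) by setting \(g(n)\) to be one more than the maximum of the elements of the list \(f(n)\), taking \(g(n) = 0\) if \(f(n)\) is the empty list. Then every element \(m\) appearing in \(f(n)\) satisfies \(m < g(n)\), so from the assumption that \(\chi'(m){\downarrow}\) for all \(m < g(n)\) we obtain in particular \(\chi'(m){\downarrow}\) for every \(m\) occurring in \(f(n)\), and hence \(\chi(n){\downarrow}\) by the hypothesis on \(f\).

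For the backward direction, suppose \(g : \NN \to \NN\) satisfies the bound condition. I would define \(f : \NN \to \lst\NN\) by letting \(f(n)\) be the list \([0, 1, \ldots, g(n) - 1]\) of numbers less than \(g(n)\). Then if \(\chi'(m){\downarrow}\) holds for every \(m\) in \(f(n)\), it holds for every \(m < g(n)\), and we conclude \(\chi(n){\downarrow}\) by hypothesis on \(g\).

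There is no real obstacle here; the content is just that on a linearly ordered domain like \(\NN\) one can replace an arbitrary finite set of queries by the initial segment containing it. Both constructions are evidently computable (assuming \(f\) respectively \(g\) are), so they produce genuine witnesses of the relevant reducibility notion and not merely a classical existence claim.
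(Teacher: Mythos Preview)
Your proposal is correct, including the observation that the conclusion \(f(n){\downarrow}\) must be a typo for \(\chi(n){\downarrow}\). The paper does not spell out a proof, remarking only that the equivalence is ``straightforward to prove''; the argument you give---replacing a finite list of queries by the initial segment up to its maximum, and conversely---is exactly the intended one.
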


\begin{thm}
  For oracles \(\chi : A \to \nabla_0 B\) and \(\chi': A' \to \nabla_0 B'\), if \(\chi \leqwtt \chi'\) then \(\chi \leq_T \chi'\).
\end{thm}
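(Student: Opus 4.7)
The plan hinges on Proposition \ref{prop:nullmodalityordering}: since $\bigcirc_\chi$ is by definition the nullification of the family $a : A \vdash \chi(a){\downarrow}$, proving $\bigcirc_\chi \leq_T \bigcirc_{\chi'}$ reduces to showing that for each $a : A$, the type $\chi(a){\downarrow}$ is $\bigcirc_{\chi'}$-connected.

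First I would fix $x : A$. The hypothesis $\chi \leqwtt \chi'$ gives a list $f(x) : \lst A'$ together with a function
\[
  g : \prod_{y \in f(x)} \chi'(y){\downarrow} \longrightarrow \chi(x){\downarrow},
\]
whose domain I will call $P(x)$. Since $\bigcirc_{\chi'}$ is the nullification of the family $a' : A' \vdash \chi'(a'){\downarrow}$, each individual factor $\chi'(y){\downarrow}$ is $\bigcirc_{\chi'}$-connected by \cite[Corollary 1.37]{rijkeshulmanspitters}. I would then argue by induction on the list $f(x)$ that $P(x)$ itself is $\bigcirc_{\chi'}$-connected. The key inductive step uses the standard closure of $\bigcirc_{\chi'}$-connected maps under pullback and composition (\cite[Theorems 1.34, 1.32]{rijkeshulmanspitters}): for a cons $[y] \mathbin{+\!\!+} xs$, the projection $\chi'(y){\downarrow} \times P_{xs}(x) \to P_{xs}(x)$ is a pullback of the connected map $\chi'(y){\downarrow} \to 1$, and composing with $P_{xs}(x) \to 1$ (connected by the inductive hypothesis) shows $\chi'(y){\downarrow} \times P_{xs}(x) \to 1$ is connected.

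Applying $\bigcirc_{\chi'}$ to $g$ then yields a map $\bigcirc_{\chi'} P(x) \to \bigcirc_{\chi'}(\chi(x){\downarrow})$ whose domain is contractible, establishing that the codomain is at least inhabited. To upgrade to contractibility, I would appeal to the (implicit) assumption—consistent with how oracles are treated throughout the rest of the paper—that $B$ is $\neg\neg$-separated, so that $\chi(x){\downarrow}$ is a proposition. Since $B'$ being $\neg\neg$-separated makes $\bigcirc_{\chi'}$ a topological modality, it preserves propositions, and hence $\bigcirc_{\chi'}(\chi(x){\downarrow})$ is an inhabited proposition, therefore contractible.

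The main obstacle I anticipate is precisely this last step: the straightforward argument only delivers inhabitedness of $\bigcirc_{\chi'}(\chi(x){\downarrow})$ from a map out of a contractible type, and passing to contractibility genuinely requires $\chi(x){\downarrow}$ to be propositional. Everything else is a routine combination of the characterisation of oracle reducibility via Proposition \ref{prop:nullmodalityordering} and the closure properties of connected maps.
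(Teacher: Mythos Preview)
Your approach is essentially the paper's: both fix \(x : A\), show that the finite product \(\prod_{y \in f(x)} \chi'(y){\downarrow}\) is \(\bigcirc_{\chi'}\)-connected, and then push the wtt implication \(g\) through to inhabit \(\bigcirc_{\chi'}(\chi(x){\downarrow})\). The only cosmetic difference is that the paper invokes \cite[Lemma~1.27]{rijkeshulmanspitters} (modalities preserve finite products) together with functoriality, whereas you phrase the same fact as a list induction via closure of connected maps under pullback and composition.

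You are actually more careful than the paper on the last step. The paper's proof ends with ``so deduce \(\bigcirc_{\chi'} \chi(x){\downarrow}\), confirming \(\chi \leq_T \chi'\),'' which literally only establishes inhabitation, not the contractibility that Proposition~\ref{prop:nullmodalityordering} demands. Your observation that one needs \(\chi(x){\downarrow}\) to be a proposition---hence \(B\) to be \(\neg\neg\)-separated, and \(B'\) likewise so that \(\bigcirc_{\chi'}\) is topological and preserves propositions---is exactly right, and is the implicit standing assumption the paper is silently relying on (and which the Agda formalisation presumably makes explicit).
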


\begin{proof}
  Let \(x : A\). We have \(\prod_{y \in f(x)} \chi'(y) {\downarrow} \to \chi(x) {\downarrow}\). Since modalities preserve finite products \cite[Lemma 1.27]{rijkeshulmanspitters} and are functorial \cite[Lemma 1.21]{rijkeshulmanspitters}, we can deduce \(\prod_{y \in f(x)} \bigcirc_{\chi'} \chi'(y) {\downarrow} \,\to\, \bigcirc_{\chi'} \chi(x) {\downarrow}\). However, we can find an element of \(\prod_{y \in f(x)} \bigcirc_{\chi'} \chi'(y) {\downarrow}\) directly, and so deduce \(\bigcirc_{\chi'} \chi(x) {\downarrow}\), confirming \(\chi \leq_T \chi'\).
\end{proof}

Below we will assuming we have an encoding of finite lists of numbers \(\lst(\NN) \cong \NN\). We write \(\langle k_1,\ldots,k_n \rangle\) for the natural number encoding the list \([k_1,\ldots,k_n]\).
\begin{lemma}
  \label{lem:wttrepr}
  Assume \(\chi \leqwtt \chi'\) and assume \(\ect\). Then there merely exists a total computable function \(\varphi_{e_0} : \NN \to \NN\) and a partial computable function \(\varphi_{e_1} : \NN \rightharpoondown \NN\) satisfying the following. For all \(n : \NN\), \(\chi(n)\) and \(\varphi_{e_1}(\langle n, \chi'(0),\ldots,\chi'(\varphi_{e_0}(n) - 1) \rangle)\) are both defined and are equal as elements of \(2\).
\end{lemma}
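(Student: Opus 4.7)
The plan is to extract from the wtt hypothesis a total function $f$, apply $\ect$ to get a total computable code $e_0$ for $f$, then build a partial function $h : \NN \to \partial\NN$ that reads off $\chi(n)$ from a correct tuple of values of $\chi'$, and apply $\ect$ a second time to produce $e_1$. The two mere-existence claims can then be combined inside the propositional truncation.

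First I would unfold $\chi \leqwtt \chi'$ using the simpler $\NN$-form from the preceding proposition to obtain $f : \NN \to \NN$ such that $\chi(n){\downarrow}$ whenever $\chi'(m){\downarrow}$ for every $m < f(n)$. Since $f$ is total, composing with the canonical inclusion $\NN \hookrightarrow \partial\NN$ and invoking $\ect$ yields merely a code $e_0$ with $\varphi_{e_0}(n) \downarrow = f(n)$ for all $n$; totality of $f$ guarantees $e_0$ codes a total function.

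Next I would define $h$ as follows. Given $k : \NN$, decode it via a fixed bijection $\NN \cong \lst\NN$ as $\langle n, b_0, \ldots, b_{m-1}\rangle$. Take the domain of $h(k)$ to be the conjunction of the decidable proposition ``$m = f(n)$ and each $b_i < 2$'' together with $\prod_{i < m} \chi'(i) \downarrow = b_i$. Because $2$ is $\neg\neg$-separated, each $\chi'(i) \downarrow = b_i$ is a $\neg\neg$-stable proposition (essentially the element $\chi'(i)(b_i)$ of $\Omegann$), so the whole domain is a $\neg\neg$-stable proposition and $h$ genuinely lands in $\partial\NN$. When the domain holds, the wtt hypothesis provides $\chi(n){\downarrow}$, which is itself a proposition by $\neg\neg$-separatedness of $2$, so projecting gives a unique value $b \in 2$ that $h$ returns, embedded into $\NN$. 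A second application of $\ect$ to $h$ then produces merely a code $e_1$ with $\varphi_{e_1}(k) \downarrow = h(k)$ whenever $h(k){\downarrow}$.

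Combining the two mere-existence statements inside the propositional truncation fixes a pair $(e_0, e_1)$, and the claimed property is then immediate from the construction: under the implicit hypothesis that $\chi'(i){\downarrow}$ for $i < f(n)$ which makes the tuple $\langle n, \chi'(0), \ldots, \chi'(f(n)-1)\rangle$ meaningful, the wtt hypothesis delivers $\chi(n){\downarrow}$ with its unique value, $h$ computes that same value on this tuple, and therefore so does $\varphi_{e_1}$. The main obstacle is purely bookkeeping in the definition of $h$: verifying that its domain is a $\neg\neg$-stable proposition so that $h$ truly has codomain $\partial\NN$ and $\ect$ applies, and that the extraction of the value of $\chi(n)$ from $\chi(n){\downarrow}$ is well-defined. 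These steps are routine but rely crucially on $\neg\neg$-separatedness of $2$ to make both $\chi'(i) \downarrow = b_i$ and $\chi(n){\downarrow}$ propositions.
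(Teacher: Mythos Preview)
Your proposal is correct and follows essentially the same two-step approach as the paper: apply Church's thesis to the bound function \(f\) to obtain \(e_0\), then apply \(\ect\) a second time to a suitable partial function to obtain \(e_1\). The paper's proof is extremely terse (two sentences) and leaves the construction of the partial function implicit; you have simply spelled out what that function \(h\) is and why its domain lands in \(\Omegann\), which is exactly the bookkeeping the paper suppresses.
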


\begin{proof}
  Applying Church's thesis gives us \(e_0\) such that for all \(n : \NN\),  \(\prod_{m < \varphi_{e_0}(n)} \chi'(m)\) implies \(\chi(n) {\downarrow}\). We can now apply \(\ect\) to find the required \(e_1\).
\end{proof}

Our example follows Rogers \cite[Section 9.1]{rogers}.
\begin{thm}
  There are oracles \(\zeta, \kappa : \NN \to \nabla_0 2\) such that \(\zeta \leq_T \kappa\) and \(\zeta \nleq_\mathrm{wtt} \kappa\).
\end{thm}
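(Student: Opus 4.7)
The plan is to adapt Rogers' classical construction to the synthetic setting, using Lemma~\ref{lem:wttrepr} as the bridge to computability: under $\ect$, any wtt-reduction $\zeta \leqwtt \kappa$ yields merely a total computable bound $\varphi_{e_0}$ on the oracle queries, so defeating wtt-reducibility amounts to ensuring that no total computable function can bound the use of the reduction.

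Concretely, I would take $\kappa : \NN \to \nabla_0 2$ to be the halting-set oracle of Proposition~\ref{prop:haltingset}, and exploit the (classically) hyperimmune complement of the halting set. Let $g$ be the principal function of $\overline{K}$, i.e.\ $g(n)$ is the $n$-th $k$ for which $\varphi_k(k)$ fails to halt. This is definable from $\kappa$ by iterated unbounded search and takes values in $\nabla_0 \NN$; I would then define $\zeta(n) \downarrow= \kappa(g(n)+1)$, or some similar functional of $g$ whose evaluation genuinely requires scanning $\kappa$ up to index $g(n)$. For $\zeta \leq_T \kappa$, the point is that in the modality $\bigcirc_\kappa$ every $\kappa(n){\downarrow}$ is contractible, so the unbounded search defining $g$ can be carried out $\bigcirc_\kappa$-internally, and thus $\zeta(n){\downarrow}$ is $\bigcirc_\kappa$-connected; Proposition~\ref{prop:nullmodalityordering} then yields the reduction.

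For $\zeta \nleq_{\mathrm{wtt}} \kappa$, suppose otherwise and apply Lemma~\ref{lem:wttrepr} to get $\varphi_{e_0}$ total computable and $\varphi_{e_1}$ partial computable witnessing the reduction. Since the value of $\zeta(n)$ genuinely depends on $\kappa$ restricted to $[0, g(n)+2)$ (all earlier halting/non-halting facts being needed to pin down $g(n)$), the reduction forces $\varphi_{e_0}(n) \geq g(n)$ for all $n$, so $\varphi_{e_0}$ dominates $g$. Translating the classical argument under $\ect$, a computable function dominating the principal function of $\overline{K}$ yields an infinite computable subset of $\overline{K}$, contradicting the simplicity of $K$, which is itself derivable from $\ect$ via the standard diagonal argument.

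The main obstacle is this last step, namely internalizing the hyperimmunity-of-$\overline{K}$ argument under $\ect$ in the modal/partial setting; one must carefully track the $\nabla_0$-valued data and verify that the classical construction of an infinite r.e.\ subset of $\overline{K}$ from the dominating function $\varphi_{e_0}$ goes through synthetically. If this direct route proves too delicate, a cleaner backup is to diagonalize explicitly: enumerate pairs $(e_0, e_1)$ coming from Lemma~\ref{lem:wttrepr} and perform a priority-style $\kappa$-computable construction of $\zeta$ which, for each pair with $\varphi_{e_0}$ total, plants a disagreement $\zeta(n) \neq \varphi_{e_1}(\langle n, \kappa(0), \ldots, \kappa(\varphi_{e_0}(n)-1)\rangle)$ at a fresh $n$; Lemma~\ref{lem:wttrepr} then makes these disagreements incompatible with any wtt-reduction, while $\zeta \leq_T \kappa$ is immediate from the fact that the construction is uniformly $\bigcirc_\kappa$-internal.
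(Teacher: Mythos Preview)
Your primary route via hyperimmunity has a genuine gap. The step ``the value of \(\zeta(n)\) genuinely depends on \(\kappa\) restricted to \([0, g(n)+2)\)'' is the whole content of the theorem, and the definition \(\zeta(n) := \kappa(g(n)+1)\) does not justify it. The fact that \emph{your} computation of \(\zeta(n)\) queries \(\kappa\) up to \(g(n)\) says nothing about what an arbitrary wtt-reduction must query: for all you have shown, the function \(n \mapsto \kappa(g(n)+1)\) could be outright computable (you have no control over whether \(g(n)+1\) lands in \(K\)), in which case \(\zeta \leqwtt \kappa\) with use bound \(0\). What you need is a \emph{use lower bound}---that every reduction must consult \(\kappa\) beyond any computable bound---and that is exactly what direct diagonalisation supplies and what a bare appeal to hyperimmunity of \(\overline{K}\) does not.

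Your backup is the right idea and is what the paper does, but much more simply than you sketch: no priority argument and no ``fresh \(n\)'' bookkeeping are needed. Take \(\kappa\) to encode the full halting problem \(\kappa(\langle e, n\rangle)\), so that \(\bigcirc_\kappa\)-internally one can decide, for each pair \((e_0, e_1)\), whether \(\varphi_{e_0}(\langle e_0, e_1\rangle){\downarrow}\) and, if so, evaluate the candidate output \(\varphi_{e_1}(\langle \langle e_0, e_1\rangle, \kappa(0),\ldots,\kappa(\varphi_{e_0}(\langle e_0, e_1\rangle)-1)\rangle)\). Then simply set \(\zeta(\langle e_0, e_1\rangle)\) to disagree with that value. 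This is a single \(\bigcirc_\kappa\)-internal case split (compose with \(\mathtt{erase}\) to land in \(\nabla_0 2\)), so \(\zeta \leq_T \kappa\) is immediate; and applying Lemma~\ref{lem:wttrepr} to a putative wtt-reduction yields a contradiction at \(n = \langle e_0, e_1\rangle\) in one line.
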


\begin{proof}
  We first define \(\kappa\) to encode the halting problem for \(\varphi_e(n)\) for all \(e, n : \NN\), i.e.
  \begin{equation*}
    \kappa(\langle e , n \rangle) =
    \begin{cases}
      \top & \varphi_e(n) {\downarrow} \\
      \bot & \text{otherwise}
    \end{cases}
  \end{equation*}

  We now construct \(\zeta\).

  We first observe that for all \(e_0, e_1, n : \NN\) we can show the following, by repeatedly querying the oracle \(\kappa\).
  \[\bigcirc_\kappa (\dec( \varphi_{e_0}(n) {\downarrow} \times \varphi_{e_1}(\langle n, \kappa(0), \ldots, \kappa(\varphi_{e_0}(n) - 1)\rangle) \downarrow= \top)\]
  Next, by reasoning internally in the reflective subuniverse for \(\bigcirc_\kappa\), we can deduce the following for all \(e_0, e_1 : \NN\).
  \begin{multline*}
    \bigcirc_\kappa\sum_{b \in 2} (b = \bot \;\leftrightarrow\ \\
    \varphi_{e_0}(\langle e_0, e_1 \rangle) {\downarrow} \times \varphi_{e_1}(\langle \langle e_0, e_1 \rangle, \kappa(0), \ldots, \kappa(\varphi_{e_0}(\langle e_0, e_1 \rangle) - 1)\rangle) \downarrow= \top)
  \end{multline*}
  Composing with relativised projection to \(2\) and \(\mathtt{erase} : \bigcirc_\kappa 2 \to \nabla_0 2\) gives us \(\zeta : \NN \to \nabla_0 2\), which is Turing reducible to \(\kappa\) by construction.

  Finally, suppose that we had \(\zeta \leqwtt \kappa\). By Lemma~\ref{lem:wttrepr} there merely exists \(e_0, e_1\) such that for all \(n\), \(\zeta(n)\) and \(\varphi_{e_1}(\langle n, \kappa(0),\ldots,\kappa(\varphi_{e_0}(n) - 1) \rangle)\) are both defined and are equal as elements of \(2\). But this gives a contradiction for \(n = \langle e_0, e_1 \rangle\).
\end{proof}

\begin{rmk}
  The results of this section appear in {\tt WeakTT.agda} in the formalisation.
\end{rmk}

\section{Markov induction}
\label{sec:markov-induction}

\subsection{Markov induction and Markov's principle in HoTT}
\label{sec:mark-induct-mark}

Recall that Markov's principle is a commonly used axiom in recursive constructive mathematics that asserts the existence of numbers that can be found by unbounded search, possibly using the law of excluded middle. Formally, it is stated as follows.
\begin{defn}
  \emph{Markov's principle}, \(\markov\), states that given a family of types \(P : \NN \to \univ\) together with a proof of \(\prod_{n : \NN} \dec(P(n))\) we have \(\neg\neg \sum_{n : \NN} P(n) \to \sum_{n : \NN} P(n)\).
\end{defn}

For our purposes, however, \(\markov\) is too weak. The reason is that it requires as antecedent a proof that every \(P(n)\) is decidable, which in realizability semantics requires a computable function that tells us whether or not each \(P(n)\) is true. We will give several examples of arguments where it is only possible to compute this using an oracle (i.e. \(f : \NN \to \bigcirc_\chi (P(n) + \neg P(n))\)) and still wish to carry out an unbounded search. We will derive the necessary relativised version of \(\markov\) from an axiom that we denote \emph{Markov induction}. Although the precise formulation and applications in synthetic computability theory are new, Markov induction is equivalent to the axiom of Generalized Markov's principle for double negation, as formulated by Hofmann, Van Oosten and Streicher in \cite{hofmannvoostenstreicher}. Essentially, we can extend the binary relation ``is successor of'' from \(\NN\) to a binary relation on \(\nabla_{0} \NN\), defined below. Markov induction says that this binary relation is well founded in the sense of \cite[Section 10.3]{hottbook}. We state the axiom as an induction principle for convenience both in checking that it holds in cubical assemblies, and in using it in practice.

\begin{defn}
  Let \(\mu, \nu : \nabla_0 \NN\). We say \(\nu\) is a \emph{successor} of \(\mu\) if \(\mu \downarrow = m\) implies \(\nu \downarrow = m + 1\) for all \(m : \NN\). We write \(\issuc(\mu, \nu)\).
\end{defn}

\begin{axiom}
  \emph{Markov induction}, \(\markovind\) states that given \(P : \nabla_0 \NN \to \univ\) together with a term \(h : \prod_{\nu : \nabla_0 \NN} (\prod_{\mu : \nabla_0 \NN} \issuc(\mu, \nu) \to P(\mu)) \to P(\nu)\), we have a section \(s : \prod_{\nu : \nabla_0 \NN} P(\nu)\), together with paths witnessing the following computation rule: for all \(\mu, \nu : \nabla_{0} \NN\) if \(\issuc(\mu, \nu)\) we have \(s(\nu) = h (\nu, \lambda \mu, x.s(\mu))\).
\end{axiom}

\begin{rmk}
  The computation rule was added to make the axiom more natural as an induction principle, but isn't required for any of the results of the paper and isn't postulated in the Agda formalisation.
\end{rmk}

\begin{thm}[Relativised Markov's principle]
  \label{thm:relmarkov}
  Let \(\chi : A \to \nabla_0 B\) be an oracle, a family of types \(P : \NN \to \univ\) and \(f :
  \prod_{n : \NN} \bigcirc_\chi \dec(P(n))\). Then \(\neg \neg \bigcirc_\chi (\sum_{n \in \NN} P(n)) \;\to\; \bigcirc_\chi (\sum_{n \in \NN} P(n))\).
\end{thm}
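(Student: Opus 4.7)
I plan to adapt the classical derivation of Markov's principle from Markov induction on $\nabla_0 \NN$, interpolating $\bigcirc_\chi$ to accommodate the oracle.

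The first step is to observe that $\neg\neg Y$ is always $\bigcirc_\chi$-modal: since each $\chi(a){\downarrow}$ is $\neg\neg$-inhabited by the definition of $\nabla_0 B$, the constant map $\neg\neg Y \to (\chi(a){\downarrow} \to \neg\neg Y)$ admits an inverse (any function into a $\neg\neg$-stable proposition can be collapsed to its common value using $\neg\neg$ of the domain), so $\neg\neg Y$ is $\chi(a){\downarrow}$-null for every $a$. This supplies a canonical factorisation $\bigcirc_\chi Y \to \neg\neg Y$, and hence the hypothesis yields $\neg\neg \sum_n P(n)$. Writing $P^\ast(n) := \neg\neg P(n)$, a strong-induction argument---if no least such $k$ existed, one could prove $\forall n,\, \neg P^\ast(n)$ by induction on $n$, contradicting $\neg\neg \sum_n P^\ast(n)$---gives $\neg\neg \sum_k \bigl( P^\ast(k) \wedge (\forall j < k,\, \neg P^\ast(j)) \bigr)$. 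Together with uniqueness from minimality, this supplies an element $\sigma \in \nabla_0 \NN$ defined by $\sigma \downarrow = k$ iff $P^\ast(k) \wedge (\forall j < k,\, \neg P^\ast(j))$.

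The main step is to apply Markov induction with the motive
\[
  P'(\nu) \;:=\; \prod_{n : \NN} \Bigl( \prod_k (\nu \downarrow = k) \to P^\ast(n+k) \wedge (\forall j < k,\, \neg P^\ast(n+j)) \Bigr) \;\to\; \bigcirc_\chi \sum_{m \geq n} P(m),
\]
whose values are $\bigcirc_\chi$-modal. In the inductive step at $\nu$, given $n$ and the stated hypothesis on $\nu$, I would use $f(n) : \bigcirc_\chi \dec(P(n))$ to case-split on $\dec(P(n))$, which is legitimate because the target is $\bigcirc_\chi$-modal. The case $P(n)$ returns $\eta(n, p)$ directly. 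In the case $\neg P(n)$ we have $\neg P^\ast(n)$, which forces any witness of $\nu {\downarrow}$ to be positive, so the shift $\mu := \{k : \nu \downarrow = k+1\}$ lies in $\nabla_0 \NN$, satisfies $\issuc(\mu, \nu)$, and carries the required hypothesis of $P'(\mu)$ at $n+1$; applying the inductive hypothesis and weakening $\sum_{m \geq n+1} P(m) \to \sum_{m \geq n} P(m)$ closes this case.

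Finally, evaluating the section produced by Markov induction at $\nu = \sigma$ and $n = 0$ yields $\bigcirc_\chi \sum_n P(n)$, since the premise on $\sigma$ at $n=0$ is built into the definition of $\sigma$. I expect the most delicate step to be the bookkeeping around the shift $\mu$: checking that it truly lives in $\nabla_0 \NN$ (needing the observation that $\nu \downarrow = 0$ would give $P^\ast(n)$ and contradict the case assumption), that $\issuc(\mu, \nu)$ holds, and that the reindexing $n \mapsto n+1$ preserves the isDist hypothesis. An alternative that first replaces $P$ by an $\Omegann$-valued ``classical version'' via $f$ would be slightly cleaner but tacitly requires $B$ to be $\neg\neg$-separated (so that $\Omegann$ is $\bigcirc_\chi$-modal); routing through $P^\ast$ avoids any such hypothesis.
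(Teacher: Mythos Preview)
Your argument is correct and follows the same overall strategy as the paper: reduce the hypothesis to \(\neg\neg\sum_n P(n)\) (using that \(\neg\neg\)-propositions are \(\bigcirc_\chi\)-null), package the least witness as an element of \(\nabla_0\NN\), and run Markov induction with a \(\bigcirc_\chi\)-modal motive whose inductive step case-splits via \(f\) and passes to the predecessor in the negative branch.  The only difference is packaging: the paper first isolates the reusable lemma that for every \(\nu:\nabla_0\NN\) one has \(\bigl(\prod_n \bigcirc_\chi\dec(\nu\downarrow=n)\bigr)\to\bigcirc_\chi(\nu{\downarrow})\), and then instantiates it at the least-\(n\) element (checking that ``\(n\) is least with \(P(n)\)'' is \(\bigcirc_\chi\)-decidable from \(f\)), whereas you thread \(P\) and the offset \(n\) directly through the motive.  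The paper's factoring is a bit more modular and avoids your reindexing bookkeeping, but the mathematical content is the same.
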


\begin{proof}
  We first show by \(\markovind\) that for all \(\nu : \nabla_0 \NN\), we have \(\prod_{n : \NN} \bigcirc_\chi \dec(\nu \downarrow= n) \,\to\, \bigcirc_\chi (\nu {\downarrow})\). Suppose that the statement holds for \(\mu\) where \(\nu\) is a successor of \(\mu\) and that \(\prod_{n : \NN} \bigcirc_\chi \dec(\nu \downarrow= n)\). Internal to the modality \(\bigcirc_\chi\), we split into two cases depending on whether \(\nu \downarrow= 0\). If \(\nu \downarrow= 0\), then we are done. If not, then we can show \(\nu\) is the successor of a unique \(\mu : \nabla_0 \NN\). Since \(\mu \downarrow= n\) if and only if \(\nu \downarrow= n + 1\) we have \(\prod_{n : \NN} \bigcirc_\chi \dec(\mu \downarrow= n)\), so \(\bigcirc_\chi (\mu {\downarrow})\) by the inductive hypothesis, and so \(\bigcirc_\chi (\nu {\downarrow})\).

  Finally, given \(P : \NN \to \univ\) and \(f : \prod_{n : \NN} \bigcirc_\chi \dec(P(n))\), we apply the above with \(\nu\) such that \(\nu \downarrow= n\) when \(n\) is least such that \(P(n)\). To do this, we need to show that we can decide, internally in \(\bigcirc_{\chi}\)-modal types, whether a given \(n\) is least such that \(P(n)\) is true. However, this is simply an internal version of the standard fact that if \(P(n)\) is decidable for all \(n\), then it is also decidable whether a given \(n\) is least such that \(P(n)\).
\end{proof}

\begin{thm}
  \label{thm:markovindinternal}
  If \(\markovind\) holds in HoTT, then it also holds in the reflective subuniverse of \(\bigcirc_{\chi}\)-modal types, for any oracle \(\chi : A \to \nabla_{0} B\).
\end{thm}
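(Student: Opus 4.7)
The plan is to apply the ambient $\markovind$ essentially verbatim. Given a family $P : \nabla_0 \NN \to \univ_{\bigcirc_\chi}$ and a successor-step term $h : \prod_{\nu : \nabla_0 \NN} (\prod_{\mu : \nabla_0 \NN} \issuc(\mu, \nu) \to P(\mu)) \to P(\nu)$ in the subuniverse, we can forget that the values of $P$ are modal and regard $P$ as a family $\nabla_0 \NN \to \univ$. The ambient $\markovind$ then yields $s : \prod_{\nu : \nabla_0 \NN} P(\nu)$, and since each $P(\nu)$ is already $\bigcirc_\chi$-modal by assumption, $s$ is automatically a valid section internal to the subuniverse.

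To justify that this really proves $\markovind$ internal to $\univ_{\bigcirc_\chi}$, one must check that the data appearing in the statement -- namely $\NN$, $\Omegann$, $\nabla_0 \NN$, and the relation $\issuc$ -- agrees with whatever the subuniverse computes. Since $\bigcirc_\chi$ is topological (so each $\chi(a){\downarrow}$ is a $\neg\neg$-dense proposition, whence $\nabla_0$-connected, and therefore $\bigcirc_\chi \leq_T \nabla_0$ by Proposition~\ref{prop:nullmodalityordering}), every $\nabla_0$-modal type is $\bigcirc_\chi$-modal. In particular $\Omegann \cong \nabla_0 2$ is $\bigcirc_\chi$-modal by Theorem~\ref{thm:twosoclassifier}, and the subuniverse is closed under $\Pi$, $\Sigma$, and identity types. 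Provided $\NN$ itself is $\bigcirc_\chi$-modal, the explicit definition of $\nabla_0 \NN$ as a $\Sigma$-type over $\NN \to \Omegann$ cut out by $\neg\neg$-stable conditions is built entirely from modal ingredients, so it coincides with the internal $\nabla_0 \NN$. The relation $\issuc(\mu,\nu)$, being a $\Pi$-type into a $\neg\neg$-stable proposition, is likewise already modal.

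The main obstacle is the identification of the internal $\NN$ with the ambient $\NN$, i.e. showing that $\NN$ is $\bigcirc_\chi$-modal. This is the genuine content: it reduces to the fact that for a proposition $P$ with $\neg\neg P$, any map $P \to \NN$ extends uniquely to a constant, which one can extract using decidability of equality on $\NN$ together with $\neg\neg$-stability of the target, or equivalently obtain from the general principle that nullifications at $\neg\neg$-dense propositions preserve decidable sets. Once this identification is in hand, the transfer described in the first paragraph is immediate, and the computation rule -- which is simply an equation between values of $s$ in a $\bigcirc_\chi$-modal type -- transfers by the same route.
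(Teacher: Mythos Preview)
Your proposal contains a genuine error: the claim that \(\NN\) is \(\bigcirc_{\chi}\)-modal is false whenever the oracle is nontrivial. If it held, we would have \(\bigcirc_{\chi}\NN \simeq \NN\), so every map \(\NN \to \bigcirc_{\chi}\NN\) would already land in \(\NN\); combined with relativised Church's thesis this would force the oracle modality to be the identity. Your justification --- that for a \(\neg\neg\)-dense proposition \(P\) every map \(P \to \NN\) extends uniquely to a constant --- is exactly the statement that \(\NN\) is a \(\neg\neg\)-sheaf, i.e.\ \(\NN \simeq \nabla_{0}\NN\). That would collapse Markov induction to ordinary induction on \(\NN\) and contradict the entire setup of the paper; in the cubical-assemblies model \(\nabla_{0}\NN\) is the uniform assembly \(U(\NN)\), not \(\NN\). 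Decidability of equality on \(\NN\) only gives that each fibre of the constant map \(\NN \to \NN^{P}\) is a \(\neg\neg\)-stable proposition; it does not produce a point of that fibre.

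The paper's argument proceeds differently precisely to avoid this. It takes the natural number object in the subuniverse to be \(\bigcirc_{\chi}\NN\), not \(\NN\), and then shows \(\nabla_{0}(\bigcirc_{\chi}\NN) \simeq \nabla_{0}\NN\): since each \(\chi(a){\downarrow}\) is \(\neg\neg\)-dense, every \(\bigcirc_{\chi}\)-connected map is \(\neg\neg\)-connected, so the unit \(\NN \to \bigcirc_{\chi}\NN\) is \(\neg\neg\)-dense and the composite \(\NN \to \bigcirc_{\chi}\NN \to \nabla_{0}\bigcirc_{\chi}\NN\) is \(\nabla_{0}\)-connected into a \(\nabla_{0}\)-modal type; uniqueness of orthogonal factorisations gives the identification. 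After that, the absoluteness of the remaining ingredients (\(\Omegann\), \(\issuc\), and the shape of the hypothesis) goes through much as you sketched, and the external \(\markovind\) can be applied directly.
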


\begin{proof}
  Since every \(\bigcirc_{\chi}\)-connected map is \(\neg\neg\)-dense, the unit map \(\NN \to \bigcirc_{\chi} \NN\) must be \(\neg\neg\)-dense, and so the composition \(\NN \to \bigcirc_{\chi} \NN \to \nabla_{0} \bigcirc_{\chi} \NN\) must be \(\nabla_{0}\)-connected. By uniqueness of factorisations for orthogonal factorisation systems, we can deduce that \(\nabla_{0}\bigcirc_{\chi} \NN \simeq \nabla_{0} \NN\). Note furthermore that the definition of \(\neg\neg\)-sheaf is the same according to the internal logic of \(\bigcirc_{\chi}\)-modal types as externally and that \(\bigcirc_{\chi} \NN\) is the natural number type in the reflective subuniverse for \(\bigcirc_{\chi}\). It follows that applying \(\nabla_{0}\) to the natural number type inside \(\bigcirc_{\chi}\)-modal types is the same as \(\nabla_{0} \NN\) externally.

  Finally, note that the statement \(\prod_{\nu : \nabla_0 \NN} (\prod_{\mu : \nabla_0 \NN} \issuc(\mu, \nu) \to P(\mu)) \to P(\nu)\) is also absolute, by the same reasoning as above, and so we can apply external Markov induction to show every instance of the version internal to \(\bigcirc_{\chi}\).
\end{proof}

\begin{rmk}
  The results of this section appear in the file {\tt OracleModality.agda} of the formalisation.
\end{rmk}

\subsection{Markov induction in cubical assemblies}
\label{sec:markov-induction-1}

In this section we will justify Markov induction by showing it holds in realizability models. As part of this we show how to give the axiom computational meaning using the fixed point theorem.

\begin{lemma}
  \label{lem:uniformn}
  In \(\asm\), \(U(\NN)\) is the \(\neg\neg\)-sheafification of the natural number type, assuming classical logic in the metatheory.
\end{lemma}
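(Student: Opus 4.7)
The plan is to unfold the definition of $\nabla_0 \NN$ from Definition~\ref{defn:dnsheafification} in the internal language of $\asm$ and identify the result directly with $U(\NN)$. Classical logic in the metatheory gives $\Omega_{\neg\neg} \cong U(2)$ in $\asm$ by the discussion just before the statement, so the first step is to compute the exponential $U(2)^{\NN}$. Uniformity of $U(2)$ ensures that every set-theoretic function $\NN \to 2$ is trackable (a constant tracker suffices) and that the exponential itself is uniform, yielding $U(2)^{\NN} \cong U(2^{\NN})$.

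Next I would carve out $\nabla_0 \NN$ as the sub-assembly of $U(2)^{\NN}$ cut out by the two conditions of Definition~\ref{defn:dnsheafification}. Since a sub-assembly of a uniform assembly is again uniform, it only remains to identify its underlying set. Both defining conditions are $\neg\neg$-stable, so under classical logic in the metatheory they reduce to their ordinary external counterparts: condition~(1) becomes ``$\xi$ takes value $1$ somewhere'' and condition~(2) becomes ``$\xi$ takes value $1$ at most once''. Hence global elements of $\nabla_0 \NN$ correspond exactly to characteristic functions of singletons of $\NN$, giving a bijection with $\NN$ by sending each such $\xi$ to its unique witness. Combining this with uniformity yields $\nabla_0 \NN \cong U(\NN)$ as assemblies, and tracing through the identifications shows that the sheafification unit $\NN \to \nabla_0 \NN$ agrees with the canonical morphism from the natural numbers object to $U(\NN)$.

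The main subtlety to handle carefully is the passage from internal to external interpretations of the two conditions, since the internal logic of $\asm$ is not classical. The reduction works because the $\neg\neg$-stable propositions in $\asm$ are classified by the set-theoretic booleans $U(2)$, so their truth values are fully determined by the classical metatheory. Once this is in place, the rest of the computation is essentially a matter of bookkeeping realizers and exploiting uniformity to obtain the assembly structure for free.
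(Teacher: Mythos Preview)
Your argument is correct. The paper's own proof is simply a citation to \cite[Section~2.6.3]{vanoosten}, so your approach is genuinely different: you unfold the explicit description of $\nabla_0$ from Definition~\ref{defn:dnsheafification} and compute it directly in $\asm$, whereas the paper appeals to the standard identification of the $\neg\neg$-subtopos of assemblies with $\mathbf{Set}$ via $U$. Your route has the advantage of being self-contained and of verifying that the paper's concrete construction $\nabla_0$ (rather than an abstract sheafification) really does yield $U(\NN)$, which is exactly what the subsequent Theorem~\ref{thm:miasm} needs. The citation, on the other hand, delivers the stronger statement that $U$ is the full $\neg\neg$-sheafification functor on assemblies, not just its value at $\NN$.

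One small point worth tightening: when you say ``a sub-assembly of a uniform assembly is again uniform'', this is true for the $\Sigma$-type interpretation you are actually using (pairing a realizer of $\xi$ with realizers of the two uniform propositions gives a uniform realizer), but arbitrary monomorphisms into a uniform assembly need not be uniform. It would be cleaner to phrase this as: the two conditions are products and dependent products of double negations, hence interpreted as uniform propositions in $\asm$, so the resulting $\Sigma$-type inherits uniformity from $U(2^{\NN})$.
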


\begin{proof}
  See e.g. \cite[Section 2.6.3]{vanoosten}.
\end{proof}

\begin{thm}
  \label{thm:miasm}
  Markov induction holds in the locally cartesian closed category of assemblies, assuming classical logic in our metatheory.
\end{thm}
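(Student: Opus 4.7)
The plan is to combine the concrete description of $\nabla_0 \NN$ as the uniform assembly $U(\NN)$ from Lemma~\ref{lem:uniformn} with classical well-founded induction in the metatheory and Kleene's second recursion theorem for producing realizers. Externally, elements of $U(\NN)$ correspond to actual natural numbers, all carrying the same (trivial) realizer, and $\issuc(\mu,\nu)$ holds externally exactly when $\nu = \mu+1$, again with only trivial realizers. So Markov induction in $\asm$ becomes: given $P$ over $U(\NN)$ and a tracked $h$, produce $s:\prod_{n\in\NN}P(n)$ together with a single realizer that works uniformly across all $n$.

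First I would define $s$ externally by classical well-founded recursion on $\NN$: set $s(0) := h(0,\varnothing)$ (the empty inductive hypothesis, since no $\mu$ satisfies $\issuc(\mu,0)$) and $s(n+1) := h(n+1,\lambda\mu.\lambda x.\,s(n))$, using classical case analysis to identify the unique predecessor. This is standard in a classical metatheory and does not require any realizer.

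Second, I would construct a uniform realizer $a$ for $s$. Let $e_h$ track $h$. By Kleene's recursion theorem, choose a code $a$ satisfying
\[
 a \;=\; e_h\cdot 0 \cdot \lceil \lambda r.\lambda x.\,a\rceil,
\]
where $0$ is the trivial realizer for any $\nu:U(\NN)$ and $\lceil\lambda r.\lambda x.\,a\rceil$ is the code of the constant function returning $a$ (intended as the realizer of $\lambda\mu.\lambda x.\,s(\mu)$). Then verify by external induction on $n$ that $a$ realizes $s(n)$. For $n=0$, the inductive hypothesis function is vacuously realized because there is no $\mu$ with $\issuc(\mu,0)$, so $e_h\cdot 0\cdot \lceil\lambda r.\lambda x.\,a\rceil$ realizes $h(0,\varnothing) = s(0)$. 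For $n=k+1$, whenever the function is applied to realizers for $\mu = k$ and $\issuc(k,k+1)$ (both trivial), it returns $a$, which by the inductive hypothesis realizes $s(k)$; hence the code realizes $\lambda\mu.\lambda x.\,s(\mu)$, and so $a$ realizes $s(n+1)$.

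The main obstacle is ensuring that the realizer arithmetic is carried out consistently with the assembly structure of the dependent product $\prod_{\mu:\nabla_0\NN}\issuc(\mu,\nu)\to P(\mu)$: one needs that a tracker for this assembly of functions needs only to behave correctly when supplied with realizers of $\mu$ and of $\issuc(\mu,\nu)$, both of which are trivial on $U(\NN)$, so that a constant function is a legitimate realizer. Once this is unpacked, the fixed point produced by the recursion theorem is forced to be total for every external $n$ by the external induction, even though it is not bounded by any syntactic termination argument internal to the realizability model. The computation rule $s(\nu) = h(\nu,\lambda\mu,x.\,s(\mu))$ then holds essentially by definition of $s$ at the external level.
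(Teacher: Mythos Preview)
Your proposal is correct and follows essentially the same approach as the paper: identify $\nabla_0\NN$ with $U(\NN)$, define the section $s$ by external recursion on $\NN$, obtain a single uniform realizer via Kleene's fixed-point theorem, and verify by external induction on $n$ that it realizes $s(n)$. The paper additionally works explicitly relative to an arbitrary context $B$ and remarks that the underlying construction of $s$ is stable under pullback (needed for the lccc interpretation of a dependent statement), but this is a routine point you have merely elided.
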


\begin{proof}
  By Lemma~\ref{lem:uniformn} we can replace \(\nabla_0 \NN\) with \(U(\NN)\) in the statement of Markov induction.
  Suppose that we are working in a context \(B\), and we have a family of types \(P \to B \times U \NN\) and for some \(b \in | B |\) a map \(f : (\prod_{\mu : U \NN} \issuc(\mu, \nu) \to P(b, \mu)) \to P(b, \nu)\). Note that the function underlying the map \(f\) gives us a choice of element \(p_0 : P(b, 0)\), since we vacuously have a unique element of \(\prod_{\mu : U \NN} \issuc(\mu, \nu) \to P(\mu)\). We also have functions \(h(n) : P(b, n) \to P(b, n + 1)\) for each \(n\). This uniquely determines, by recursion on \(\NN\) in our metatheory,\footnote{i.e. in \(\sets\) rather than \(\asm\)} a section \(s\) of \(| P | \to | B | \times \NN\) such that \(s(0) = p_0\) and \(s(n + 1) = h(s(n))\). Note that this construction is automatically stable under pullback. We only need to show how to define a realizer for \(s\) given a realizer for \(b\).

  Suppose that \(d\) is a realizer for \(f\), as above. By the fixed point theorem\footnote{See e.g. \cite[Proposition 1.3.4(ii)]{vanoosten}.} we can find, uniformly in \(d\), a number \(e\) such that \(e 0 \simeq d (\lambda x, y.e 0)\). We verify by external induction on \(\NN\) that \(e\) is a realizer for \(s\), i.e. \(e 0 \Vdash s(n)\) for all \(n\).

  First suppose \(n = 0\). Note that \(\lambda x, y.e 0\) is vacuously a realizer for \(\prod_{\mu : U \NN} \issuc(\mu, 0) \to P(b, \mu)\). Hence \(d (\lambda x, y.e 0)\) is a realizer for \(p_0\) and in particular \(d (\lambda x, y.e 0){\downarrow}\).

  Next suppose \(n = m + 1\). By the inductive hypothesis, \(e 0 \Vdash s(m)\). Hence \(\lambda x, y.e 0 \Vdash \prod_{\mu : U \NN} \issuc(\mu, n) \to P(b, \mu)\). We deduce that \(d (\lambda x, y.e 0)\) is defined and realizes \(s(n)\). It follows that the same is true for \(e 0\).

  Since we can find such an \(e\) uniformly in \(d\), this verifies Markov induction.
\end{proof}

\begin{lemma}
  \label{lem:deltapresnabn}
  In \(\cubasm\), \(\nabla_0 \NN\) is isomorphic to \(\Delta U(\NN)\).
\end{lemma}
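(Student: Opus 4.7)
The plan is to reduce to the analogous identification $\nabla_0^{\asm}(\NN) \cong U(\NN)$ in assemblies, provided by Lemma~\ref{lem:uniformn}, by showing that the explicit construction of $\nabla_0$ from Definition~\ref{defn:dnsheafification} commutes with the constant presheaves functor $\Delta$. Concretely, I would establish
\[
\nabla_0^{\cubasm}(\Delta \NN) \cong \Delta\bigl(\nabla_0^{\asm}(\NN)\bigr),
\]
after which the desired isomorphism follows at once from Lemma~\ref{lem:uniformn}, together with the easy fact that $\Delta\NN$ is the natural number type in $\cubasm$ (since $\Delta$, preserving both limits and colimits, preserves natural number objects).

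The functor $\Delta$ is simultaneously a left adjoint (to the limit functor) and a right adjoint (to the colimit functor), so it preserves all finite limits and colimits. By Theorem~\ref{thm:deltpresdnclass} it sends the $\neg\neg$-stable proposition classifier $U 2$ of $\asm$ to the $\neg\neg$-stable proposition classifier $\Omegann \cong \Delta(U 2)$ of $\cubasm$. The subtype of $\Omegann^{\NN}$ cut out by the two $\neg\neg$-stable conditions in Definition~\ref{defn:dnsheafification} is expressed with only $\Pi$, $\Sigma$, $\neg\neg$, and the identity type on $\NN$, all of which are limits/colimits in types built from $\Omegann$ and $\NN$; hence they are preserved by $\Delta$. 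The only remaining point, and the main obstacle, is the exponential $\Omegann^{\NN}$.

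To overcome this obstacle I would prove $(\Delta U 2)^{\Delta \NN} \cong \Delta\bigl((U 2)^{\NN}\bigr)$ in $\cubasm$ by a Yoneda-style calculation. The $c$-cubes of $(\Delta U 2)^{\Delta \NN}$ form the assembly
\[
\mathrm{Hom}_{\cubasm}\bigl(\mathrm{y}(c) \times \Delta \NN,\, \Delta U 2\bigr) \;\cong\; \mathrm{Hom}_{\asm}\bigl(\mathrm{colim}(\mathrm{y}(c) \times \Delta \NN),\, U 2\bigr),
\]
where the second step uses the adjunction $\mathrm{colim} \dashv \Delta$. A short currying argument gives the projection formula $\mathrm{colim}(F \times \Delta A) \cong \mathrm{colim}(F) \times A$, which combined with $\mathrm{colim}(\mathrm{y}(c)) \cong 1$ yields $\mathrm{colim}(\mathrm{y}(c) \times \Delta \NN) \cong \NN$. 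Hence the underlying set of $c$-cubes is $(U 2)^{\NN}$, and since $U 2$ is uniform so is $(U 2)^{\NN}$, which forces the transition maps between cubes to be trivial, exhibiting the whole presheaf as $\Delta\bigl((U 2)^{\NN}\bigr)$.

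With this exponential identification in hand, every step in the construction of $\nabla_0 \NN$ inside $\cubasm$ matches the corresponding step in $\asm$ under $\Delta$, so $\nabla_0 \NN \cong \Delta(\nabla_0^{\asm}(\NN)) \cong \Delta(U \NN)$ by Lemma~\ref{lem:uniformn}, as required.
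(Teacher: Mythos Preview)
Your proposal is correct and follows essentially the same approach as the paper: both argue that the explicit construction of $\nabla_0$ in Definition~\ref{defn:dnsheafification} uses only the classifier $\Omegann$, dependent sums and products, identity types, and $\NN$, all of which $\Delta$ preserves, and then invoke Lemma~\ref{lem:uniformn}. The paper simply asserts that $\Delta$ preserves the relevant type formers (citing Theorem~\ref{thm:deltpresdnclass} for $\Omegann$), whereas you additionally spell out the exponential case $(\Delta U2)^{\Delta\NN}\cong\Delta((U2)^{\NN})$ via the $\mathrm{colim}\dashv\Delta$ adjunction and $\mathrm{colim}(y(c))\cong 1$; this extra detail is sound and fills in a step the paper leaves implicit.
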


\begin{proof}
  Observe that Definition \ref{defn:dnsheafification} uses only the classifier for \(\neg\neg\)-propositions, the standard type formers of dependent sums, products and identity types, and the natural number type. However, all of this is preserved by \(\Delta\).\footnote{Recall Theorem \ref{thm:deltpresdnclass} for the case of the classifier for \(\neg\neg\)-propositions.}
\end{proof}

\begin{thm}
  Markov induction holds in the interpretation of cubical type theory in cubical assemblies.
\end{thm}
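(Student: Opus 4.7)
The plan is to lift Theorem~\ref{thm:miasm} from $\asm$ to $\cubasm$ via the constant presheaves functor $\Delta$, leveraging Lemma~\ref{lem:deltapresnabn} which identifies $\nabla_{0} \NN$ in $\cubasm$ with $\Delta U(\NN)$. Since the ``is successor'' relation is built from type formers and the $\neg\neg$-classifier, all of which are preserved by $\Delta$ (using Theorem~\ref{thm:deltpresdnclass} for the latter), the relation $\issuc$ on $\nabla_{0} \NN$ in $\cubasm$ is the image under $\Delta$ of the corresponding relation on $U(\NN)$ in $\asm$, so the shape of the induction principle matches cube by cube.

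My approach is to work pointwise on cubes. Given a context $\Gamma$, a family $P : \Gamma \times \nabla_{0} \NN \to \univ$, and an inductive step $h$ in $\cubasm$, for each cube $I \in \square$ restriction yields a corresponding problem over the assembly $\Gamma(I) \times U(\NN)$. Theorem~\ref{thm:miasm} then produces a section $s(I)$ together with a realizer $e(I)$ built from a realizer $d(I)$ for $h(I)$ by the fixed-point theorem. Crucially, the underlying set-theoretic function defining $s(I)$ is determined purely by metatheoretic recursion on $\NN$ from an element $p_0 \in P(I)(0)$ and successor maps $P(I)(n) \to P(I)(n+1)$, so it is essentially independent of $I$; only the realizer depends on $I$.

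The main obstacle I expect is showing that these cube-wise sections actually assemble into a morphism in $\cubasm$, i.e.\ that they are natural in $I$. This should follow from the uniformity of the fixed-point theorem: $e(I)$ is obtained from $d(I)$ by a realizer construction with no additional cube-dependent input, so the naturality of $d$ (which is part of the data of $h$ being a morphism in $\cubasm$) carries over to $e$. The external induction on $\NN$ verifying the realizability claim concerns only realizer behaviour and so transfers unchanged. As with Theorem~\ref{thm:miasm}, we continue to rely on classical logic in the metatheory.
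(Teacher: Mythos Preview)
Your approach matches the paper's: use Lemma~\ref{lem:deltapresnabn} to replace $\nabla_0 \NN$ with $\Delta U(\NN)$, apply Theorem~\ref{thm:miasm} cube by cube, and assemble the resulting sections into a natural transformation. One simplification worth noting: the paper obtains naturality directly from the remark in the proof of Theorem~\ref{thm:miasm} that the underlying section is \emph{uniquely} determined by metatheoretic recursion on $\NN$ and is stable under pullback, so the naturality squares commute at the level of underlying functions automatically; there is no need to invoke uniformity of the fixed-point construction on realizers, since a morphism in $\cubasm$ only requires each component to be tracked, not that the trackers cohere across cubes.
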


\begin{proof}
  By Lemma~\ref{lem:deltapresnabn} we may replace \(\nabla_0 \NN\) with \(\Delta U(\NN)\) in the statement of Markov induction.
  Given a fibration \(P \rightarrow B \times \Delta(\nabla_0 \NN)\) together with a natural transformation \(f : (\prod_{\mu : \nabla_0 \NN} \issuc(\mu, \nu) \to P(b, \mu)) \to P(b, \nu)\), for each cube \([k] \in \square\) we can restrict \(P\) and \(f\) to get \(P_k \to \nabla_0 \NN\) and \(f_k : (\prod_{\mu : \nabla_0 \NN} \issuc(\mu, \nu) \to P_k(\mu)) \to P_k(\nu)\). Applying \(\markovind\) in \(\asm\) gives us a family of sections \(s_k\) for each \(P_k\). Furthermore, as remarked in the proof of Theorem~\ref{thm:miasm}, we can ensure the \(s_k\) are uniquely determined, and so they do give us a natural transformation \(s\), which is a section of \(P \to B \times \Delta(U \NN)\).
\end{proof}

\subsection{Markov induction in synthetic computability theory}
\label{sec:some-synth-comp}

Below we give a few simple examples to illustrate the use of Markov induction via relativised Markov's principle in synthetic computability theory. In each case we need to construct a function computable from an oracle that at some point carries out an unbounded search. Markov induction allows us to use non constructive reasoning to show that this gives a well defined function that is computable from the oracle. In particular, under the appropriate conditions we can assume that particular elements of \(\nabla_0 \NN\) are defined. Since we are working synthetically, we simultaneously construct the function and show that it has a desired property.

\begin{defn}
  If \(\chi : A \to \nabla_0 B\), the \emph{graph} of \(\chi\) is the function \(\overline{\chi} : A \times B \to \nabla_0 2\) defined as below:
  \begin{displaymath}
    \overline{\chi}(a, b) :=
    \begin{cases}
      1 & \chi(a) \downarrow= b \\
      0 & \text{otherwise}
    \end{cases}
  \end{displaymath}
\end{defn}

\begin{thm}
  For \(\chi : \NN \to \nabla_0 \NN\) we have \(\chi \equiv_T \overline{\chi}\).
\end{thm}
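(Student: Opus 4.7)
The plan is to establish both $\overline{\chi} \leq_T \chi$ and $\chi \leq_T \overline{\chi}$ separately, in each direction applying Proposition~\ref{prop:nullmodalityordering} to reduce Turing reducibility to the connectedness of an explicit family of domain propositions. Because $\NN$ and $2$ are $\neg\neg$-separated sets, each downarrow appearing below is a genuine proposition, and since each oracle modality is topological hence lex, its modal reflection is again a proposition; in both directions it therefore suffices to show mere inhabitation of the relevant modalised domain.

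For $\overline{\chi} \leq_T \chi$, I would work internally in $\bigcirc_\chi$. The nullification at the family $a \mapsto \chi(a){\downarrow}$ directly yields some $m : \NN$ with $\chi(a) \downarrow= m$, and decidability of equality on $\NN$ then gives $\dec(\chi(a) \downarrow= b)$ for any $b : \NN$. Applying Lemma~\ref{lem:defbycases} to the case construction of $\overline{\chi}(a, b)$ yields $\overline{\chi}(a, b){\downarrow}$ in either branch, establishing the required connectedness.

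The main obstacle will be the converse $\chi \leq_T \overline{\chi}$, where we need $\bigcirc_{\overline{\chi}}(\chi(a){\downarrow})$: knowing the graph values $\overline{\chi}(a, n)$ individually for each $n$ is not enough to constructively produce the witness $n$ such that $\chi(a) \downarrow= n$, since an unbounded search is required. This is precisely the situation relativised Markov's principle (Theorem~\ref{thm:relmarkov}) is designed for. I would apply it to the family $P(n) := (\chi(a) \downarrow= n)$, a $\neg\neg$-stable proposition because $\nabla_0 \NN$ is $\neg\neg$-separated. Two hypotheses must be checked: (i) $\prod_n \bigcirc_{\overline{\chi}}\bigl(\dec(P(n))\bigr)$, and (ii) $\neg\neg \sum_n P(n)$. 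For (i), the nullification property gives $\bigcirc_{\overline{\chi}}(\overline{\chi}(a, n){\downarrow})$, so internally we obtain $v : 2$ with $\overline{\chi}(a, n) \downarrow= v$; the case $v = 1$ forces $P(n)$, since assuming $\neg P(n)$ we would also have $\overline{\chi}(a, n) \downarrow= 0$ by construction, contradicting $0 \neq 1$ via the uniqueness-up-to-$\neg\neg$ clause of Definition~\ref{defn:dnsheafification}, and the case $v = 0$ symmetrically yields $\neg P(n)$. For (ii), $\neg\neg\sum_n P(n)$ is precisely the $\neg\neg$-density of the domain of $\chi(a)$, immediate from Definition~\ref{defn:dnsheafification}(1). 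Theorem~\ref{thm:relmarkov} then upgrades these hypotheses to $\bigcirc_{\overline{\chi}}(\sum_n P(n))$, which is equivalent to $\bigcirc_{\overline{\chi}}(\chi(a){\downarrow})$, finishing the argument.
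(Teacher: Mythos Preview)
Your proposal is correct and follows essentially the same approach as the paper: the easy direction $\overline{\chi} \leq_T \chi$ is handled by querying $\chi(a)$ and using decidable equality on $\NN$, while the converse invokes relativised Markov's principle (Theorem~\ref{thm:relmarkov}) with $P(n) := (\chi(a) \downarrow= n)$, verifying $\bigcirc_{\overline{\chi}}$-decidability of $P$ via the graph oracle and the $\neg\neg$-density hypothesis via the definition of $\nabla_0$. Your write-up is slightly more explicit than the paper's in invoking Proposition~\ref{prop:nullmodalityordering} and the uniqueness clause of Definition~\ref{defn:dnsheafification}, but the argument is the same.
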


\begin{proof}
  It is straightforward to show \(\overline{\chi} \leq_T \chi\): Given \(n, m : \NN\), we have \(\bigcirc_\chi \chi(n) {\downarrow}\). Working internally to \(\bigcirc_\chi\)-modal types, we can decide whether \(\chi(n) \downarrow= m\). If it does, then \(\overline{\chi}(n, m) \downarrow= 1\). Otherwise, \(\overline{\chi}(n, m) \downarrow= 0\).

  We now show that \(\chi \leq_T \overline{\chi}\). Suppose that we are given \(n : \NN\). We will use relativised Markov's principle, Theorem~\ref{thm:relmarkov}, to find an element of \(\bigcirc_{\overline{\chi}} (\sum_{m : \NN} (\chi(n) \downarrow= m))\). It suffices to show \(\bigcirc_{\overline{\chi}} \dec(\chi(n) \downarrow= m)\) and \(\neg\neg (\bigcirc_{\overline{\chi}} (\sum_{m : \NN} \chi(n) \downarrow= m))\). For the former, given \(m : \NN\) we have \(\bigcirc_{\overline{\chi}}\overline{\chi}(n, m) {\downarrow}\). Working internally in \(\bigcirc_{\overline{\chi}}\)-modal types, we split into two cases depending on whether \(\overline{\chi}(n, m) \downarrow= 1\) or \(\overline{\chi}(n, m) \downarrow= 0\). In the former case we have \(\chi(n) \downarrow= m\) and in the latter case we have \(\neg (\chi(n) \downarrow= m)\), as required. Finally, we have by the definition of \(\nabla_0\) the double negation of \(\chi(n) {\downarrow}\), which directly gives us \(\neg\neg (\bigcirc_{\overline{\chi}} (\sum_{m : \NN} \chi(n) \downarrow= m))\).
\end{proof}

Although we defined oracles to be functions \(\NN \to \nabla_0 2\), following a common convention in computability theory, we can see from the above theorem that in fact we can consider all oracles \(\NN \to \nabla_0 \NN\) and obtain the same class of modalities, and so same definition of Turing degree.

\begin{thm}
  Every surjective function \(f : \nabla_0 \NN \to \nabla_0 \NN\) has a section \(s : \nabla_0 \NN \to \nabla_0 \NN\) with \(s \leq_T f\). If \(f\) is a bijection with inverse \(f^{-1}\), then \(f^{-1} \equiv_T f\).
\end{thm}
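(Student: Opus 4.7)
The plan is to define the section $s$ canonically as picking the least natural preimage. Writing $\eta : \NN \to \nabla_0 \NN$ for the unit, for $\nu : \nabla_0 \NN$ I set $s(\nu)(n) := (f(\eta(n)) = \nu) \wedge \prod_{n' < n} \neg(f(\eta(n')) = \nu)$, viewed as a $\neg\neg$-stable subset of $\NN$. Each clause is $\neg\neg$-stable because $\nabla_0 \NN$ is $\neg\neg$-separated, so $s(\nu)$ lives in $\Omegann^\NN$. The leastness clause makes the ``at most one'' condition in Definition~\ref{defn:dnsheafification} hold strictly, and non-emptiness up to $\neg\neg$ comes from surjectivity of $f$: this gives $\neg\neg\sum_\mu f(\mu) = \nu$, and since $\neg\neg\, \mu{\downarrow}$ and $\mu\downarrow= n$ forces $\mu = \eta(n)$, we obtain $\neg\neg\sum_n f(\eta(n)) = \nu$, from which the $\neg\neg$-level least-number principle produces $\neg\neg\sum_n s(\nu)(n)$. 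To check $f \circ s = \mathrm{id}$, $\neg\neg$-separatedness of $\nabla_0 \NN$ reduces the task to proving the double negation, and under the hypothesis $s(\nu) \downarrow= n$ we have $s(\nu) = \eta(n)$ and hence $f(s(\nu)) = \nu$.

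The main content is $s \leq_T f$. Via the equivalence between oracles $\NN \to \nabla_0 \NN$ and functions $\nabla_0 \NN \to \nabla_0 \NN$, I treat $f$ and $s$ as the oracles $\chi_f := f \circ \eta$ and $\chi_s := s \circ \eta$. Proposition~\ref{prop:nullmodalityordering} reduces $\bigcirc_{\chi_s} \leq_T \bigcirc_{\chi_f}$ to showing $\bigcirc_{\chi_f}(s(\eta(n)){\downarrow})$ for every $n : \NN$, the target being a proposition because of leastness. For this I apply relativised Markov's principle (Theorem~\ref{thm:relmarkov}) to the oracle $\chi_f$ with $P(m) := (f(\eta(m)) = \eta(n))$. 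The decidability premise $\bigcirc_{\chi_f}\dec(P(m))$ holds because $\bigcirc_{\chi_f}(f(\eta(m)){\downarrow})$ lets me work internally in $\bigcirc_{\chi_f}$-modal types with some $k : \NN$ satisfying $f(\eta(m)) = \eta(k)$, reducing equality with $\eta(n)$ to the decidable $k = n$; the double-negation premise follows from surjectivity of $f$ exactly as above. This produces $\bigcirc_{\chi_f}\sum_m f(\eta(m)) = \eta(n)$, and a bounded search inside $\bigcirc_{\chi_f}$-modal types (using the same decidability) upgrades this to the existence of a least such $m$, which is precisely $\bigcirc_{\chi_f}(s(\eta(n)){\downarrow})$.

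For the bijection case, uniqueness of inverses forces $s = f^{-1}$, yielding $f^{-1} \leq_T f$; applying the same construction to $f^{-1}$, which is also surjective, produces a section with $(f^{-1})^{-1} = f$ and hence $f \leq_T f^{-1}$, so $f \equiv_T f^{-1}$. The chief obstacle is exactly the unbounded search needed to locate $s(\eta(n))$ through queries to $f$: classically the preimage exists but there is no computable bound on it, so only a genuinely non-constructive, relativised unbounded search can succeed. This is precisely what relativised Markov's principle, and hence Markov induction, is designed to enable, so once the leastness bookkeeping is in place the proof goes through mechanically.
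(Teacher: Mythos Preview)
Your proof is correct and takes essentially the same approach as the paper: both apply relativised Markov's principle to \(P(m) := (f(\eta(m)) = \eta(n))\), checking decidability by querying \(f\) and the \(\neg\neg\)-existence hypothesis via surjectivity. The only cosmetic difference is that you define the section explicitly as the least preimage and then verify \(\bigcirc_{\chi_f}\)-connectedness of its domain, whereas the paper first builds an element of \(\bigcirc_f(\sum_m f(\iota(m))\downarrow=n)\) and extracts the section via \(\mathtt{erase}\) and Lemma~\ref{lem:stripandrecall}, which spares it your extra bounded search for the least witness.
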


\begin{proof}
  First, since \(\nabla_0\) is a modality, every map \(\NN \to \nabla_0 \NN\) extends uniquely to a map \(\nabla_0 \NN \to \nabla_0 \NN\). It follows that it suffices to construct a map \(s : \NN \to \nabla_0 \NN\) such that for all \(\NN\) we have \(f(s(n)) = \iota (n)\), and that \(s\) is Turing reducible to \(f\). By Lemma~\ref{lem:stripandrecall} it suffices to construct for each \(n : \NN\) an element of \(\bigcirc_f (\sum_{m : \NN} f(\iota(m)) \downarrow= n)\).  We will do this using \(\markov_f\) taking \(P(m) := f(\iota(m))\). To do this, we need to check both \(\prod_{m : \NN} \bigcirc_f \dec(f(\iota(m)) \downarrow= n)\) and \(\neg\neg \sum_{m : \NN} f(\iota(m)) = n\). For the former, first note that we can show \(\bigcirc_f f(\iota(m)) {\downarrow}\). That is, internally in \(\bigcirc_f\)-modal types, we can assume \(f(\iota(m))\) denotes a natural number, say \(f(\iota(m)) \downarrow= l\). Still working internally, we can use decidable equality for \(\NN\) to split into 2 cases depending on whether \(l = n\). If it does, then we are done. If not, we must have \(\neg (f(\iota(m)) \downarrow= n)\) since \(f(\iota(m))\) cannot denote two distinct elements. Finally, to check \(\neg\neg \sum_{m : \NN} f(\iota(m)) = n\), since \(f\) is surjective, there merely exists \(\mu : \nabla_0 \NN\) such that \(f(\mu) = \iota(n)\). It follows that \(f(\mu) \downarrow= n\). By the construction of \(\nabla_0 \NN\) we can show the double negation of the statement that there is \(m\) such that \(\mu = \iota(m)\). From these we derive the double negation of \(\sum_{m : \NN} f(\iota(m)) = n\), as required.
\end{proof}

We next give some useful lemmas extending relativised Markov's principle.

\begin{lemma}[``Parallel search'']
  \label{lem:parallelsearch}
  Suppose we are given two families of types \(P, Q : \NN \to \univ\), such that both are decidable relative to \(\chi\) and \(\neg \neg (\sum_{n : \NN} P n \,+\, \sum_{n : \NN} Q n)\). Then we can deduce \(\bigcirc_\chi (\sum_{n : \NN} P n \,+\, \sum_{n : \NN} Q n)\).
\end{lemma}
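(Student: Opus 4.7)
The plan is to reduce the statement to a single invocation of relativised Markov's principle (Theorem~\ref{thm:relmarkov}), applied to the combined family $R : \NN \to \univ$ given by $R(n) := P(n) + Q(n)$. The intuition is that a parallel search for a witness to $P$ or $Q$ is really just a single unbounded search on the disjoint union, which is exactly what relativised Markov's principle provides once the hypotheses are checked.

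First I would establish the decidability hypothesis $\prod_{n : \NN} \bigcirc_\chi \dec(R(n))$. Fixing $n$, working internally in the reflective subuniverse of $\bigcirc_\chi$-modal types, the hypotheses of the lemma supply $\dec(P(n))$ and $\dec(Q(n))$, and standard case analysis decides $P(n) + Q(n)$, using that $\neg P(n)$ together with $\neg Q(n)$ implies $\neg(P(n) + Q(n))$. Next I would verify the non-emptiness hypothesis $\neg\neg \bigcirc_\chi \sum_{n : \NN} R(n)$. By monotonicity of $\neg\neg$ it suffices to exhibit a map $\sum_{n : \NN} P(n) + \sum_{n : \NN} Q(n) \to \sum_{n : \NN} R(n)$: a witness in the left summand is sent to $(n, \inl(p))$, a witness in the right to $(n, \inr(q))$, and we then compose with $\eta$.

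Applying Theorem~\ref{thm:relmarkov} then delivers $\bigcirc_\chi \sum_{n : \NN} R(n)$, and functoriality of $\bigcirc_\chi$ applied to the distribution map
\[
\sum_{n : \NN} (P(n) + Q(n)) \;\longrightarrow\; \sum_{n : \NN} P(n) \,+\, \sum_{n : \NN} Q(n)
\]
that commutes the $+$ past the $\sum$ finishes the proof. I do not anticipate any real obstacle here: the essential content is just that decidability of $P$ and $Q$ relative to $\chi$ passes to decidability of $P + Q$ relative to $\chi$, which is immediate once one reasons internally in $\bigcirc_\chi$-modal types, and the rest is a direct repackaging of what relativised Markov's principle already provides.
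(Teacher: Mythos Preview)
Your proposal is correct and follows essentially the same route as the paper: define $R(n) := P(n) + Q(n)$, check relativised decidability and the double-negation hypothesis, apply relativised Markov's principle, and then distribute the sum back out. The only difference is cosmetic---you spell out the internal reasoning and the functoriality step a little more explicitly than the paper does.
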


\begin{proof}
  We define \(R : \NN \to \univ\) by \(R(n) := P(n) + Q(n)\). From the decidability of \(P\) and \(Q\) relative to \(\chi\), we deduce that \(R\) is decidable relative to \(\chi\). We clearly have \(\sum_{n : \NN} P n \,+\, \sum_{n : \NN} Q n \to \sum_{n : \NN} R(n)\), and so we can deduce \(\neg\neg \sum_{n : \NN} R(n)\) from our assumptions. We then apply \(\markov_\chi\) to deduce from this \(\sum_{n : \NN} R(n)\), which implies \(\sum_{n : \NN} P n \,+\, \sum_{n : \NN} Q n\).
\end{proof}

\begin{lemma}
  \label{lem:distinguish0}
  Let \(\chi : A \to \nabla_0 B\) be an oracle where \(B\) is \(\neg\neg\)-separated. Suppose we are given \(f, g : \NN \to \bigcirc_\chi X\) such that \(X\) is discrete, and \(f \neq g\). Then we can show \(\bigcirc_\chi (\sum_{n : \NN}  f(n) \neq g(n))\).
\end{lemma}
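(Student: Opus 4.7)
The plan is to apply the relativised Markov's principle (Theorem~\ref{thm:relmarkov}) with $P(n) := f(n) \neq g(n)$, which reduces the problem to verifying the two hypotheses: (i) $\prod_{n : \NN} \bigcirc_\chi \dec(f(n) \neq g(n))$, and (ii) $\neg\neg \bigcirc_\chi (\sum_{n : \NN} f(n) \neq g(n))$.

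I would dispatch (ii) first. Since $\bigcirc_\chi \leq_T \nabla_0$, every $\bigcirc_\chi$-modal type is $\neg\neg$-separated, so in particular equality in $\bigcirc_\chi X$ is $\neg\neg$-stable. If $\neg \sum_{n : \NN} f(n) \neq g(n)$ held, then for each $n$ we would have $\neg\neg(f(n) = g(n))$, hence by stability $f(n) = g(n)$, and function extensionality would give $f = g$, contradicting the hypothesis $f \neq g$. So $\neg\neg \sum_{n : \NN} f(n) \neq g(n)$, and postcomposing with the unit $\eta$ yields (ii).

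For (i), the core observation is that discreteness of $X$ makes equality on $\bigcirc_\chi X$ decidable inside the $\bigcirc_\chi$-modal subuniverse. Since the unit $\eta : X \to \bigcirc_\chi X$ is $\bigcirc_\chi$-connected, for every $u : \bigcirc_\chi X$ the fibre $\sum_{x : X} \eta(x) = u$ is $\bigcirc_\chi$-connected; thus, after $\bigcirc_\chi$-eliminating into the modal target $\bigcirc_\chi \dec(f(n) \neq g(n))$, I may assume I am given $x, y : X$ with $f(n) = \eta(x)$ and $g(n) = \eta(y)$, and decide $x = y$ using discreteness of $X$. The case $x = y$ immediately gives $\neg(f(n) \neq g(n))$. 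In the case $x \neq y$, lex-ness of $\bigcirc_\chi$ identifies $\eta(x) = \eta(y)$ with $\bigcirc_\chi(x = y) \simeq \bigcirc_\chi \emptyset$, and I would show $\bigcirc_\chi \emptyset = \emptyset$ by composing with the erase map $\bigcirc_\chi \emptyset \to \nabla_0 \emptyset$ and noting that $\nabla_0 \emptyset$ is empty (an element would require a non-empty $\neg\neg$-stable subset of $\emptyset$). Hence $\eta(x) \neq \eta(y)$, giving $f(n) \neq g(n)$, which completes (i).

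The main subtlety will be setting up the modal reasoning in (i) carefully: the representatives $x, y$ must be extracted via $\bigcirc_\chi$-elimination into the modal proposition $\bigcirc_\chi \dec(f(n) \neq g(n))$, and the step $x \neq y \Rightarrow \eta(x) \neq \eta(y)$ relies on both lex-ness of $\bigcirc_\chi$ and the identity $\bigcirc_\chi \emptyset = \emptyset$ — both of which trace back to the assumption that $B$ is $\neg\neg$-separated and to the existence of $\erase$. Once these pieces are assembled, the rest of the proof is the standard unbounded-search pattern enabled by $\markovind$ through Theorem~\ref{thm:relmarkov}.
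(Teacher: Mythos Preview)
Your overall strategy matches the paper's exactly: apply relativised Markov's principle (Theorem~\ref{thm:relmarkov}) with \(P(n) := f(n) \neq g(n)\), and verify the two hypotheses. Your argument for (i) is correct and simply spells out what the paper compresses into the phrase ``follows from the fact that \(X\) is discrete and \(\bigcirc_\chi\) is lex.''

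There is, however, a genuine gap in your justification of (ii). You write ``Since \(\bigcirc_\chi \leq_T \nabla_0\), every \(\bigcirc_\chi\)-modal type is \(\neg\neg\)-separated.'' This implication is false: \(\bigcirc_\chi \leq_T \nabla_0\) says that every \(\nabla_0\)-modal type is \(\bigcirc_\chi\)-modal, not the other way round, and it says nothing about \(\neg\neg\)-separation of \(\bigcirc_\chi\)-modal types. A concrete counterexample: if \(\chi\) is computable then \(\bigcirc_\chi\) is the identity modality, so every type is \(\bigcirc_\chi\)-modal, yet certainly not every type is \(\neg\neg\)-separated.

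The fix is to derive \(\neg\neg\)-stability of \(f(n) = g(n)\) from the work you have already done in (i). You established \(\bigcirc_\chi \dec(f(n) = g(n))\); since \(f(n) = g(n)\) is \(\bigcirc_\chi\)-modal (an identity type in a modal type) and \(\neg\neg(f(n)=g(n)) \to f(n)=g(n)\) is therefore modal as well, modal induction reduces this stability claim to the trivial implication \(\dec(P) \to (\neg\neg P \to P)\). With that in hand your contrapositive argument for (ii) goes through unchanged. The paper's proof leaves (ii) implicit for precisely this reason: once one has \(\bigcirc_\chi \dec(w = z)\) for all \(w, z : \bigcirc_\chi X\), both hypotheses of Theorem~\ref{thm:relmarkov} follow.
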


\begin{proof}
  We apply \(\markov_\chi\) with \(P(n) := f(n) \neq g(n)\). To do this we need to show that for all \(w, z : \bigcirc_\chi X\) we have \(\bigcirc_\chi \dec(w = z)\), which follows from the fact that \(X\) is discrete and \(\bigcirc_\chi\) is lex.
\end{proof}

Finally we give a technical lemma generalising Lemma~\ref{lem:distinguish0} that will be used later in Section~\ref{sec:effect-non-comp}.
\begin{lemma}
  \label{lem:distinguish} Suppose we are given \(f, g, h, k : \NN \to \bigcirc_\chi \NN\) such that \(\neg (f = g \times h = k)\). Then we can show \(\bigcirc_\chi (f \neq g + h \neq k)\).
\end{lemma}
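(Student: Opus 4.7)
The plan is to reduce to parallel search (Lemma \ref{lem:parallelsearch}), applied to the families $P(n) := f(n) \neq g(n)$ and $Q(n) := h(n) \neq k(n)$. If both hypotheses of that lemma hold, we obtain $\bigcirc_\chi\bigl(\sum_{n : \NN} P(n) + \sum_{n : \NN} Q(n)\bigr)$; since any witness of $\sum_{n} P(n)$ directly exhibits $f \neq g$ (and similarly for $Q$ and $h \neq k$), functoriality of $\bigcirc_\chi$ then delivers the desired $\bigcirc_\chi(f \neq g + h \neq k)$.

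Verifying the decidability hypothesis is routine and reuses the argument from Lemma \ref{lem:distinguish0}: since $\NN$ is discrete and $\bigcirc_\chi$ is lex, for any $x, y : \bigcirc_\chi \NN$ we have $\bigcirc_\chi \dec(x = y)$. Decidability of a proposition passes to its negation, so $\bigcirc_\chi \dec(P(n))$ and $\bigcirc_\chi \dec(Q(n))$ hold for every $n$.

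The substantive step is establishing $\neg\neg\bigl(\sum_n P(n) + \sum_n Q(n)\bigr)$. Arguing by contradiction, assume the negation; this gives $\prod_n \neg\neg(f(n) = g(n))$ and $\prod_n \neg\neg(h(n) = k(n))$. The key observation is that identity types on $\bigcirc_\chi \NN$ are $\neg\neg$-stable. Indeed, for $x, y : \bigcirc_\chi \NN$ the proposition $x = y$ is $\bigcirc_\chi$-modal by lex-ness, so modal elimination turns the witness of $\bigcirc_\chi \dec(x = y)$ into a map $\dec(x = y) \to (x = y)$ sending $\inl(p)$ to $p$ and deriving absurdity from $\inr(q)$ using $q$ together with the assumed $\neg\neg(x = y)$. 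Applying this pointwise and invoking function extensionality yields $f = g$ and $h = k$, contradicting the hypothesis $\neg(f = g \times h = k)$.

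The main obstacle is the pointwise $\neg\neg$-stability of equality on $\bigcirc_\chi \NN$: $\bigcirc_\chi$-modality of the identity type alone does not give stability under double negation, and the argument must combine it with internal decidability, which is what lex-ness of $\bigcirc_\chi$ buys us from the discreteness of $\NN$. Once this stability is in hand, everything else is straightforward repackaging of relativised Markov's principle, as in the preceding lemmas of this section.
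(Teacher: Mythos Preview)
Your proposal is correct and follows essentially the same approach as the paper: apply parallel search with \(P(n) := f(n) \neq g(n)\) and \(Q(n) := h(n) \neq k(n)\), obtaining decidability from discreteness of \(\NN\) and lexness of \(\bigcirc_\chi\) exactly as in Lemma~\ref{lem:distinguish0}. You have also spelled out the verification of the \(\neg\neg\)-hypothesis (via \(\neg\neg\)-stability of equality on \(\bigcirc_\chi\NN\), using that \(x = y\) is modal and \(\bigcirc_\chi\dec(x = y)\) holds), which the paper leaves implicit in the phrase ``using the same argument as in Lemma~\ref{lem:distinguish0}''; one small wording quibble is that modal elimination does not produce the map \(\dec(x = y) \to (x = y)\) but rather consumes it, yielding \(\bigcirc_\chi\dec(x = y) \to (x = y)\).
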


\begin{proof}
  We apply Lemma~\ref{lem:parallelsearch} with \(P(n) := f(n) \neq g(n)\) and \(Q(n) := h(n) \neq k(n)\), and using the same argument as in Lemma~\ref{lem:distinguish0}
\end{proof}

\begin{rmk}
  Variants of the above lemmas are proved formally in the files {\tt OracleModalities.agda} and {\tt ParallelSearch.agda} of the Agda formalisation.
\end{rmk}

\section{The axiom of computable choice}
\label{sec:axiom-comp-choice}

We give a strengthened version of Church's thesis, which will be a synthetic version of Hyland's axiom of generalised Church's thesis \cite[Proposition 10.4]{hylandeff}.\footnote{We leave it as an exercise for the reader to prove generalised Church's thesis is equivalent to a statement of the same form as computable choice.} We will call the axiom \emph{computable choice} to emphasise that it implies some special cases of countable choice and to draw an analogy with the axiom of continuous choice \cite[Section XVI.1.3]{beeson85} in intuitionism.
\begin{axiom}
  The axiom of \emph{computable choice} states the following. As for extended Church's thesis we consider a synthetic version, postulating the existence of functions \(\varphi^{k}_{e}\), and using these to define partial functions \(\varphi_{e}\). Suppose we have \(R : \NN \times \NN \to \Omegann\) with \(\neg\neg\)-stable domain. That is,
  \begin{equation}
    \label{eq:compchoicehyp}
    \prod_{n : \NN} \left[\neg \neg \sum_{m : \NN} R n m \;\to\; \left\| \sum_{m : \NN} R n m \right\|\right].
  \end{equation}
  Then there merely exists \(e : \NN\) such that for all \(n : \NN\), if \(\neg\neg \sum_{n : \NN} R n m\) then we have \(\varphi_{e}(n) {\downarrow}\) and \(R n \varphi_{e}(n)\).
\end{axiom}

Note that by allowing arbitrary relations \(R : \NN \times \NN \to \Omegann\) this is stronger than synthetic Church's thesis in two ways:
\begin{enumerate}
\item It allows partial functions, as long as they have \(\neg\neg\)-stable domain, generalising Extended Church's thesis, which can be recovered as the special case where \(R\) is at most single valued.
\item It allows multivalued functions, and so also implies a useful special case of the axiom of choice.
\end{enumerate}

Having a weak form of choice available is especially useful when working with modalities generated by nullification, since the path constructors in the HIT definition can be thought of as a form of quotienting, and so choice can help us deal with maps into the nullification of a type. This will be important in Section~\ref{sec:relat-churchs-thes}.

\begin{prop}
  The axiom of computable choice holds in assemblies.
\end{prop}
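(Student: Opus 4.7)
My task is to exhibit a realizer for the stated axiom in the category $\asm$. I would begin by translating the hypothesis: since in $\asm$ the classifier $\Omegann$ is the uniform assembly $U 2$, a relation $R : \NN \times \NN \to \Omegann$ is just an arbitrary set-theoretic relation on $\NN$ in the metatheory, equipped with the trivial (uniform) realizer. The hypothesis~\eqref{eq:compchoicehyp} then amounts to the existence of a partial computable function $\varphi_d$ such that, for each $n : \NN$ with $\neg\neg \sum_m R\, n\, m$ (equivalently, $\exists m.\ R(n,m)$ in the metatheory), $\varphi_d(n)$ is defined and realizes an element of $\|\sum_m R\, n\, m\|$. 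The key observation to single out is that propositional truncation in $\asm$ is computed as image factorisation, so its realizers are \emph{not} uniform: a realizer of $\|\sum_m R\, n\, m\|$ is precisely a code $\langle m, r \rangle$ where $R(n,m)$ is true and $r$ is any (trivial) realizer of the resulting true proposition.

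Given such $d$, I would then define $\varphi_e(n) := \mathrm{fst}(\varphi_d(n))$, and invoke the $s$-$m$-$n$ theorem to obtain a code $e$ computing this partial function, uniformly in $d$. By construction, $\varphi_e(n) {\downarrow}$ with $R\, n\, \varphi_e(n)$ whenever $\neg\neg \sum_m R\, n\, m$ holds, which verifies the body of the conclusion. The outer propositional truncation $\| \sum_e \ldots \|$ in the statement is then realized by the code of the pair $\langle e, r_0 \rangle$, where $e$ realizes itself as an element of $\NN$ and $r_0$ is any trivial realizer for the $\neg\neg$-stable dependent product that it witnesses.

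The only conceptual point is precisely this non-uniform character of propositional truncation in $\asm$: without it the hypothesis would carry no computational content, and there would be nothing from which to extract the witness $m$. Past that, I do not anticipate a significant obstacle; the argument is essentially the realizability reading of Hyland's generalised Church's thesis in the effective topos \cite[Proposition 10.4]{hylandeff}, and the same computation applies verbatim in $\asm$ because $\sum_m R\, n\, m$ and its propositional truncation have the same realizers in both models.
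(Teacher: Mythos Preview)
Your proposal is correct and is precisely the argument the paper defers to: the paper's own proof is simply ``Similar to \cite[Proposition 10.4]{hylandeff},'' and you have spelled out exactly that realizability argument, with the crucial observation that realizers of \(\|\sum_{m} R\,n\,m\|\) in \(\asm\) are non-uniform and carry the witness \(m\). There is nothing to add.
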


\begin{proof}
  Similar to \cite[Proposition 10.4]{hylandeff}.
\end{proof}

\begin{thm}
  There is a reflective subuniverse of cubical assemblies that satisfies computable choice.
\end{thm}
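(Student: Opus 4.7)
The plan is to reuse the same reflective subuniverse of cubical assemblies that was constructed for \(\ect\) in \cite[Theorem~6.4]{swandnsprop}, and verify that its stronger cousin, computable choice, is absolute between this subuniverse and the underlying category of assemblies. Since \cite[Proposition~10.4]{hylandeff} already gives computable choice in \(\asm\) (as stated immediately above), the real content is a reduction, not a fresh combinatorial argument.

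First, I would recall the reflective subuniverse from \cite[Theorem~6.4]{swandnsprop}. That subuniverse is defined so that, roughly, its small types are those that behave like constant cubical presheaves on an assembly. In particular, the natural number object, the classifier \(\Omegann\) of \(\neg\neg\)-stable propositions (by Theorem~\ref{thm:deltpresdnclass}), and the partial element type constructions used in the statement of computable choice, are preserved by the inclusion \(\Delta : \asm \to \cubasm\) into this subuniverse, essentially by the same argument as in Lemma~\ref{lem:deltapresnabn}. Thus a relation \(R : \NN \times \NN \to \Omegann\) interpreted in this subuniverse comes from a relation \(R^\ast\) in \(\asm\), and its \(\neg\neg\)-stable-domain hypothesis~\eqref{eq:compchoicehyp} transfers along \(\Delta\).

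Second, and this is the step where one has to be most careful, I would verify that propositional truncation in the subuniverse agrees with propositional truncation in \(\asm\) for types coming from \(\asm\). In cubical type theory, truncation is a HIT with path constructors that in principle introduces non-trivial cubical structure, so absoluteness along \(\Delta\) is not automatic. However, the reflective subuniverse used for \(\ect\) is precisely designed to quotient this extra cubical structure away: the reflector turns higher-dimensional cells into identifications, so that \(\| \sum_m R\,n\,m \|\) in the subuniverse corresponds to the assembly-level propositional truncation of \(\sum_m R^\ast n\,m\). Once this is set up, the hypothesis of computable choice in the subuniverse becomes the corresponding hypothesis in \(\asm\), and the conclusion (the mere existence of an index \(e\)) similarly transfers back along \(\Delta\) because the mere existential is itself a \(\neg\neg\)-stable proposition.

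Third, with absoluteness in place I would simply invoke computable choice in \(\asm\) to produce the required code \(e\), and then transport the resulting witness back into the subuniverse via \(\Delta\). The main obstacle is the second step: pinning down precisely the sense in which propositional truncation is absolute between the subuniverse and \(\asm\). In practice one shows that, for types in the image of \(\Delta\) (or more generally for types whose underlying cubical assembly is suitably constant), the HIT truncation is already modal for the chosen modality, and is computed by the assembly-level truncation. With that lemma in hand, everything else is formal bookkeeping analogous to Theorem~\ref{thm:deltpresdnclass} and Lemma~\ref{lem:deltapresnabn}, and the construction goes through verbatim from the \(\ect\) case.
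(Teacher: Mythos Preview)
Your proposal has two genuine gaps, and the paper's argument differs from yours on both points.

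First, the paper does \emph{not} reuse the \(\ect\) subuniverse. It constructs a new reflective subuniverse by the techniques of \cite{uemuraswan}, nullifying at the fibres of a specific map \(\Delta(s) : \Delta(X) \to \Delta(Y)\), where \(X\) and \(Y\) are defined in the internal regular logic of \(\asm\) so that surjectivity of \(s\) is literally the statement of computable choice in assemblies. The resulting subuniverse is chosen precisely to make \(\Delta(s)\) homotopy surjective. You assert that the \(\ect\) subuniverse already suffices, but computable choice is strictly stronger than \(\ect\) (it allows multivalued \(R\)), and you give no argument that the nullification used for \(\ect\) also forces this larger map to be surjective.

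Second, and more seriously, your ``main obstacle''---that propositional truncation in the subuniverse coincides with the regular-logic existential in \(\asm\) on types in the image of \(\Delta\)---is exactly what the paper does \emph{not} attempt to prove. The paper explicitly observes that \(\Delta(Y)\), built with the assembly-level \(\exists\), is ``definitely not the same'' as the object \(Y'\) built with the HoTT truncation \(\|\cdot\|\). Rather than trying to identify them, the paper factors \(\Delta(s)\) as \(t \circ s'\) through \(Y'\), checks internally in HoTT that the projection \(Y' \to \Omegann^{\NN \times \NN}\) is an embedding (whence \(t\) is a homotopy embedding), and then concludes that since \(\Delta(s)\) is surjective in the subuniverse and \(t\) is an embedding, \(s'\) must be surjective---which is exactly computable choice. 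Your claim that ``the reflector turns higher-dimensional cells into identifications'' so that truncations match is not what the reflector is designed to do, and the paper's factorisation trick is there precisely because this absoluteness is not available.
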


\begin{proof}
  In the category of assemblies, we define two objects \(X\) and \(Y\) using the internal language for regular locally cartesian closed categories.
  \begin{align*}
    X &:= \{ (R, e) \in \Omegann^{\NN \times \NN} \times \NN \;|\; \forall_{n : \NN} \neg\neg \exists_{m : \NN} R n m \,\to\, \varphi_{e}(n) {\downarrow} \,\wedge\, R n \varphi_{e(n)} \} \\
    Y &:= \{ R \in \Omegann^{\NN \times \NN} \;|\; \forall_{n : \NN} \neg\neg \exists_{m : \NN} R n m \,\to\, \exists_{m : \NN} R n m \}
  \end{align*}
  Note that we have a canonical map \(s : X \to Y\) and that computable choice precisely states that \(s\) is surjective. In cubical assemblies we now consider the map \(\Delta(s) : \Delta(X) \to \Delta(Y)\). Using the techniques of \cite{uemuraswan} we can construct a reflective subuniverse of cubical assemblies such that \(\Delta(s)\) is homotopy surjective: note that the arguments in \cite[Section 6]{uemuraswan} go through whenever \(A \vdash B(a)\) is a family of discrete types in the locally cartesian closed category of cubical assemblies, not necessarily definable in type theory.

  Next note that \(X\) is constructed using only type operators that are preserved by \(\Delta\). This ensures that \(\Delta(X)\) agrees with the corresponding object constructed in cubical assemblies, i.e.
  \begin{equation*}
    \Delta(X) \cong \sum_{R : \NN \times \NN \to \Omegann} \sum_{e : \NN} \prod_{n : \NN} \neg\neg \sum_{m : \NN} R n m \,\to\, \varphi_{e}(n) {\downarrow} \,\wedge\, R n \varphi_{e(n)}
  \end{equation*}

  On the other hand, the definition of \(Y\) includes an existential quantifier, which makes it definitely not the same as the corresponding object \(Y'\) defined using propositional truncation in the model of HoTT in cubical assemblies.
  \begin{equation*}
    Y' := \sum_{R : \NN \times \NN \to \Omegann} \prod_{n : \NN} \neg\neg \sum_{m : \NN} R n m \,\to\, \left\| \sum_{m : \NN} R n m \right\|
  \end{equation*}
  However, we can factor \(\Delta(s)\) through \(Y'\), and the factorisation commutes with the projection to \(\Omegann^{\NN \times \NN}\).
  \begin{equation*}
    \begin{tikzcd}
      \Delta(X) \ar[dd, "\Delta(s)"] \ar[dr, "s'"] & \\
      & Y' \ar[dl, "t"] \dar["p"] \\
      \Delta(Y) \rar & \Omegann^{\NN \times \NN}
    \end{tikzcd}
  \end{equation*}
  We can show, internally in HoTT that the projection map \(p : Y' \to \Omegann^{\NN \times \NN}\) is injective. It follows that the map \(t\) must be homotopy injective. However, in the reflective subuniverse, we now have that \(\Delta(s) = t \circ s'\) where \(\Delta(s)\) is a (homotopy) surjection and \(t\) is an injection. From this we can deduce that \(s'\) must also be a homotopy surjection. However, this now verifies that computable choice does indeed hold in the reflective subuniverse.
\end{proof}

\section{Relativised computable choice}
\label{sec:relat-churchs-thes}

In this section we will show computable choice is downwards absolute. That is, we will prove that it holds in the reflective subuniverses for oracle modalities, as long as both computable choice and Markov induction hold externally. We will refer to the statement of computable choice in the reflective subuniverse as \emph{relativised computable choice}.

In order to do this, the first step is to show that we can always encode elements of \(\bigcirc_\chi \NN\) as natural numbers. We will use a similar construction to the one used when adding an oracle to a pca \cite[Theorem 1.7.5]{vanoosten}, and also to the synthetic definition of Turing reducibility due to Forster, Kirst and M\"{u}ck \cite{forsterkirstmuck}. The basic idea is that any element of \(\bigcirc_\chi \NN\) is either already an element of \(\NN\) or there is some Turing machine (coded by a natural number) that takes the value of \(\chi(n)\) as input and then returns the code for an earlier constructed element of \(\bigcirc_\chi \NN\). Since this is a coproduct of two countable sets, it is countable. We will eventually write \(\psi\) for the resulting function \(\NN \to \partial \bigcirc_{\chi} \NN\). To get the main results of this section, we will need to have a clear picture of what it means for \(\psi(e)\) to be defined, i.e. to be a total element of \(\bigcirc_{\chi} \NN\). To do this, we will use finite lists of numbers to witness that \(\psi(e)\) is defined. We think of the length of the list as the number of oracle queries needed to complete the computation and each element of the list as the number of steps that the Turing machine takes between each query.
In the below we write finite lists of numbers as \(\vec{k} = [k_1,\ldots,k_n]\) and \(\lst(\NN)\) for the type of all such lists. We fix a bijection \(\theta : \NN \stackrel{\cong}{\to} \NN + (\NN \times \NN)\).

Suppose we are given an oracle \(\chi : \NN \to \nabla_0 \NN\). We define the function \(\psi^\chi : \NN \to \partial \bigcirc_\chi \NN\) as follows. We first define elements \(\psi^\chi_{\vec{k}}(x)\) of \(\bigcirc_\chi (\NN + \{\bot\})\) for each \(x : \NN + (\NN \times \NN)\) and \(\vec{k} : \lst(\NN)\). We define \(\psi^\chi_{\vec{k}}(x)\) by induction on the length \(m\) of the list \(\vec{k}\) as follows. We take \(\psi^\chi_{[]}(x)\) to be \(\inl(n)\) when \(x = \inl(n)\), and otherwise \(\inr(\bot)\). For \(m > 0\), \(\psi^\chi_{[k_1,\ldots,k_m]}(\inl(n))\) is always \(\inr(\bot)\) and we take \(\psi^\chi_{[k_1,\ldots,k_m]}(\inr(n, e))\) to be \(\psi^\chi_{[k_1,\ldots,k_{m-1}]}(\theta(\varphi_e^{k_m}(\chi(n))))\).



We now want to define the element \(\psi^{\chi}(e)\) of \(\partial \bigcirc_{\chi} \NN\). In order to meet the formal definition of \(\partial \bigcirc_{\chi} \NN\) we need a \(\neg \neg\)-stable proposition for the domain, together with an evaluation function \(\psi(e) {\downarrow} \to \bigcirc_{\chi} \NN\). We say \(\psi^{\chi}(e) {\downarrow}\) when we have \(\bigcirc_{\chi} \sum_{\vec{k} : \lst(\NN)} \sum_{n : \NN} \psi^{\chi}_{\vec{k}}(\theta(e)) = \inl(n)\). The evaluation function is given by applying functoriality of \(\bigcirc_{\chi}\) to the relevant projection map. In order to check this is a valid element of \(\partial \bigcirc_{\chi} \NN\), we still need to check that the domain is \(\neg \neg\)-stable, which we do below, using relativised Markov's principle.

\begin{prop}
  \label{prop:psidomstab}
  \[
    \neg \neg \psi^\chi(\theta(e)) {\downarrow} \;\to\;
    \bigcirc_\chi \psi^\chi(\theta(e)) {\downarrow}
  \]
\end{prop}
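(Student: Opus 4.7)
The plan is to apply relativised Markov's principle (Theorem~\ref{thm:relmarkov}) to a suitable predicate on $\NN$. Unfolding the definition of $\psi^\chi(\theta(e)){\downarrow}$, the hypothesis becomes $\neg\neg \bigcirc_\chi \sum_{\vec{k} : \lst(\NN)} \sum_{n : \NN} \psi^\chi_{\vec{k}}(\theta(e)) = \eta(\inl(n))$, while the conclusion, using idempotence of $\bigcirc_\chi$, reduces to the same underlying $\Sigma$-type wrapped by a single $\bigcirc_\chi$. Fix a bijection $\lst(\NN) \times \NN \cong \NN$ and write $m \mapsto (\vec{k}_m, n_m)$; set
\[
  P(m) \;:=\; \bigl(\psi^\chi_{\vec{k}_m}(\theta(e)) = \eta(\inl(n_m))\bigr).
\]
The statement then has exactly the shape of Theorem~\ref{thm:relmarkov} applied to $P$, so it suffices to verify the decidability hypothesis $\prod_{m : \NN} \bigcirc_\chi \dec(P(m))$.

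To verify this hypothesis I would reason internally in the $\bigcirc_\chi$-modal subuniverse and induct on the length of $\vec{k}_m$. For $\vec{k}_m = []$, $\psi^\chi_{[]}(\theta(e))$ evaluates by definition to $\eta(\inl(n'))$ or $\eta(\inr(\bot))$ depending only on the shape of $\theta(e)$, so equality with $\eta(\inl(n_m))$ is decidable outright. For $\vec{k}_m = [k_1,\ldots,k_j]$ with $j \geq 1$, if $\theta(e) = \inl(n')$ then the value is definitionally $\eta(\inr(\bot))$ and decidability is immediate; if $\theta(e) = \inr(n', e')$, I would use the internally available $\bigcirc_\chi \chi(n'){\downarrow}$ to extract the value of $\chi(n')$, then decide whether $\varphi^{k_j}_{e'}(\chi(n'))$ terminates, and apply the inductive hypothesis to the shorter list $[k_1,\ldots,k_{j-1}]$.

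The main obstacle is that the equality we must decide lives in $\bigcirc_\chi(\NN + \{\bot\})$ rather than in $\NN + \{\bot\}$ itself. However, since $\bigcirc_\chi$ is a lex topological modality and $\NN + \{\bot\}$ is a discrete set, its identity types are $\bigcirc_\chi$-modal and reduce, internally, to identity types on a discrete set, keeping the induction above well-defined. Once the decidability hypothesis is established, Theorem~\ref{thm:relmarkov} delivers $\bigcirc_\chi \sum_{m : \NN} P(m)$ from the double-negated hypothesis, which translates back through the bijection to give $\bigcirc_\chi \psi^\chi(\theta(e)){\downarrow}$, as required.
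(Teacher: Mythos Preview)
Your proposal is correct and follows essentially the same route as the paper: both arguments reduce to relativised Markov's principle (Theorem~\ref{thm:relmarkov}) after encoding lists (or list--number pairs) by naturals, and both establish the required $\bigcirc_\chi$-decidability by induction on the length of~$\vec{k}$. The paper's proof is terser---it simply notes that one can decide relative to~$\chi$ which summand of $\NN+\{\bot\}$ the element $\psi^\chi_{\vec{k}}(\theta(e))$ lies in, by induction on~$\vec{k}$---whereas you spell out the unfolding of $\psi^\chi(\theta(e)){\downarrow}$, the idempotence step, and the lexness argument needed to decide equalities in $\bigcirc_\chi(\NN+\{\bot\})$; but the underlying strategy is the same.
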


\begin{proof}
  Note that we have ensured that for all \(\vec{k} : \lst(\NN)\) we can decide, relative to \(\chi\) which component of \(\NN + \{\bot\}\) an element of \(\psi^\chi_{\vec{k}}(\theta(e))\) belongs to and compute the value if it belongs to the \(\NN\) component, by induction on the length of the list \(\vec{k}\). Hence we can apply \(\markov_{\chi}\), via a bijection \(\NN \cong \NN^{\omega}\) to find \(\vec{k}\) such that \(\psi^\chi_{\vec{k}}(\theta(e))\) belongs to \(\NN\), and thereby derive \(\bigcirc_\chi \psi^\chi(\theta(e)) {\downarrow}\).
\end{proof}

\begin{thm}
  \label{thm:htpyisototeq}
  Suppose computable choice. Then for all \(z : \bigcirc_\chi \NN\) there merely exists \(e\) such that \(\psi^\chi(e) \downarrow= z\).
\end{thm}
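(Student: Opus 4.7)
The plan is to perform modal induction on $z : \bigcirc_\chi \NN$, viewed as the nullification of the family $n : \NN \vdash \chi(n){\downarrow}$. Its point constructors are the unit $\eta : \NN \to \bigcirc_\chi \NN$ and, for each $n : \NN$ and each $g : \chi(n){\downarrow} \to \bigcirc_\chi \NN$, an extension point to which every $g(b)$ is identified by a path. Because the conclusion $\|\sum_{e} \psi^\chi(e) \downarrow= z\|$ is a proposition, only the two families of point constructors require explicit treatment.

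For the base case $z = \eta(n)$, take $e := \theta^{-1}(\inl(n))$: the empty-list clause $\psi^\chi_{[]}(\inl(n)) = \inl(n)$ immediately witnesses $\psi^\chi(e) \downarrow= \eta(n)$ with no oracle queries made.

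For the extension case, write $z$ for the extension point of $g : \chi(n){\downarrow} \to \bigcirc_\chi \NN$, so that $z = g(b)$ for any $b : \chi(n){\downarrow}$. The inductive hypothesis supplies, for each such $b$, a merely-existing code $c$ with $\psi^\chi(c) \downarrow= g(b)$. I would apply computable choice to the relation $R(m, c)$ expressing ``if $\chi(n) \downarrow= m$ then $\psi^\chi(c) \downarrow= g(b)$'' (where $b$ is the unique witness of $\chi(n) \downarrow= m$), producing a single code $e' : \NN$ for a partial computable function $\varphi_{e'}$ that on input $m$ returns a code of $g(b)$. Setting $e := \theta^{-1}(\inr(n, e'))$ then codes $z$: one unfolding of $\psi^\chi_{\vec{k}}$ issues a single oracle query at $n$, obtains a value $m$, runs $\varphi_{e'}$ on $m$ for some number of steps $k$, and recurses on $\theta(\varphi_{e'}^{k}(m))$, which by construction codes $g(b) = z$ (the equality holding by the path constructor of the nullification HIT).

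The main obstacle is that computable choice demands $R$ to be $\Omegann$-valued, whereas $\psi^\chi(c) \downarrow= g(b)$ mentions the domain $\psi^\chi(c){\downarrow}$, which is a priori only $\bigcirc_\chi$-modal rather than $\neg\neg$-stable. This is exactly where Markov induction enters, through Proposition~\ref{prop:psidomstab}: it bridges $\neg\neg$ and $\bigcirc_\chi$ for this domain, so that after reformulating the domain as a $\neg\neg$-stable proposition the whole equality $R(m, c)$ lands in $\Omegann$. The same reformulation makes the induction motive $\|\sum_{e} \psi^\chi(e) \downarrow= z\|$ $\neg\neg$-stable, and hence automatically $\bigcirc_\chi$-modal since $\bigcirc_\chi$ is a topological modality whose nullified propositions $\chi(n){\downarrow}$ are $\neg\neg$-dense; this legitimises the modal induction. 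The remaining technical work is the careful coordination of computable choice, Markov induction, and the path-gluing of the nullification so that the code $e$ produced in the extension case genuinely evaluates to $z$.
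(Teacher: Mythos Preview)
Your proposal is correct and follows essentially the same route as the paper: HIT induction on the nullification, base case \(e := \theta^{-1}(\inl(n))\), and in the \(\sup\) case an application of computable choice---using Proposition~\ref{prop:psidomstab} to make the relation \(\Omegann\)-valued---to produce \(e'\) and then set \(e := \theta^{-1}(\inr(n,e'))\). Your closing worry about the motive needing to be \(\bigcirc_\chi\)-modal is unnecessary (and the claim that the motive is \(\neg\neg\)-stable is not obviously justified): you already observed that the motive is a proposition, which is all that is required to make the path constructors of the HIT automatic, so no modality hypothesis on the motive is needed.
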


\begin{proof}
  We prove this by induction on \(z\) using Rijke et al.'s description of \(\bigcirc_\chi \NN\) as a HIT. Since we are proving a proposition, we only need to deal with the point constructors and can ignore the path constructors.

  If \(z\) is of the form \(\eta(n)\), then we can just take \(e := \theta^{-1}(\inl(n))\).

  Next suppose \(z\) is of the form \(\sup(n, f)\) where \(n : \NN\) and \(f : \chi(n) {\downarrow} \to \bigcirc_\chi \NN\). By induction we may assume that whenever \(\chi(n) {\downarrow}\) there merely exists \(d\) such that \(\psi^\chi(d) \downarrow= f(\chi(n))\). Since \(\psi^\chi(d) \downarrow= f(\chi(n))\) is double negation stable by Proposition~\ref{prop:psidomstab}, we can apply computable choice here to show there exists \(e : \NN\) such that whenever \(\chi(n) \downarrow= m\), we have \(\varphi_{e}(m) {\downarrow}\) and \(\psi^\chi(\varphi_{e}(m)) \downarrow= f(\chi(n))\). It follows that \(\psi^\chi(\inr(n, e)) \downarrow= z\), as required.
\end{proof}

\begin{defn}
  Let \(R : \NN \times \bigcirc_{\chi} \NN \to \Omegann\). We say \(e : \NN\) \emph{computes} \(R\) if for all \(n : \NN\), if \(\neg \neg \sum_{m : \NN} R n m\) holds then \(\psi(\varphi_{e}(n)) \downarrow\) and \(R n \psi(\varphi_{e}(n))\).
\end{defn}

\begin{thm}
  \label{thm:relcompchoice}
  Suppose computable choice. Let \(R : \NN \times \bigcirc_{\chi}\NN \to \Omegann\) be a \(\neg \neg\)-stable relation such that for all \(n : \NN\) we have \(\neg \neg \sum_{\mu : \bigcirc_{\chi} \NN} R n \mu \to \| \sum_{\mu : \bigcirc_{\chi} \NN} R n \mu \|\). Then \(R\) is computed by some \(e : \NN\).
\end{thm}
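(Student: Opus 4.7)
The plan is to reduce relativised computable choice to the external axiom by replacing $R$ with a relation on $\NN \times \NN$ using the encoding $\psi^{\chi}$ developed above, then transporting the conclusion back via Theorem~\ref{thm:htpyisototeq}.

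First I would define $R' : \NN \times \NN \to \Omegann$ by
\[
  R'(n, e) := \sum_{d : \psi^{\chi}(e) {\downarrow}} R(n, \psi^{\chi}(e)).
\]
This is a proposition because $\psi^{\chi}(e) {\downarrow}$ is, and it lands in $\Omegann$ because both conjuncts are $\neg\neg$-stable: $R$ by hypothesis, and $\psi^{\chi}(e) {\downarrow}$ because it is defined as a $\bigcirc_{\chi}$-modal proposition and Proposition~\ref{prop:psidomstab} identifies $\bigcirc_{\chi}$-stability with $\neg\neg$-stability for this particular proposition.

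Next I would verify the hypothesis of external computable choice for $R'$. Given $\neg\neg \sum_{e} R'(n, e)$, functoriality of $\neg\neg$ produces $\neg\neg \sum_{\mu : \bigcirc_{\chi} \NN} R(n, \mu)$; the assumption on $R$ then yields $\| \sum_{\mu} R(n, \mu) \|$; finally, Theorem~\ref{thm:htpyisototeq} applied inside the truncation converts such a $\mu$ into an index $e$ with $\psi^{\chi}(e) \downarrow= \mu$, yielding $\| \sum_{e} R'(n, e) \|$. Applying external computable choice to $R'$ now supplies an index $e_{0}$ realising the conclusion for $R'$. To finish, starting from the original hypothesis $\neg\neg \sum_{\mu} R(n, \mu)$, I would upgrade it to $\neg\neg \sum_{e} R'(n, e)$ by pushing Theorem~\ref{thm:htpyisototeq} under the double negation monad (using $\| - \| \to \neg\neg$). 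The conclusion of computable choice at $n$ then unfolds to $\psi^{\chi}(\varphi_{e_{0}}(n)){\downarrow}$ together with $R(n, \psi^{\chi}(\varphi_{e_{0}}(n)))$, which is exactly what it means for $e_{0}$ to compute $R$.

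The main obstacle is the $\neg\neg$-stability check for $R'$: it is the one place in the argument where one genuinely relies on the construction of $\psi^{\chi}$ rather than just its surjectivity, and where relativised Markov's principle (via Proposition~\ref{prop:psidomstab}) earns its keep. Everything else is a mechanical transfer along $\psi^{\chi}$, with Theorem~\ref{thm:htpyisototeq} serving simultaneously to move existentials from $\bigcirc_{\chi} \NN$ down to $\NN$ (to establish the hypothesis of the external axiom) and to absorb the premise of the relativised conclusion.
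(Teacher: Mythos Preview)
Your proposal is correct and follows essentially the same route as the paper: define $R'(n,m) := \psi^{\chi}(m){\downarrow} \times R(n,\psi^{\chi}(m))$, verify that it lands in $\Omegann$ via Proposition~\ref{prop:psidomstab}, establish the hypothesis of external computable choice for $R'$ by passing through the hypothesis on $R$ and invoking Theorem~\ref{thm:htpyisototeq} under the truncation, and then apply computable choice. You are in fact slightly more explicit than the paper at the end, where one must still pass from the premise $\neg\neg\sum_{\mu} R(n,\mu)$ to $\neg\neg\sum_{m} R'(n,m)$ in order to read off the desired conclusion; your use of Theorem~\ref{thm:htpyisototeq} under $\neg\neg$ handles this cleanly.
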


\begin{proof}
  We define a relation \(R' : \NN \times \NN \to \Omegann\) as follows. For \(n, m : \NN\) we define \(R' n m := \psi(m) {\downarrow} \times R n \psi(m)\). Note that \(R'\) maps into \(\neg \neg\)-stable propositions by Proposition~\ref{prop:psidomstab}.

  Now \(\sum_{m : \NN} R' n m\) implies \(\sum_{\mu : \bigcirc_{\chi} \NN} R n \mu\), since we can just take \(\mu := \psi(m)\). Hence from \(\neg \neg \sum_{m : \NN} R' n m\) we can deduce \(\neg \neg \sum_{\mu : \bigcirc_{\chi} \NN} R n \mu\) and thereby \(\| \sum_{\mu : \bigcirc_{\chi} \NN} R n \mu \|\) by assumption. However, we can then apply Theorem~\ref{thm:htpyisototeq} to deduce that there merely exists \(m : \NN\) such that \(R' n m\).

  We can therefore now apply computable choice to \(R'\) to get the result.
\end{proof}

\begin{cor}
  Suppose that \(\chi' \leq_{T} \chi\) for \(\chi, \chi' : \NN \to \nabla \NN\), then there is \(e : \NN\) such that for all \(n : \NN\), we have \(\chi'(n) = \erase(\psi(\varphi_{e}(n)))\).
\end{cor}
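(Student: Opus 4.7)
The plan is to deduce this from relativised computable choice (Theorem~\ref{thm:relcompchoice}) applied to the relation $R : \NN \times \bigcirc_\chi \NN \to \Omegann$ defined by $R(n, \mu) := (\erase(\mu) = \chi'(n))$. This equality is taken in $\nabla_0 \NN$, which is $\neg\neg$-separated, so $R$ indeed lands in $\Omegann$. Once we verify the hypothesis $\| \sum_{\mu : \bigcirc_\chi \NN} R(n, \mu) \|$ for each $n$, Theorem~\ref{thm:relcompchoice} will hand us an $e$ with $\psi(\varphi_e(n)) \downarrow$ and $\erase(\psi(\varphi_e(n))) = \chi'(n)$ (the hypothesis $\neg \neg \sum_\mu R(n,\mu)$ being automatic once the untruncated existence is proved).

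The key step is to construct, for each $n$, an element $\mu_n : \bigcirc_\chi \NN$ satisfying $\erase(\mu_n) = \chi'(n)$. The assumption $\chi' \leq_T \chi$ unfolds by Proposition~\ref{prop:nullmodalityordering} to the statement that each of the nullifying propositions $\chi'(n) \downarrow$ is $\bigcirc_\chi$-connected; in particular $\bigcirc_\chi(\chi'(n) \downarrow)$ is contractible and comes with a canonical inhabitant. Composing the evaluation $\chi'(n) \downarrow \to \NN$ with $\eta : \NN \to \bigcirc_\chi \NN$ and applying functoriality of $\bigcirc_\chi$ to this inhabitant yields $\mu_n$. To check $\erase(\mu_n) = \chi'(n)$ in $\nabla_0 \NN$, observe that this identity type is $\bigcirc_\chi$-modal: $\nabla_0 \NN$ is a $\neg\neg$-sheaf, each $\chi(a) \downarrow$ is a $\neg\neg$-dense proposition, so $\nabla_0 \NN$ is null for the generating family of $\bigcirc_\chi$. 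Reasoning internally to $\bigcirc_\chi$, we may therefore assume $\chi'(n) \downarrow= m$ for some $m : \NN$, whence $\mu_n = \eta(m)$ and $\erase(\mu_n) = \eta(m) = \chi'(n)$ as elements of $\nabla_0 \NN$.

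With $\mu_n$ constructed, the pair $(\mu_n, \mathrm{refl})$ witnesses $\sum_\mu R(n, \mu)$, so the hypothesis of Theorem~\ref{thm:relcompchoice} holds, and invoking the theorem completes the proof. The main subtlety is step two: handling the equation $\erase(\mu_n) = \chi'(n)$ requires the observation that the identity type lives in a $\bigcirc_\chi$-modal type, which in turn uses that $\bigcirc_\chi$ is a topological modality and $\nabla_0$ reflects onto $\neg\neg$-sheaves. Everything else is bookkeeping with the existing machinery.
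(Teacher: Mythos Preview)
Your proposal is correct and follows essentially the same approach as the paper: both apply Theorem~\ref{thm:relcompchoice} to the relation \(R(n,\mu) := (\chi'(n) = \erase(\mu))\), use \(\chi' \leq_T \chi\) to obtain \(\bigcirc_\chi(\chi'(n){\downarrow})\), and exploit that \(R\) takes values in \(\neg\neg\)-stable (hence \(\bigcirc_\chi\)-modal) propositions to produce the required witness. The only cosmetic difference is that the paper packages the construction of \(\mu_n\) and the verification of \(R(n,\mu_n)\) into a single appeal to \(\Sigma\)-preservation for \(\bigcirc_\chi\), whereas you separate them into constructing \(\mu_n\) by functoriality and then checking the equation by modal induction; these are the same argument unwound.
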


\begin{proof}
  We apply Theorem~\ref{thm:relcompchoice} with \(R n \mu := \chi'(n) = \erase(\mu)\). We show the required hypothesis of \(\prod_{n : \NN} \| \sum_{\mu : \bigcirc_{\chi} \NN} R n \mu \|\) as follows. By the assumption \(\chi' \leq_{T} \chi\) we can show \(\bigcirc_{\chi} \chi'(n) {\downarrow}\) for all \(n : \NN\). Recall that \(\chi'(n)\) is implemented as a \(\Sigma\)-type \(\sum_{m : \NN} \chi'(n) \downarrow= m\). Since \(\bigcirc_{\chi}\) preserves \(\Sigma\)-types and \(R n \mu\) is already \(\neg\neg\)-stable, and thereby \(\bigcirc_{\chi}\)-modal, we in fact have a witness of the untruncated \(\sum_{\mu : \bigcirc_{\chi} \NN} R n \mu\).
\end{proof}

\begin{rmk}
  We can see the above corollary as a partial justification for the assertion that if \(\chi' \leq_{T} \chi\) in our sense, then there is an oracle Turing machine that computes \(\chi'\) from \(\chi\). Strictly speaking, a complete proof would also require checking that the partial function \(\psi \circ \varphi_{e}\) can be implemented as an oracle Turing machine when \(\varphi_{e}\) is the \(e\)th computable function, and that every oracle Turing machine appears as such a function.
\end{rmk}

\begin{cor}
  \label{cor:oraclemodtrunccommute}
  Assuming computable choice, for every \(P : \bigcirc_{\chi} \NN \to \Omegann\) the type \(\| \sum_{\nu : \bigcirc_{\chi}\NN} P n \|\) is \(\bigcirc_{\chi}\)-modal.
\end{cor}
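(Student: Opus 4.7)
The plan is to apply Theorem~\ref{thm:relcompchoice} to the constant-in-$n$ relation $R : \NN \times \bigcirc_\chi \NN \to \Omegann$ defined by $R(n, \mu) := P(\mu)$; its conclusion directly supplies a computable witness of $\sum_\nu P(\nu)$, and hence of $\| \sum_\nu P(\nu) \|$.

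The first step is to observe that each $P(\nu)$ is already $\bigcirc_\chi$-modal. Indeed, $P(\nu)$ is a $\neg\neg$-stable proposition and hence $\nabla_0$-modal; and by Proposition~\ref{prop:nullmodalityordering} we have $\bigcirc_\chi \leq_T \nabla_0$, since each nullifying proposition $\chi(a){\downarrow}$ is $\neg\neg$-dense, and thus $\nabla_0$-connected. Consequently $R$ takes values in $\neg\neg$-stable propositions, and $\sum_{\nu : \bigcirc_\chi \NN} P(\nu)$ is itself $\bigcirc_\chi$-modal as a $\Sigma$-type of modal families over the modal base $\bigcirc_\chi \NN$.

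Since $\| \sum_\nu P(\nu) \|$ is a proposition, to show it is $\bigcirc_\chi$-modal it suffices to construct a map $\bigcirc_\chi \| \sum_\nu P(\nu) \| \to \| \sum_\nu P(\nu) \|$. Given $\alpha : \bigcirc_\chi \| \sum_\nu P(\nu) \|$, I would first compose with $\erase$ (which is available since $\NN$ is $\neg\neg$-separated) to obtain $\nabla_0 \| \sum_\nu P(\nu) \|$; since the argument is a proposition, this reduces to $\neg\neg \sum_\nu P(\nu)$. Applying Theorem~\ref{thm:relcompchoice} would then supply $e : \NN$ with $\psi(\varphi_e(n)){\downarrow}$ and $P(\psi(\varphi_e(n)))$, producing a witness of $\sum_\nu P(\nu)$ and hence of $\| \sum_\nu P(\nu) \|$.

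The main obstacle is verifying the Markov-like hypothesis $\forall n, \neg\neg \sum_\mu P(\mu) \to \| \sum_\mu P(\mu) \|$ required by Theorem~\ref{thm:relcompchoice}; this is essentially the content of the corollary itself, so its verification has to be handled with care. I expect to establish it by using Theorem~\ref{thm:htpyisototeq} to transport the sum on $\bigcirc_\chi \NN$ to one over $\NN$, namely $\| \sum_\nu P(\nu) \| \simeq \| \sum_{e : \NN} Q(e) \|$ with $Q(e) := \psi(e){\downarrow} \wedge P(\psi(e))$ (which is $\neg\neg$-stable by Proposition~\ref{prop:psidomstab}), and then invoking computable choice on the $\neg\neg$-stable relation built from $Q$ to obtain a realizer, closing the loop.
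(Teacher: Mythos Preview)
Your plan has a genuine gap at exactly the point you flag. After erasing $\bigcirc_\chi\|\sum_\nu P(\nu)\|$ you are left with only $\neg\neg\sum_\nu P(\nu)$, and from that alone you cannot recover $\|\sum_\nu P(\nu)\|$: this implication is not a consequence of computable choice, and your proposed workaround does not escape the circularity. Rewriting the sum over $\bigcirc_\chi\NN$ as $\|\sum_{e}Q(e)\|$ via Theorem~\ref{thm:htpyisototeq} is fine, but then ``invoking computable choice on the relation built from $Q$'' requires the hypothesis $\neg\neg\sum_e Q(e)\to\|\sum_e Q(e)\|$, which is equivalent to $\neg\neg\sum_\nu P(\nu)\to\|\sum_\nu P(\nu)\|$ --- precisely what you are trying to establish. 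Computable choice is a uniformisation principle: it turns a family of \emph{merely existing} witnesses into a computable selector, but it cannot manufacture the mere existence from a double negation.

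The paper avoids this trap by not passing through $\bigcirc_\chi\|\ldots\|$ at all. It checks the nullness condition directly: given $n_0:\NN$ and $f:\chi(n_0){\downarrow}\to\|\sum_\nu P(\nu)\|$, it must produce an element of $\|\sum_\nu P(\nu)\|$. The key move is to apply Theorem~\ref{thm:relcompchoice} to the relation $R\,m\,\nu := (\chi(n_0)\downarrow=m)\wedge P(\nu)$. The hypothesis of that theorem then becomes: for each $m$, $\neg\neg((\chi(n_0)\downarrow=m)\times\sum_\nu P(\nu))\to\|(\chi(n_0)\downarrow=m)\times\sum_\nu P(\nu)\|$. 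But $\neg\neg(\chi(n_0)\downarrow=m)$ gives $\chi(n_0)\downarrow=m$ by stability, and now you can \emph{evaluate} $f$ to obtain $\|\sum_\nu P(\nu)\|$. The resulting code $e$ yields $f':\chi(n_0){\downarrow}\to\bigcirc_\chi\NN$, which extends to a single $\nu$ by modality of $\bigcirc_\chi\NN$; finally $P(\nu)$ follows by $\neg\neg$-stability of $P$ and $\neg\neg$-density of $\chi(n_0){\downarrow}$. The crucial ingredient you lost by erasing --- the actual map $f$ --- is exactly what makes the hypothesis of Theorem~\ref{thm:relcompchoice} verifiable.
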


\begin{proof}
  Suppose we are given \(n_{0} : \NN\) and a map \(f : \chi(n_{0}) {\downarrow} \to \| \sum_{\bigcirc_{\chi} \NN} P n \|\). We define \(R : \NN \times \bigcirc_{\chi} \NN \to \Omegann\) by \(R m \nu\) when \(\chi(n_{0}) \downarrow= m\) and \(P \nu\). Applying Theorem~\ref{thm:relcompchoice} tells us that there exists \(e : \NN\) such that if \(\chi_{n_{0}} \downarrow= m\), then \(\psi(\varphi_{e}(m)) \downarrow\) and \(P \psi(\varphi_{e}(m))\). Given such an \(e\), we define \(f' : \chi(n_{0}) {\downarrow} \to \bigcirc_{\chi} \NN\) by taking \(f'(m)\) to be \(\psi(\varphi_{e}(m))\). We can now apply the fact that \(\bigcirc_{\chi} \NN\) is modal to find the diagonal filler, which is an element \(\nu\) of \(\bigcirc_{\chi} \NN\) such that if \(\chi(n_{0}) {\downarrow}\), then \(\nu = \psi(\varphi_{e}(m))\). Since \(P\) is \(\neg\neg\)-stable and \(\chi(n_{0}) {\downarrow}\) is \(\neg \neg\)-dense, this implies that we in fact have \(P \nu\). Hence we have an element of \(\| \sum_{\nu : \bigcirc_{\chi} \NN} P \nu \|\), as required.
\end{proof}

\begin{rmk}
  In general we expect that if we are given an accessible modality generated by projective types then \(-1\)-truncation preserves modal types. In this case, we are not able to show in cubical assemblies that the proposition \(\chi(n) {\downarrow}\) is projective, but using computable choice we have used a related argument to show the special case that \(\| \bigcirc_{\chi} \NN \|\) is \(\bigcirc_{\chi}\)-modal.
\end{rmk}

\begin{cor}
  Suppose that both Markov induction and computable choice are true. Then they both hold in the reflective subuniverse corresponding to any oracle modality. That is, they are ``downwards absolute for oracle modalities.''
\end{cor}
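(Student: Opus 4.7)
The plan is to verify each of the two axioms in turn. Markov induction inside the reflective subuniverse for $\bigcirc_\chi$ is precisely the content of Theorem~\ref{thm:markovindinternal}, so nothing new is needed there. The substantive work is for computable choice.

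To handle computable choice in the $\bigcirc_\chi$-subuniverse, first I would carefully unfold what the axiom says internally. Inside the subuniverse the natural number type is $\bigcirc_\chi \NN$, exactly as observed in the proof of Theorem~\ref{thm:markovindinternal}. The internal analogue of the Kleene enumeration $e \mapsto \varphi_e$ is naturally given by $e \mapsto \psi^\chi \circ \varphi_e$, where $\psi^\chi$ is the encoding $\NN \to \partial\bigcirc_\chi \NN$ constructed just before Proposition~\ref{prop:psidomstab}; Theorem~\ref{thm:htpyisototeq} ensures that, up to mere existence, this enumerates all of $\bigcirc_\chi \NN$.

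The key reduction step is the following. Given an internal relation $R : \bigcirc_\chi \NN \times \bigcirc_\chi \NN \to \Omega_{\neg\neg}$ satisfying the subuniverse's hypotheses, precompose with $\eta_\NN : \NN \to \bigcirc_\chi \NN$ in the first argument to obtain an external $R' : \NN \times \bigcirc_\chi \NN \to \Omega_{\neg\neg}$. I would then verify the hypotheses of Theorem~\ref{thm:relcompchoice} for $R'$: $\neg\neg$-stability of the codomain is immediate from the hypothesis on $R$, and the hypothesis $\neg\neg \sum_{\mu : \bigcirc_\chi \NN} R' n \mu \to \| \sum_{\mu : \bigcirc_\chi \NN} R' n \mu \|$ follows from the internal hypothesis on $R$ combined with Corollary~\ref{cor:oraclemodtrunccommute}, which tells us the external truncation of $\sum_{\mu : \bigcirc_\chi \NN} R' n \mu$ is already $\bigcirc_\chi$-modal and therefore agrees with the internal one. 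Theorem~\ref{thm:relcompchoice} then produces an $e : \NN$ with the desired computing property for $R'$. One checks that $\eta(e)$ (viewed internally) witnesses the internal statement for $R$: it works on the image of $\eta$ essentially by construction, and since the statement being witnessed is, as a function of the free variable $\nu : \bigcirc_\chi \NN$, a $\bigcirc_\chi$-modal proposition, it extends from $\NN$ to all of $\bigcirc_\chi \NN$ automatically.

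The main obstacle is book-keeping of internal versus external notions: in particular, making sure that the internal $\Omega_{\neg\neg}$, the internal propositional truncation, and the internal notion of computable function all line up with their external counterparts after interpretation in the subuniverse. The three technical facts that make this alignment work out cleanly are Corollary~\ref{cor:oraclemodtrunccommute} (internal and external truncations agree on suitable $\bigcirc_\chi \NN$-indexed families of $\neg\neg$-stable propositions), Theorem~\ref{thm:htpyisototeq} (every element of $\bigcirc_\chi \NN$ admits an external code under $\psi^\chi$), and the observation that, because $\bigcirc_\chi$ is topological, every $\bigcirc_\chi$-modal proposition is automatically $\neg\neg$-stable, so that $\Omega_{\neg\neg}$ in the subuniverse is essentially the same gadget as externally.
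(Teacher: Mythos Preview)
Your approach is essentially the same as the paper's: Markov induction via Theorem~\ref{thm:markovindinternal}, and computable choice by unfolding the internal statement and reducing it to Theorem~\ref{thm:relcompchoice} via Corollary~\ref{cor:oraclemodtrunccommute}. The paper phrases the reduction slightly differently, defining \(R' n \mu := \prod_{m : \NN} (\mu \downarrow= m \to R\, n\, m)\) rather than precomposing with \(\eta\), but this is a cosmetic difference.

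One minor slip: your claim that ``because \(\bigcirc_\chi\) is topological, every \(\bigcirc_\chi\)-modal proposition is automatically \(\neg\neg\)-stable'' is not correct in general. What holds is the converse: every \(\neg\neg\)-stable proposition is \(\bigcirc_\chi\)-modal (since \(\bigcirc_\chi \leq_T \nabla_0\)). Your conclusion that the internal \(\Omega_{\neg\neg}\) agrees with the external one is still right, but the reason is that negation is absolute (the empty type is modal, so internal and external \(\neg\) coincide) and every externally \(\neg\neg\)-stable proposition is already \(\bigcirc_\chi\)-modal. This does not affect the main argument.
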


\begin{proof}
  We have already seen that Markov induction holds in Theorem~\ref{thm:markovindinternal}. The remainder of the theorem is a proof that computable choice holds internally. By unfolding the interpretation of the statement in the subuniverse, we can see that this almost follows directly from Theorem~\ref{thm:relcompchoice}. The only issue is that the interpretation of the hypothesis \eqref{eq:compchoicehyp} is \(\prod_{n : \NN} \neg\neg \sum_{m : \NN} R n m \to \bigcirc_{\chi} \| \sum_{m : \NN} R n m \|\), whereas we need a statement of the form \(\prod_{n : \NN} \neg\neg \sum_{m : \NN} R' n m \to \| \sum_{\mu : \bigcirc_{\chi} \NN} R' n \mu \|\) to apply Theorem~\ref{thm:relcompchoice}. However, Corollary~\ref{cor:oraclemodtrunccommute} tells us that \(\| \sum_{\mu : \bigcirc_{\chi} \NN} \prod_{m : \NN}\mu \downarrow= m \to R n m \|\) is \(\bigcirc_{\chi}\)-modal, and so we can eliminate out to get a map \(\bigcirc_{\chi} \| \sum_{m : \NN} R n m \| \to \| \sum_{\mu : \bigcirc_{\chi} \NN} \prod_{m : \NN}\mu \downarrow= m \to R n m \|\). We can then apply Theorem~\ref{thm:relcompchoice} with \(R'n \mu := \mu \downarrow= m \to R n m\) to get the result.
\end{proof}

The above Corollary demonstrates that synthetic proofs using Markov induction and computable choice can be ``relativised to an oracle.'' That is, they can be carried out internally in the reflective subuniverse corresponding to an oracle. This corresponds to the concept of relativisation in computability theory, which is the idea that many proofs about computable functions apply equally well to functions computable relative to an oracle.

Although we needed the full strength of computable choice to show that any form of Church's thesis holds in the reflective subuniverse, once we have it we can derive the other, weaker versions, such as relativised versions of extended Church's thesis and Church's thesis itself:
\begin{cor}[Relativised extended Church's thesis]
  \label{cor:rect}
  For any function \(f : \NN \to \partial \bigcirc_{\chi} \NN\) there exists \(e : \NN\) such that for all \(n\), if \(f(n) {\downarrow}\), then \(\psi(\varphi_{e}(n)) \downarrow= f(n)\).
\end{cor}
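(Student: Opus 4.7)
The plan is to apply relativised computable choice (Theorem~\ref{thm:relcompchoice}) to the relation $R : \NN \times \bigcirc_\chi \NN \to \Omegann$ defined by
\[
  R\, n\, \mu := \sum_{d : f(n){\downarrow}} (\mu = g_n(d)),
\]
where $f(n)$ is unpacked as the pair $(f(n){\downarrow}, g_n)$ with $g_n : f(n){\downarrow} \to \bigcirc_\chi \NN$.

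First I would verify that $R$ actually lands in $\Omegann$, i.e., that each $R\, n\, \mu$ is a $\neg\neg$-stable proposition. It is a proposition since $f(n){\downarrow}$ is, and $\bigcirc_\chi \NN$ is a set. The factor $f(n){\downarrow}$ is $\neg\neg$-stable by definition of $\partial$. For $\neg\neg$-stability of the identity type $\mu = g_n(d)$ in $\bigcirc_\chi \NN$, I would combine two observations: identity types on $\bigcirc_\chi \NN$ are $\bigcirc_\chi$-modal (by lex-ness of $\bigcirc_\chi$), and they are $\bigcirc_\chi$-decidable, since $\bigcirc_\chi \NN$ serves as the natural numbers internally to the $\bigcirc_\chi$-subuniverse, where equality is decidable. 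Given $\neg\neg(\mu = \nu)$, one then works internally to $\bigcirc_\chi$ to rule out the negative case of decidability, obtaining $\mu = \nu$.

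Next I would check the hypothesis of Theorem~\ref{thm:relcompchoice}. Swapping the order of summation gives
\[
  \sum_{\mu : \bigcirc_\chi \NN} R\, n\, \mu \;\cong\; \sum_{d : f(n){\downarrow}}\sum_{\mu : \bigcirc_\chi \NN}(\mu = g_n(d)),
\]
and each inner summand is contractible, so the whole type is equivalent to $f(n){\downarrow}$. Hence both $\neg\neg\sum_\mu R\, n\, \mu$ and $\|\sum_\mu R\, n\, \mu\|$ reduce to $f(n){\downarrow}$, and the required implication is trivial.

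Applying Theorem~\ref{thm:relcompchoice} then yields $e : \NN$ such that whenever $f(n){\downarrow}$ we have $\psi(\varphi_e(n)){\downarrow}$ together with $R\, n\, \psi(\varphi_e(n))$. The latter unpacks to a witness $d : f(n){\downarrow}$ and an equality $\psi(\varphi_e(n)) = g_n(d)$ in $\bigcirc_\chi \NN$ (using the witness of $\psi(\varphi_e(n)){\downarrow}$ to read $\psi(\varphi_e(n))$ as its value), which is exactly $\psi(\varphi_e(n)){\downarrow}= f(n)$ as required. The main obstacle is the $\neg\neg$-stability of equality in $\bigcirc_\chi \NN$, needed for $R$ to land in $\Omegann$; this is handled as described by combining $\bigcirc_\chi$-decidability with $\bigcirc_\chi$-modality of identity types.
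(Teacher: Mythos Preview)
Your proof is correct and follows exactly the paper's approach: the paper's one-line proof applies Theorem~\ref{thm:relcompchoice} with \(R\,n\,\mu := f(n) \downarrow= \mu\), which by the notational convention \(\xi \downarrow= x := \sum_{d : \xi{\downarrow}} f(d) = x\) is literally your relation. Your additional discussion of why \(R\) lands in \(\Omegann\)---in particular the \(\neg\neg\)-stability of equality in \(\bigcirc_\chi \NN\) via \(\bigcirc_\chi\)-decidability---fills in a detail the paper leaves implicit.
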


\begin{proof}
  We apply relativised computable choice with \(R n \mu := f(n) \downarrow= \mu\).
\end{proof}

\subsection{Turing jump}
\label{sec:turing-jump}

An important corollary of relativised computable choice is that we can now relativise the construction of the halting set and proof that it is non computable (Proposition~\ref{prop:haltingset}). This gives us the following definition of Turing jump.

\begin{defn}
  Given \(\chi : \NN \to \nabla_0 2\), we define the \emph{Turing jump} \(\hat{\chi} : \NN \to \nabla_0 2\) as follows.
  \begin{displaymath}
    \hat{\chi}(n) :=
    \begin{cases}
      1 & \psi(\varphi_n(n)) \downarrow= \eta(0) \\
      0 & \text{otherwise}
    \end{cases}
  \end{displaymath}
\end{defn}

\begin{thm}
  Suppose computable choice and Markov induction. Then \(\hat{\chi} \nleq_T \chi\).
\end{thm}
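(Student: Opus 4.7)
The plan is a diagonal argument built on relativised computable choice. Assume for contradiction that $\hat{\chi} \leq_T \chi$. Applying the corollary in Section~\ref{sec:relat-churchs-thes} immediately following Theorem~\ref{thm:relcompchoice}, which extracts a code for a Turing-reducible function, yields $e : \NN$ such that, for every $n$, the partial element $\psi(\varphi_e(n))$ is defined and $\hat{\chi}(n) = \erase(\psi(\varphi_e(n)))$ in $\nabla_0 \NN$. Writing $z := \psi(\varphi_e(e)) : \bigcirc_\chi \NN$, we have $\hat{\chi}(e) = \erase(z)$.

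Next I would split cases on whether $z = \eta(0)$. Since $\bigcirc_\chi$ is lex and $\NN$ has decidable equality, $\bigcirc_\chi \NN$ has decidable equality internally in the $\bigcirc_\chi$-modal subuniverse, so externally $\bigcirc_\chi((z = \eta(0)) + (z \neq \eta(0)))$ holds; because $\bot$ is $\bigcirc_\chi$-modal it suffices to derive $\bot$ from each disjunct. If $z = \eta(0)$, then $\psi(\varphi_e(e)) {\downarrow}= \eta(0)$, whence by definition of $\hat{\chi}$ we get $\hat{\chi}(e) {\downarrow}= 1$; but $\hat{\chi}(e) = \erase(\eta(0)) = \eta(0)$ in $\nabla_0 \NN$ also gives $\hat{\chi}(e) {\downarrow}= 0$. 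Using that $\NN$ is $\neg\neg$-separated, the unit $\NN \to \nabla_0 \NN$ is injective, so $\eta(0) = \eta(1)$ forces $0 = 1$, a contradiction.

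The delicate case is $z \neq \eta(0)$: here the definition of $\hat{\chi}$ gives $\hat{\chi}(e) {\downarrow}= 0$, so $\erase(z) = \eta(0)$ in $\nabla_0 \NN$, and we must derive $\bot$ from $z \neq \eta(0)$ together with $\erase(z) = \eta(0)$. I would argue internally in the $\bigcirc_\chi$-modal subuniverse, where, by the same absoluteness reasoning as in the proof of Theorem~\ref{thm:markovindinternal}, $\bigcirc_\chi \NN$ is the subuniverse's natural number type and $\erase$ is (up to canonical isomorphism) its $\nabla_0$-unit; since the subuniverse's $\NN$ has decidable and hence $\neg\neg$-stable equality, this unit is injective, so internally $\erase(z) = \eta(0)$ forces $z = \eta(0)$, contradicting the case hypothesis (which transfers between internal and external views, both $z = \eta(0)$ and $z \neq \eta(0)$ being $\bigcirc_\chi$-modal). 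Identifying $\erase$ with the internal $\nabla_0$-unit is the main technical point I expect to require care; once it is in hand, the diagonal closes immediately.
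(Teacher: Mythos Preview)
Your proposal is correct and follows essentially the same diagonal argument as the paper: from the assumption \(\hat\chi \leq_T \chi\), use relativised computable choice to obtain a code \(e\), then derive a contradiction at the diagonal \(n = e\) from the definition of \(\hat\chi\). The paper is simply much terser about the final step and packages the extraction of the code slightly differently (first factoring \(\hat\chi\) through a map \(f : \NN \to \bigcirc_\chi 2\) and then applying relativised extended Church's thesis, rather than invoking the corollary directly).

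One small remark on your second case: you do not need to route the injectivity argument through the internal subuniverse and the identification of \(\erase\) with the internal \(\nabla_0\)-unit. It is shorter to argue directly that \(\erase(z) = \eta_\nabla(0)\) implies \(\neg\neg(z = \eta(0))\) by modal induction on \(z : \bigcirc_\chi \NN\): the goal is \(\bigcirc_\chi\)-modal (both sides are \(\neg\neg\)-stable), and for \(z = \eta(n)\) it reduces to injectivity of the external unit \(\NN \to \nabla_0 \NN\), which holds since \(\NN\) is \(\neg\neg\)-separated. Combined with \(z \neq \eta(0)\) this gives the contradiction immediately, bypassing the absoluteness discussion you flag as delicate.
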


\begin{proof}
  Suppose that we did have \(\hat{\chi} \leq_T \chi\). This is precisely a witness of \(\prod_{n : \NN} \bigcirc_\chi(\hat{\chi}(n){\downarrow})\), and recall that \(\hat{\chi}(n){\downarrow} := \sum_{m : 2} (\hat{\chi}(n) \downarrow= m) \). By \cite[Lemma 1.24]{rijkeshulmanspitters} we have \(\bigcirc_\chi(\hat{\chi}(n){\downarrow}) \simeq \sum_{z : \bigcirc_\chi 2} \bigcirc_\chi \hat{\chi}(n) \downarrow= m\). We can extract from this a map \(f : \NN \to \bigcirc_\chi 2\) together with a proof that \(\mathtt{erase}(f(n)) = \hat{\chi}(n)\) for all \(n\), i.e. \(\hat{\chi}\) factors through \(\mathtt{erase} : \bigcirc_\chi 2 \to \nabla_0 2\). We then have that \(f\) must be computable by some \(e : \NN\). However, this gives a contradiction, since we have \(\psi(\varphi_e(e)) \downarrow= f(e) \neq \psi(\varphi_e(e))\).
\end{proof}

\subsection{Local continuity}
\label{sec:local-continuity}

We can also use relativised computable choice to give us a ``local continuity'' theorem for our definition of Turing reducibility, verifying that each oracle computation only requires a finite number of queries:
\begin{thm}
  \label{thm:computemodulus}
  Let \(\chi\) be an oracle \(\NN \to \nabla_0 \NN\). Suppose \(\psi^\chi_{[k_1,\ldots,k_n]}(e) {\downarrow}\). Then we can deduce the following.
  \begin{multline*}
    \bigcirc_\chi (\sum_{l \in \lst \NN} \prod_{\chi' : \NN \to \nabla_0 \NN} (\prod_{m \in l} \chi'(m) = \chi(n)) \to \\ \psi^{\chi'}_{[k_1,\ldots,k_n]}(e) {\downarrow} \times \mathtt{erase}(\psi^{\chi'}(e)) = \mathtt{erase}(\psi^\chi(e)))
  \end{multline*}
\end{thm}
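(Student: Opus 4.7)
The plan is external induction on the length $n$ of the list $[k_1,\ldots,k_n]$, with the entire conclusion wrapped in $\bigcirc_\chi$ so that the value of an oracle query can be read off as an actual natural number and fed into the rest of the computation.

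For the base case $n = 0$, the definition of $\psi^\chi_{[]}$ gives $\psi^\chi_{[]}(e)\downarrow$ precisely when $\theta(e) = \inl(k)$ for some $k : \NN$, and in that case no oracle queries occur at all. We take $l$ to be the empty list: the hypothesis on $\chi'$ is then vacuous, and $\psi^{\chi'}_{[]}(e) = \psi^{\chi}_{[]}(e) = \eta(k)$ literally, so definedness and equality of erases are immediate (and can in fact be asserted outside $\bigcirc_\chi$).

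For the inductive step, assume the result for lists of length $n-1$ and suppose $\psi^\chi_{[k_1,\ldots,k_n]}(e)\downarrow$. Since the $\inl$ branch of $\theta(e)$ returns $\inr(\bot)$, we must have $\theta(e) = \inr(n', e')$ for some $n', e' : \NN$. Inside $\bigcirc_\chi$ we may extract a witness $m : \NN$ with $\chi(n') \downarrow= m$; unfolding the definition of $\psi^\chi_{[k_1,\ldots,k_n]}$ then turns our hypothesis into $\psi^\chi_{[k_1,\ldots,k_{n-1}]}(\theta(\varphi_{e'}^{k_n}(m)))\downarrow$. Applying the inductive hypothesis inside $\bigcirc_\chi$ yields a list $l'$ with the required property for the shorter computation, and we set $l := n' :: l'$. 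For any $\chi'$ agreeing with $\chi$ on $l$, we in particular have $\chi'(n') \downarrow= m$, so the definition of $\psi^{\chi'}_{[k_1,\ldots,k_n]}$ at $\theta(e) = \inr(n', e')$ reduces the value to $\psi^{\chi'}_{[k_1,\ldots,k_{n-1}]}(\theta(\varphi_{e'}^{k_n}(m)))$; the inductive hypothesis applies directly to this expression and delivers both definedness and equal erase to $\psi^\chi_{[k_1,\ldots,k_{n-1}]}(\theta(\varphi_{e'}^{k_n}(m))) = \psi^\chi_{[k_1,\ldots,k_n]}(e)$.

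The one genuine obstacle is that $\chi(n')$ lives in $\nabla_0 \NN$ rather than $\NN$, so we cannot name the queried value as a natural number without first passing into a $\bigcirc_\chi$-modal type: this is exactly what the outer $\bigcirc_\chi$ in the conclusion provides and why the induction goes through at all, since the inductive hypothesis is also wrapped in $\bigcirc_\chi$. The remaining bookkeeping is routine, as the definition of $\psi^{\chi'}_{[k_1,\ldots,k_n]}$ at an input of the form $\inr(n',e')$ feeds the same expression $\theta(\varphi_{e'}^{k_n}(\chi'(n')))$ into the recursive call, so agreement of $\chi'$ with $\chi$ at $n'$ automatically propagates the same argument into both sides of the induction.
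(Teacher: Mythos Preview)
Your proposal is correct and follows essentially the same approach as the paper: induction on the length of the list, taking the empty list in the base case, and in the inductive step unfolding the recursive definition, using the outer \(\bigcirc_\chi\) to assume \(\chi(n')\downarrow\), applying the inductive hypothesis to the shorter list, and extending the resulting list by \(n'\). Your write-up is in fact a bit more explicit than the paper's about why the \(\bigcirc_\chi\) wrapper is needed to name the queried value as a natural number, but the argument is the same.
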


\begin{proof}
  We prove this by induction on the length \(m\) of the list \(\vec{k} = [k_1,\ldots,k_m]\).

  For \(\vec{k} = []\), we do not require the oracle, and can just take \(l = []\). We then have \(e = \inl(n)\) and \(\mathtt{erase}(\psi_{\chi'}(e)) = \iota(n) = \mathtt{erase}(\psi_{\chi}(e))\).

  For \(\vec{k} = [k_1,\ldots,k_{m-1},k_{m}]\), we have \(e = \inr(n, e')\) and \(\chi(n) {\downarrow}\) implies \(\varphi_e(\chi(n)) {\downarrow}\) with \(\psi^\chi_{\vec{k}}(e) = \psi^\chi_{[k_1,\ldots,k_{m-1}]}(\theta(\varphi_{e'}^{k_m}(\chi(n))))\). Since we are now working relative to \(\chi\), we can in fact assume \(\chi(n) {\downarrow}\). By the inductive hypothesis, applied with \(e = \theta(\varphi_{e'}^{k_m}(\chi(n)))\), we have, relative to \(\chi\), \(l : \lst(\NN)\) such that \(\prod_{x \in l} \chi'(x) = \chi(x)\) implies \(\mathtt{erase}(\psi^{\chi'}(\theta(\varphi_{e'}^{k_m}(\chi(n))))) = \mathtt{erase}(\psi^\chi(\theta(\varphi_{e'}^{k_m}(\chi(n)))))\). We append \(n\) to \(l\) to obtain a new list \(l'\). If \(\prod_{k \in l'}\chi'(k) = \chi(k)\) then we can calculate as follows.
  \begin{align*}
    \erase(\psi^{\chi'}_{\vec{k}}(e))
    &= \erase(\psi^{\chi'}_{[k_1,\ldots,k_{m-1}]}(\theta(\varphi^{k_m}_{e'}(\chi'(n))))) \\
    &= \erase(\psi^{\chi'}_{[k_1,\ldots,k_{m-1}]}(\theta(\varphi^{k_m}_{e'}(\chi(n))))) \\
    &= \erase(\psi^{\chi}_{[k_1,\ldots,k_{m-1}]}(\theta(\varphi^{k_m}_{e'}(\chi(n)))) \\
    &= \erase(\psi^{\chi}_{\vec{k}}(e)) \qedhere
  \end{align*}
\end{proof}

\begin{cor}
  For any \(\chi : \NN \to \nabla_0 \NN\) and any \(z : \bigcirc_\chi \NN\), there merely exists \(e : \NN\) such that we can compute, relative to \(\chi\), a finite list \(l : \lst (\NN)\) such that whenever \(\chi'(m) = \chi(m)\) for all \(m \in l\) we have \(\psi^{\chi'}(e) {\downarrow}\) with \(\erase(\psi^{\chi'}(e)) = \erase(z)\).
\end{cor}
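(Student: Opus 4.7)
The plan is to combine Theorem~\ref{thm:htpyisototeq} with Theorem~\ref{thm:computemodulus}. Since the target statement is wrapped in propositional truncation, the proof can freely strip any truncations it encounters along the way.

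First I would apply Theorem~\ref{thm:htpyisototeq}, which (using computable choice) produces a mere witness $e : \NN$ with $\psi^\chi(e) \downarrow = z$; this $e$ will serve as the code in the conclusion. Unfolding $\psi^\chi(e){\downarrow}$ to its definition gives $\bigcirc_\chi \sum_{\vec{k}, n} \psi^\chi_{\vec{k}}(\theta(e)) = \inl(n)$. Working inside the reflective subuniverse of $\bigcirc_\chi$-modal types I can eliminate the outer $\bigcirc_\chi$ and obtain a specific list $\vec{k}$ with $\psi^\chi_{\vec{k}}(\theta(e)){\downarrow}$. Applying Theorem~\ref{thm:computemodulus} to this $\vec{k}$ then yields, still relative to $\chi$, a finite list $l$ such that any $\chi'$ agreeing with $\chi$ on $l$ satisfies both $\psi^{\chi'}(e){\downarrow}$ and $\erase(\psi^{\chi'}(e)) = \erase(\psi^\chi(e))$.

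The only remaining piece is to replace $\erase(\psi^\chi(e))$ by $\erase(z)$. This follows by applying $\erase$ to the path in $\bigcirc_\chi \NN$ between the evaluation of $\psi^\chi(e)$ and $z$ that is part of the data of $\psi^\chi(e) \downarrow = z$, so chaining gives $\erase(\psi^{\chi'}(e)) = \erase(z)$ as required. Note that the conclusion of Theorem~\ref{thm:computemodulus} is already a $\bigcirc_\chi$-statement, which is precisely what ``compute, relative to $\chi$'' means synthetically, so no further relativisation is needed.

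The main obstacle is bookkeeping: making sure the $\bigcirc_\chi$-wrappings line up correctly, with the inner $\bigcirc_\chi$ produced by Theorem~\ref{thm:computemodulus} being subsumed by the outer $\bigcirc_\chi$ already in force, thanks to idempotence. I do not expect substantive new mathematical content beyond the two theorems invoked --- the corollary is essentially packaging, with Theorem~\ref{thm:htpyisototeq} supplying the code $e$ and Theorem~\ref{thm:computemodulus} supplying the finite modulus of continuity.
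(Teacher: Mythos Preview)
Your proposal is correct and follows essentially the same route as the paper: obtain the code \(e\) from Theorem~\ref{thm:htpyisototeq}, extract the list \(\vec{k}\) inside \(\bigcirc_\chi\) from the definition of \(\psi^\chi(e){\downarrow}\), and feed it to Theorem~\ref{thm:computemodulus}. The paper phrases the second step as an appeal to Proposition~\ref{prop:psidomstab} rather than a direct unfolding, and it omits your explicit observation about replacing \(\erase(\psi^\chi(e))\) by \(\erase(z)\), but these are cosmetic differences rather than a different argument.
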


\begin{proof}
  By Theorem~\ref{thm:htpyisototeq} there merely exists \(e\) such that \(\psi^\chi(e) \downarrow= z\). By Proposition~\ref{prop:psidomstab} we can compute, relative to \(\chi\), the finite list \(\vec{k}\) such that \(\psi^\chi_{\vec{k}}(\theta(e)) = z\). However, we can now apply Theorem~\ref{thm:computemodulus}.
\end{proof}

\begin{rmk}
  The results of this section appear in {\tt RelativisedCC.agda} in the formalisation.
\end{rmk}

\section{Computability and (higher) group theory}
\label{sec:effect-non-comp}

In this section we give an example of the new kind of proof technique that becomes possible by combining the definition of oracle modality in this paper with existing results in HoTT. Our example is a synthetic proof that two Turing degrees are equal as soon as they induce isomorphic permutation groups on \(\NN\). From computability theory, our proof will make use of Lemma~\ref{lem:distinguish}. From HoTT, we will use an approach to group theory due to Buchholtz, Van Doorn and Rijke \cite{buchholtzvdoornrijke}.

As Buchholtz et al. showed there is an equivalence and so equality between the category of groups and the category of pointed, connected, \(1\)-truncated types. Based on this idea, they suggested an approach to group theory where standard constructions in group theory are replaced with arguments that work directly with pointed, connected, \(1\)-truncated types. As they point out, this approach has the advantage of being easier to generalise to higher dimensions. In particular to define \(\infty\)-groups, we simply drop the requirement of \(1\)-truncation. This approach to group theory can help with formalisation, since the group multiplication operation is not extra data that we need to be keep track of, but concatenation of paths, which is already implicit in any type. Furthermore, certain constructions in group theory are more natural from this point of view. Of particular relevance to this proof, the permutation group of a type is just the connected component of that type in the universe, and wreath product can be simply described as a \(\Sigma\)-type \cite[Sections 4.1 and 4.6]{buchholtzvdoornrijke}. We will give a direct proof based on these ideas spelling out the details and with a few further simplifications. We point out that this technique makes essential use of types that are not \(0\)-truncated, and so illustrates the benefits of using cubical assemblies as our semantics rather than the effective topos.

We recall below construction of \emph{homotopy group} of a pointed space, which is used for one direction of the equivalence between the two definitions of group.
\begin{defn}
  Let \(X\) be a type and \(x : X\). The \emph{loop space} at \(x\), \(\Omega(X, x)\) is the identity type \(x =_X x\). The \emph{homotopy group} at \(x\), \(\pi_1(X, x)\) is the \(0\)-truncation \(\| \Omega(X, x) \|_0\) with group multiplication defined as the composition of paths.
\end{defn}

\begin{prop}
  Every pointed map \(f : (X, x) \to (Y, y)\) induces a group homomorphism \(\pi_0(f) : \pi_1(X, x) \to \pi_1(Y, y)\). Every map \(f : X \to Y\) induces a group homomorphism \(\pi_1(X, x) \to \pi_1(Y, f(x))\).
\end{prop}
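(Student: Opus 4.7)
The plan is to construct the map on loop spaces by applying $\mathrm{ap}_f$ (the action of $f$ on paths), and then pass to the $0$-truncation using functoriality of truncation.

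For the second (unpointed) statement, given $f : X \to Y$ and a loop $\ell : x =_X x$, define the underlying map $\Omega(X, x) \to \Omega(Y, f(x))$ by $\ell \mapsto \mathrm{ap}_f(\ell)$, which is a path $f(x) =_Y f(x)$. The key lemma is the standard HoTT fact that $\mathrm{ap}_f$ preserves path concatenation, $\mathrm{ap}_f(\ell_1 \cdot \ell_2) = \mathrm{ap}_f(\ell_1) \cdot \mathrm{ap}_f(\ell_2)$, proved by path induction on one of the arguments. Applying the set-truncation functor $\| - \|_0$ (which commutes with path concatenation up to the truncation) yields a homomorphism $\pi_1(X, x) \to \pi_1(Y, f(x))$ between the homotopy groups, using the universal property of truncation to descend the multiplicative structure.

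For the first (pointed) statement, the pointedness of $f$ provides a witness $p : f(x) = y$. Conjugation by $p$ gives a map $c_p : \Omega(Y, f(x)) \to \Omega(Y, y)$ by $\alpha \mapsto p^{-1} \cdot \alpha \cdot p$, which preserves concatenation by an easy path-induction argument on $p$ (in fact, when $p$ is $\mathrm{refl}$, $c_p$ is the identity). Composing $c_p$ with the unpointed construction above produces the desired $\pi_1(f) : \pi_1(X, x) \to \pi_1(Y, y)$, and this composition of homomorphisms is a homomorphism.

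There is no real obstacle here; the whole proposition is essentially bookkeeping about how $\mathrm{ap}_f$ and conjugation interact with path concatenation, and then invoking the functoriality of $\| - \|_0$ to obtain a group homomorphism rather than merely a function of sets. The content is the same as in \cite{hottbook}, and the only subtlety worth noting is that the two constructions agree, up to definitional unfolding, with the usual notion of $\pi_1(f)$ for pointed maps when $p = \mathrm{refl}$.
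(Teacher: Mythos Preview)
Your proposal is correct and follows the standard argument; the paper itself does not give a proof of this proposition, stating it as a basic fact from homotopy type theory (essentially \cite[Section~8.3]{hottbook}). Your account via \(\mathrm{ap}_f\), conjugation by the pointing path, and functoriality of \(0\)-truncation is exactly the expected justification.
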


\begin{thm}
  \label{thm:thtpygp}
  Suppose we are given oracles \(\chi, \chi' : \NN \to \nabla_0 2\) together with a group isomorphism \(\theta : \nabla_0(\Omega(\univ, \bigcirc_\chi \NN)) \simeq \nabla_0(\Omega(\univ, \bigcirc_{\chi'} \NN))\). Then we have \(\neg\neg \chi' \equiv_T \chi\).
\end{thm}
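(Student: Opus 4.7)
The plan is to use $\theta$ to extract the value of $\chi(n)$ using only the oracle $\chi'$. Since $\theta$ comes equipped with an inverse, the problem is symmetric, so it suffices to show $\neg\neg(\chi \leq_T \chi')$. By Proposition~\ref{prop:nullmodalityordering} this reduces, under double negation, to showing $\bigcirc_{\chi'}(\chi(n){\downarrow})$ for each $n : \NN$.

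First I would encode the bit $\chi(n)$ as a group element. Using Lemma~\ref{lem:defbycases} applied to the $\neg\neg$-stable proposition $\chi(n) \downarrow= 1$, define $\alpha_n \in \nabla_0 \Omega(\univ, \bigcirc_\chi \NN)$ to be the univalence-image of a fixed nontrivial automorphism of $\bigcirc_\chi \NN$ (for instance the transposition swapping two distinct elements in the image of $\eta$) when $\chi(n) \downarrow= 1$, and the identity loop otherwise. By construction $\alpha_n$ equals the group identity precisely when $\chi(n) = 0$. Setting $\beta_n := \theta(\alpha_n) \in \nabla_0 \Omega(\univ, \bigcirc_{\chi'} \NN)$, the fact that $\theta$ is a group isomorphism yields that $\beta_n$ is the identity iff $\chi(n) = 0$.

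The main step is to decode $\beta_n$ relative to $\chi'$. By the universal property of nullification, any self-map of $\bigcirc_{\chi'} \NN$ is determined by its restriction along $\eta : \NN \to \bigcirc_{\chi'} \NN$, so an automorphism is trivial iff it fixes every $\eta(m)$. I would therefore search, internally in $\bigcirc_{\chi'}$-modal types, for an $m : \NN$ witnessing $\beta_n(\eta(m)) \neq \eta(m)$; the search succeeds iff $\beta_n \neq 1$, i.e.\ iff $\chi(n) = 1$. That ``some $m$ works'' is semi-decidable relative to $\chi'$ follows from reasoning in the style of Lemma~\ref{lem:distinguish} together with the relativised Markov's principle of Theorem~\ref{thm:relmarkov}, using that $\bigcirc_{\chi'} \NN$ inherits decidable equality from the discrete type $\NN$ through the topological modality $\bigcirc_{\chi'}$. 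Markov induction then turns this into a well-defined decision procedure producing $\chi(n)$.

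The main obstacle is the decoding step. The element $\beta_n$ only lives in the $\nabla_0$-sheafification of the loop space, so its evaluation at $\eta(m)$ is a priori only a $\nabla_0$-element of $\bigcirc_{\chi'} \NN$, and ``$\beta_n$ differs from the identity'' is a merely external/classical statement rather than something $\bigcirc_{\chi'}$-constructively witnessed. Bridging this gap is the crux of the proof: Lemma~\ref{lem:distinguish} lets us convert a negated conjunction into an internally-witnessed disjunction, while Markov induction lets us carry out the unbounded search for the offending $m$ relative to $\chi'$. Once this bridge is in place the argument yields $\chi \leq_T \chi'$ up to double negation, and the same argument applied to $\theta^{-1}$ gives the reverse inequality, completing $\neg\neg(\chi \equiv_T \chi')$.
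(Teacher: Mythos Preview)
Your proposal has a genuine gap at the decoding step, and the resolution you sketch does not work. The problem is that your encoding uses a \emph{different} group element \(\alpha_n\) for each \(n\). Since \(\theta\) lives in \(\nabla_0\), all you can say about \(\beta_n = \theta(\alpha_n)\) is \(\neg\neg(\beta_n{\downarrow})\), so the purported evaluation ``\(\beta_n(\eta(m))\)'' is only an element of \(\nabla_0\NN\), not of \(\bigcirc_{\chi'}\NN\). Lemma~\ref{lem:distinguish} and relativised Markov's principle require the functions being compared to land in \(\bigcirc_{\chi'}\)-modal types, where equality is \(\chi'\)-decidable; they say nothing about \(\nabla_0\)-valued data. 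At best your argument yields, for each \(n\) separately, \(\neg\neg\,\bigcirc_{\chi'}(\chi(n){\downarrow})\), but this is already trivially true (each \(\chi(n){\downarrow}\) is \(\neg\neg\)-dense) and cannot be upgraded to \(\neg\neg\,\forall n\,\bigcirc_{\chi'}(\chi(n){\downarrow})\) since double negation does not commute past a countable product.

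The idea you are missing, and which the paper supplies, is to encode the \emph{entire} oracle \(\chi\) into a \emph{fixed finite} list of group elements, and then use that \(\theta\) is a group homomorphism. Concretely, the paper passes to a wreath-product presentation and builds two elements \(k,k'\) (encoding all of \(\chi\) at once) together with a shift \(\tilde s\) and a single transposition \(\tilde\tau_0\); for every \(n\) the relevant comparison element is obtained from these four by conjugation, via \(\tau_n = s^{\,n}\tau_0 s^{-n}\). One then only needs \(\theta\) to be defined on these four elements, which is a finite conjunction of \(\neg\neg\)-dense propositions and hence itself \(\neg\neg\)-dense. Under that single double negation, \(\theta\) of each derived element is an honest automorphism of \(\bigcirc_{\chi'}\NN\), and now Lemma~\ref{lem:distinguish} applies uniformly in \(n\), yielding \(\chi\leq_T\chi'\); this gives \(\neg\neg(\chi\leq_T\chi')\), and symmetry finishes the proof.
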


To prove this we will make use of the functions \(F_A\), for \(A : \univ\), and \(D\) defined below.
\begin{displaymath}
  \begin{tikzcd}[row sep = 0em]
    \univ^{A} \ar[r, "F_A"] & \sum_{X : \univ} \univ^X \ar[r, "D"] & \univ \\
    Y : {A} \to \univ \ar[r, |->, "F_A"] & ({A}, Y) & \\
    & (X, Y) \ar[r, |->, "D"] & \sum_{x : X} Y x
  \end{tikzcd}
\end{displaymath}

The first key point is that \(D \circ F_A\) induces an embedding of groups, which we show in two steps.
\begin{lemma}
  \label{lem:fibrelemma}
  For any types \(A, X : \univ\) we have an equivalence \(\fibre_{D \circ F_A}(X) \simeq A^X\)
\end{lemma}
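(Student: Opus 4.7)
The plan is to exhibit the equivalence by chaining together univalence with the standard correspondence between type families over $A$ and maps with codomain $A$ (i.e.\ the ``object classifier'' equivalence). Unfolding the definition, an element of $\fibre_{D \circ F_A}(X)$ is a pair $(Y, p)$ consisting of a family $Y : A \to \univ$ together with a proof $p : (\sum_{a : A} Y(a)) = X$. Applying univalence rewrites the second component as an equivalence $(\sum_{a : A} Y(a)) \simeq X$, so
\[
  \fibre_{D \circ F_A}(X) \;\simeq\; \sum_{Y : A \to \univ} \left(\sum_{a : A} Y(a)\right) \simeq X.
\]

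Next I would use the well-known equivalence $\univ^A \simeq \sum_{E : \univ} (E \to A)$, which sends $Y$ to $(\sum_{a : A} Y(a), \mathrm{pr}_1)$ with inverse sending $(E, p)$ to the family $a \mapsto \fibre_p(a)$. Under this equivalence, $\sum_{a : A} Y(a)$ corresponds to the total space $E$ and the equivalence $\sum_{a : A} Y(a) \simeq X$ corresponds to $E \simeq X$. Substituting gives
\[
  \fibre_{D \circ F_A}(X) \;\simeq\; \sum_{E : \univ} \sum_{p : E \to A} (E \simeq X).
\]
Reorganising the $\Sigma$-types and applying univalence a second time, the contribution $\sum_{E : \univ} (E \simeq X)$ is contractible (based path space at $X$ in $\univ$), so the whole thing collapses to $\sum_{p : X \to A} \mathbf{1} \simeq A^X$.

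Concretely the forward map takes $(Y, e)$ to the composite $X \xrightarrow{e^{-1}} \sum_{a : A} Y(a) \xrightarrow{\mathrm{pr}_1} A$, and the inverse sends $f : X \to A$ to the fibre family $Y(a) := \fibre_f(a)$ together with the canonical equivalence $\sum_{a : A} \fibre_f(a) \simeq X$. The main thing to check is that these are mutually inverse, which amounts to the standard fact that a type family is recovered from its total projection by taking fibres, and conversely a map is recovered from its family of fibres by the total space construction. I expect no real obstacle: everything reduces to univalence plus the object-classifier-style equivalence, both of which are standard tools in HoTT.
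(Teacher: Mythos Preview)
Your proposal is correct and follows essentially the same route as the paper: both pass through the object-classifier equivalence \(\univ^{A}\simeq \sum_{W:\univ} A^{W}\) to rewrite \(D\circ F_{A}\) as the domain projection, and then reorganise the \(\Sigma\)-types so that the factor \(\sum_{W:\univ}(W = X)\) (or, after univalence, \(\sum_{W:\univ}(W\simeq X)\)) collapses by singleton contractibility. The only cosmetic difference is that the paper stays with identity types throughout rather than converting to equivalences, and does not bother to spell out the explicit forward and inverse maps as you do.
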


\begin{proof}
  Write \(\univ/A\) for the ``slice category over \(A\)'', i.e. the type \(\sum_{W : \univ} A^W\). Note that we have an equivalence \(e : \univ^A \stackrel{\simeq}{\to} \univ/A\) where \(D \circ F_A = \dom \circ e\). Hence it suffices to show that \(\fibre_{\dom}(X) \simeq A^X\), which we do as follows.
  \begin{align*}
    \fibre_{\dom}(X) &\simeq \sum_{\sum_{W : \univ} A^W} W = X \\
                   &\simeq \sum_{\sum_{W : \univ} W = X} A^W \\
                   &\simeq A^X \qedhere
  \end{align*}
\end{proof}

\begin{lemma}
  \label{lem:df0trunc}
  Suppose that \(A\) and \(X\) are sets.
  The map \(D \circ F_A : \Omega(\univ^{A}, \lambda a.X) \to \Omega(\univ, A \times X)\) defined by action on paths is an embedding.
\end{lemma}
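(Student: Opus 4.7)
The plan is to reduce the statement to a fibre condition on $D \circ F_A$. I would invoke the standard HoTT fact that a map $f : U \to V$ has sets as fibres if and only if, for every $u, u' : U$, the induced action on paths $(u = u') \to (f(u) = f(u'))$ is an embedding (the case $n = 0$ of the correspondence between $n$-truncated maps and $(n{-}1)$-truncated action on paths). Hence it suffices to verify that every fibre of $D \circ F_A$ is a set.

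This follows directly from Lemma~\ref{lem:fibrelemma}, which identifies $\fibre_{D \circ F_A}(W) \simeq A^W$ for every $W : \univ$. Since $A$ is a set by hypothesis and $0$-truncation is preserved under function types into a set, $A^W$ is itself a set; so every fibre of $D \circ F_A$ is a set, and by the preceding reduction the action on paths of $D \circ F_A$ is an embedding between any pair of identity types in $\univ^A$. Specialising to loops at $\lambda a.X$ yields the statement.

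The only real content of the argument is the correspondence between set-valued fibres and embedding-valued action on paths; once that is in hand, the rest is immediate. An equivalent direct route avoiding the citation would be to observe that the fibre of the action on paths over $\mathtt{refl}_{A \times X}$ is a loop space of $\fibre_{D \circ F_A}(A \times X) \simeq A^{A \times X}$, which is a proposition because $A^{A \times X}$ is a set, with the other fibres controlled by the $\infty$-group structure on loop spaces. Notably, only the set-ness of $A$ enters this argument; the hypothesis that $X$ is a set is presumably included for uniformity with the broader context in which the lemma is applied.
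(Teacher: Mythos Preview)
Your proposal is correct and follows essentially the same route as the paper: both reduce to showing that the homotopy fibre of \(D \circ F_A\) over \(A \times X\) is a set via Lemma~\ref{lem:fibrelemma}, identifying it with \(A^{A \times X}\). Your observation that only the set-ness of \(A\) is actually needed is correct; the paper invokes both hypotheses but the argument only uses that \(A\) is \(0\)-truncated.
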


\begin{proof}
  It suffices to show that \(K := \fibre_{D \circ F_A} (A \times X)\) is \(0\)-truncated. However, by Lemma~\ref{lem:fibrelemma}, \(K\) is equivalent to the exponential \(A^{A \times X}\), which is \(0\)-truncated by the assumption that \(A\) and \(X\) are \(0\)-truncated.
\end{proof}

The second key point is that we can view the wreath products of symmetry groups as loop spaces in \(\sum_{X : \univ} \univ^{X}\). In particular we can show by general arguments that concatenation of paths behaves as we would expect for the usual group theoretic definition of wreath product.

More formally, we note that by based path induction for each path \(p : A = B\) and each \(X : A \to \univ\) we can define a path \(\tilde{p} : F_A(X) = F_B(X \circ p^\ast)\). We then have the following lemma.
\begin{lemma}
  \label{lem:sigmaqspecialcase} Suppose we are given \(A, X : \univ\) together with paths \(p : A = A\) and \(q : A \to (X = X)\). Note that \(q \circ p^\ast\) also has type \(A \to X = X\). We then have \(F_A(q \circ p^\ast) = \tilde{p}^{-1} \cdot F_A(q) \cdot \tilde{p}\).
\end{lemma}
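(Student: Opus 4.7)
The plan is to proceed by path induction on $p$. Strictly speaking the lemma is stated with $p : A = A$ a loop, but to apply path induction we first generalise: for all $B : \univ$, all $p : A = B$, and all $q : A \to (X = X)$, we aim to show
\[
  F_B(q \circ p^\ast) = \tilde{p}^{-1} \cdot F_A(q) \cdot \tilde{p}
\]
as a path in $\sum_{W : \univ} \univ^W$. We should first check that this typechecks. Treating $q$ as a path $\lambda a.X =_{\univ^A} \lambda a.X$, we have $F_A(q) : (A, \lambda a.X) = (A, \lambda a.X)$. Since $p^\ast : B \to A$ and $X$ is constant, $q \circ p^\ast$ is a path $\lambda b.X =_{\univ^B} \lambda b. X$, so $F_B(q \circ p^\ast) : (B, \lambda b.X) = (B, \lambda b.X)$. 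Meanwhile $\tilde{p} : (A, \lambda a.X) = (B, \lambda b.X)$, and so $\tilde{p}^{-1} \cdot F_A(q) \cdot \tilde{p}$ is indeed a loop at $(B, \lambda b.X)$, matching the type of the left-hand side.

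Now apply based path induction on $p$: it suffices to treat the case $B \equiv A$, $p \equiv \mathrm{refl}$. In this case $p^\ast$ is definitionally the identity function on $A$, so $q \circ p^\ast = q$ and hence $F_B(q \circ p^\ast) = F_A(q)$. On the other hand $\tilde{p}$ was itself defined by based path induction, so $\tilde{\mathrm{refl}} = \mathrm{refl}$, and the right-hand side becomes $\mathrm{refl}^{-1} \cdot F_A(q) \cdot \mathrm{refl} = F_A(q)$. The two sides agree, which closes the path induction and specialises (in the case $B = A$) to the statement of the lemma.

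I do not expect any real obstacle: the only subtlety is keeping track of the types of the paths on each side and remembering that both $\tilde{(-)}$ and $(-)^\ast$ are defined by based path induction so that their values on $\mathrm{refl}$ reduce to $\mathrm{refl}$ and the identity map respectively. Once the statement has been generalised from loops at $A$ to paths $A = B$, the induction is immediate.
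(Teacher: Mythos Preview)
Your proposal is correct and follows essentially the same approach as the paper: both generalise the loop \(p : A = A\) to a path \(p : A = B\) and then apply based path induction, reducing to the trivial case \(p \equiv \mathrm{refl}\). The only cosmetic difference is that the paper states the generalised claim as a commutative square, while you write out the equivalent equation \(F_B(q \circ p^\ast) = \tilde{p}^{-1} \cdot F_A(q) \cdot \tilde{p}\) directly.
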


\begin{proof}
  This is a special case of the following more general statement that we can prove by based path induction on \(p\). Suppose \(A, B : \univ\), \(X : \univ\), \(p : A = B\) and \(q : A \to X = X\). Then we have the following commutative square in \(\sum_{Z : \univ} \univ^Z\):
  \begin{displaymath}
    \begin{tikzcd}
      (A, \lambda a.X) \ar[r, snake=snake, "\tilde{p}"] \ar[d, snake=snake, "F_A(q)"] & (B, \lambda b.X) \ar[d, snake=snake, "F_B(q \circ p^\ast)"] \\
      (A, \lambda a.X) \ar[r, snake=snake, "\tilde{p}"] & (B, \lambda b.X)
    \end{tikzcd}\qedhere
  \end{displaymath}
\end{proof}

Write \(\ZZ^\ast\) for \(\ZZ + 1\). We are going to apply the above construction with \(A := \bigcirc_{\chi} \ZZ^{\ast}\). We consider the point \(\lambda x.\bigcirc_{\chi} 2\) of \(\univ^{\bigcirc_{\chi} \ZZ^{\ast}}\). In particular, note that the action on paths of \(F_{A}\) and \(D\) gives us group homomorphisms as follows.
\begin{displaymath}
  \Omega(\univ^{\bigcirc_{\chi} \ZZ^{\ast}}, \lambda x.\bigcirc_{\chi} 2) \rightarrow
  \Omega\left(\sum_{{A : \univ}}\univ^{A}, (\bigcirc_{\chi} \ZZ^{\ast}, \lambda x.\bigcirc_{\chi} 2)\right) \rightarrow
  \Omega\left(\univ, \bigcirc_{\chi} \ZZ^{\ast} \times \bigcirc_{\chi} 2\right)
\end{displaymath}
Note also that we have equivalences
\begin{displaymath}
  \bigcirc_{\chi} \ZZ^{\ast} \times \bigcirc_{\chi} 2 \;\cong\;
  \bigcirc_{\chi}\ZZ^{\ast} \times 2 \;\cong\;
  \bigcirc_{\chi} \NN,
\end{displaymath}
which induces an equivalence
\begin{displaymath}
  \Omega\left(\univ, \bigcirc_{\chi} \ZZ^{\ast} \times \bigcirc_{\chi} 2\right) \;\cong\;
  \Omega(\univ, \bigcirc_{\chi} \NN).
\end{displaymath}

We will use the following permutations of \(\ZZ^\ast\) in our proofs. Write \(s : \ZZ^\ast \simeq \ZZ^\ast\) for the permutation of \(\ZZ^\ast\) that shifts by \(1\) on the \(\ZZ\) component and is the identity on the \(1\) component. For \(n : \ZZ\), write \(\tau_n\) for the permutation \(\ZZ^\ast \simeq \ZZ^\ast\) that swaps \(n : \ZZ\) with the unique element of \(1\), and otherwise is the identity.
\begin{lemma}
  \label{lem:swapdecomplemma}
   For all \(n\) we have \(\tau_n = s^n \circ \tau_0 \circ s^{-n}\).
\end{lemma}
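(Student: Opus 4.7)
The plan is to prove the equality by function extensionality and pointwise case analysis. Since $\ZZ^\ast = \ZZ + 1$ is a set, the type of self-equivalences of $\ZZ^\ast$ is a set, so equality of two such permutations is a proposition and it suffices to check that the underlying maps $\ZZ^\ast \to \ZZ^\ast$ agree on every input.

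Fix $n : \ZZ$ and take an arbitrary $x : \ZZ^\ast$. First I would split on whether $x = \inr(\ast)$ or $x = \inl(m)$ for some $m : \ZZ$. If $x = \inr(\ast)$, then by definition $\tau_n(x) = \inl(n)$, and on the right-hand side $s^{-n}$ fixes $\inr(\ast)$, $\tau_0$ sends $\inr(\ast)$ to $\inl(0)$, and $s^n$ sends $\inl(0)$ to $\inl(n)$, so both sides agree. If $x = \inl(m)$, I would further split using decidable equality on $\ZZ$ as to whether $m = n$. In the case $m = n$, $\tau_n$ sends $\inl(n)$ to $\inr(\ast)$; on the right, $s^{-n}$ sends $\inl(n)$ to $\inl(0)$, $\tau_0$ sends $\inl(0)$ to $\inr(\ast)$, and $s^n$ fixes $\inr(\ast)$. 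In the case $m \neq n$, $\tau_n$ fixes $\inl(m)$; on the right, $s^{-n}$ sends $\inl(m)$ to $\inl(m - n)$, where $m - n \neq 0$, so $\tau_0$ fixes $\inl(m - n)$, and $s^n$ returns it to $\inl(m)$.

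There is no real obstacle here: the proof is a routine unfolding of the definitions of $s$, $\tau_0$, $\tau_n$, and composition of permutations, together with the arithmetic facts $n - n = 0$ and $m - n = 0 \Leftrightarrow m = n$ on $\ZZ$. The only mild care required is in handling $s^n$ and $s^{-n}$ as iterated equivalences indexed by $\ZZ$, but this amounts to observing that their action on the $\ZZ$ component is addition by $\pm n$ and that they fix the $1$ component.
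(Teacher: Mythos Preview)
Your proposal is correct and is exactly the kind of direct verification the paper has in mind; the paper's own proof simply says ``It is straightforward to show this directly'' without spelling out the case analysis. Your pointwise check on the three cases $x = \inr(\ast)$, $x = \inl(n)$, and $x = \inl(m)$ with $m \neq n$ is precisely what is needed.
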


\begin{proof}
  It is straightforward to show this directly.
\end{proof}

Finally, note that we can encode \(\chi\) as elements \(k, k' : \Omega(\univ^{\bigcirc_\chi \ZZ^\ast}, \lambda x.\bigcirc_\chi 2)\) as follows. First, given \(n : \NN\) we define \(q : \NN \to \Omega(\univ, \bigcirc_\chi 2)\) internally in \(\bigcirc_\chi\)-modal types as follows, writing \(\mathtt{not}\) for the non trivial path \(2 = 2\) swapping the two elements.
\begin{displaymath}
  q(n) =
  \begin{cases}
    1_2 & \chi(n) = 0 \\
    \mathtt{not} & \chi(n) = 1
  \end{cases}
\end{displaymath}
We define \(k\) and \(k'\) first as elements of \(\ZZ^\ast \to \Omega(\univ, \bigcirc_\chi 2)\), since we can then extend them to maps \(\bigcirc_\chi \ZZ^\ast \to \Omega(\univ, \bigcirc_\chi 2)\) to obtain elements of \(\Omega(\univ^{\bigcirc_\chi \ZZ^\ast}, \lambda x.\bigcirc_\chi 2)\).
\begin{displaymath}
  \begin{gathered}
    k(x) =
    \begin{cases}
      q(| x |) & x \in \ZZ \\
      \mathtt{not} & x \in 1
    \end{cases}
  \end{gathered}
  \qquad
  \begin{gathered}
    k'(x) =
    \begin{cases}
      q(|x|) & x \in \ZZ \\
      1_{2} & x \in 1
    \end{cases}
  \end{gathered}
\end{displaymath}

Note that we have defined \(k, k', \tau_n\) to ensure that the following lemma holds.
\begin{lemma}
  \label{lem:keqtochi}
  For all \(n\), we have \(\chi(n) \downarrow= 0\) iff \(k \neq k \circ \tau_n\), and \(\chi(n) \downarrow= 1\) iff \(k' \neq k' \circ \tau_n\).
\end{lemma}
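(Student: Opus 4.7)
The plan is to reduce the claim to a pointwise computation on $\ZZ^\ast$. Since $\bigcirc_{\chi}$ is lex (because $2$ is $\neg\neg$-separated, so the modality is topological), the loop type $\Omega(\univ, \bigcirc_{\chi} 2)$ is $\bigcirc_{\chi}$-modal. Consequently, precomposition with the unit $\eta : \ZZ^{\ast} \to \bigcirc_{\chi} \ZZ^{\ast}$ induces an equivalence on function spaces into $\Omega(\univ, \bigcirc_{\chi} 2)$, and by function extensionality equality of the loops $k$ and $k \circ \tau_n$ is equivalent to their pointwise agreement at every $x : \ZZ^{\ast}$.

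Next I would unfold the definitions. Since $\tau_n$ fixes every $m : \ZZ$ with $m \neq n$, the two loops automatically agree outside $n$ and the unique point $\ast$ of the $1$ summand. At $x = n$ one computes $k(n) = q(n)$ while $k(\tau_n(n)) = k(\ast) = \mathtt{not}$; at $x = \ast$ one gets $k(\ast) = \mathtt{not}$ while $k(\tau_n(\ast)) = k(n) = q(n)$. Both comparisons collapse to the single condition $q(n) = \mathtt{not}$, so $k = k \circ \tau_n$ holds iff $q(n) = \mathtt{not}$.

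Finally I would link $q(n)$ to $\chi(n)$. Since $\chi(n){\downarrow}$ is $\bigcirc_{\chi}$-connected, internally we have $\chi(n) \in 2$, and the case split defining $q$ gives $q(n) = 1_{2}$ when $\chi(n) \downarrow= 0$ and $q(n) = \mathtt{not}$ when $\chi(n) \downarrow= 1$. To pass from an equation in $\Omega(\univ, \bigcirc_{\chi} 2)$ back to $\chi$, I need $1_{2} \neq \mathtt{not}$: if $1_{2} = \mathtt{not}$ then transport would give $\eta(0) = \eta(1)$ in $\bigcirc_{\chi} 2$, but $\erase : \bigcirc_{\chi} 2 \to \nabla_{0} 2$ separates $\eta(0)$ and $\eta(1)$, since under $\nabla_{0} 2 \simeq \Omegann$ they correspond to $\bot$ and $\top$. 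Combining, $k = k \circ \tau_n$ iff $\chi(n) \downarrow= 1$, and by $\neg\neg$-stability of $\nabla_{0} 2$ the negation is equivalent to $\chi(n) \downarrow= 0$, as claimed. The argument for $k'$ is the mirror image, swapping the roles of $1_{2}$ and $\mathtt{not}$ at $\ast$ so that $k' = k' \circ \tau_n$ iff $q(n) = 1_{2}$ iff $\chi(n) \downarrow= 0$.

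The main obstacle is really only the last paragraph: translating the internal, by-cases definition of $q$ into the external propositional equivalence with $\chi(n) \downarrow= 1$, and justifying the inequality $1_{2} \neq \mathtt{not}$ in the loop space at $\bigcirc_{\chi} 2$ rather than the more familiar loop space at $2$.
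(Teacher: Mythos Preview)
Your argument is correct and is exactly the unfolding that the paper leaves implicit: the paper gives no proof beyond the remark that \(k, k', \tau_n\) were \emph{defined} so that the lemma holds. Your reduction to the pointwise comparison at \(n\) and \(\ast\), and your justification that \(1_{2} \neq \mathtt{not}\) in \(\Omega(\univ, \bigcirc_{\chi} 2)\) via \(\erase\), are the right way to make that remark precise.
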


Finally this gives us the key lemma that tells us how to compute the Turing reduction from a finite list of values of \(\theta\). For convenience, we will assume that \(\theta\) has been composed with appropriate isomorphisms to make it an isomorphism \(\Omega(\univ, \bigcirc_\chi (\ZZ^\ast \times 2)) \to \Omega(\univ, \bigcirc_{\chi'} (\ZZ^\ast \times 2))\).
\begin{lemma}
  \label{lem:htypfromfinval}
  Suppose that \(\theta(D(F_{\ZZ^\ast}(k))) {\downarrow}\), \(\theta(D(F_{\ZZ^\ast}(k'))) {\downarrow}\), \(\theta(D(\tilde{\tau}_0)) {\downarrow}\), and \(\theta(D(\tilde{s})) {\downarrow}\). Then \(\chi \leq_T \chi'\).
\end{lemma}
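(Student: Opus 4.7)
The plan is to prove $\bigcirc_{\chi'} \chi(n){\downarrow}$ for each $n : \NN$, which by Proposition~\ref{prop:nullmodalityordering} gives $\chi \leq_T \chi'$. Fix such an $n$. The first task is to use the group homomorphism structure of $\theta$: Lemma~\ref{lem:swapdecomplemma} decomposes $\tau_n = s^n \tau_0 s^{-n}$, and Lemma~\ref{lem:sigmaqspecialcase} (using $\tau_n^\ast = \tau_n$, since $\tau_n$ is an involution) rewrites $F_{\ZZ^\ast}(k \circ \tau_n)$ as the conjugate $\tilde\tau_n^{-1} \cdot F_{\ZZ^\ast}(k) \cdot \tilde\tau_n$. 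Applying $D$ (which is functorial on loops, hence a group homomorphism at the loop space) and then $\theta$ expresses $\theta(D(F_{\ZZ^\ast}(k \circ \tau_n)))$ as a group-theoretic word in the four elements $\theta(D(F_{\ZZ^\ast}(k)))$, $\theta(D(\tilde s))$, $\theta(D(\tilde\tau_0))$, and so it is defined. The same argument shows $\theta(D(F_{\ZZ^\ast}(k' \circ \tau_n)))$ is defined.

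Write $\alpha := \theta(D(F_{\ZZ^\ast}(k)))$, $\beta_n := \theta(D(F_{\ZZ^\ast}(k \circ \tau_n)))$, and analogously $\alpha', \beta'_n$. Lemma~\ref{lem:keqtochi} combined with the injectivity of $D \circ F_{\ZZ^\ast}$ provided by Lemma~\ref{lem:df0trunc}, and the injectivity of $\theta$ as an isomorphism, yields
\[
  \chi(n) \downarrow= 0 \ \Leftrightarrow\ \alpha \neq \beta_n, \qquad  \chi(n) \downarrow= 1 \ \Leftrightarrow\ \alpha' \neq \beta'_n.
\]
Since $\chi$ lands in $\nabla_0 2$, definedness of $\chi(n)$ is $\neg\neg$-true, so these two equivalences together force $\neg(\alpha = \beta_n \wedge \alpha' = \beta'_n)$.

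The final step is to apply Lemma~\ref{lem:distinguish} with the oracle $\chi'$. I would view each of $\alpha, \beta_n, \alpha', \beta'_n$ as a function $\NN \to \bigcirc_{\chi'} \NN$ by chaining the univalence correspondence (loops in $\univ$ at $\bigcirc_{\chi'}(\ZZ^\ast \times 2)$ with self-equivalences) together with the equivalence $\bigcirc_{\chi'}(\ZZ^\ast \times 2) \simeq \bigcirc_{\chi'} \NN$ and restriction along $\eta : \NN \to \bigcirc_{\chi'} \NN$; since maps out of $\bigcirc_{\chi'} \NN$ into a modal type are determined by their values on $\NN$, this preserves and reflects equality. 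Thus the hypothesis of Lemma~\ref{lem:distinguish} holds, yielding $\bigcirc_{\chi'}(\alpha \neq \beta_n \,+\, \alpha' \neq \beta'_n)$, and each disjunct produces $\chi(n){\downarrow}$ via the equivalences of the second paragraph. The main obstacle I anticipate is the bookkeeping at the start of the proof: getting the path-algebraic decomposition of $\tilde\tau_n$ in terms of $\tilde s$ and $\tilde\tau_0$ right with correct direction conventions for univalence and functoriality of $D$ and $\tilde{(-)}$, so that the group-theoretic word in $\theta$ of the four generators is genuinely equal to $\theta(D(F_{\ZZ^\ast}(k \circ \tau_n)))$.
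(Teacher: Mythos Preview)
Your proposal is correct and follows essentially the same route as the paper: decompose \(\tau_n\) via Lemma~\ref{lem:swapdecomplemma}, use Lemma~\ref{lem:sigmaqspecialcase} and the homomorphism property of \(\theta\) to get definedness of the conjugated elements, invoke Lemma~\ref{lem:df0trunc} plus injectivity of \(\theta\) to transport Lemma~\ref{lem:keqtochi} across, and finish with Lemma~\ref{lem:distinguish}. The only omission is that to apply Lemma~\ref{lem:df0trunc} you need \(\bigcirc_\chi \ZZ^\ast\) and \(\bigcirc_\chi 2\) to be sets, which the paper justifies by noting that \(\bigcirc_\chi\) is lex and hence preserves hlevel.
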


\begin{proof}
  By Lemma~\ref{lem:swapdecomplemma} we can show that \(\theta(D(\tilde{\tau}_n)) {\downarrow}\) for all \(n\). But by Lemma~\ref{lem:sigmaqspecialcase} we know \(F_{\bigcirc_\chi \ZZ^\ast}(k \circ \tau_n^\ast) = \tilde{\tau}_n^{-1} \cdot F_{\bigcirc_\chi \ZZ^\ast}(k) \cdot \tilde{\tau}_n\), and so also \(\theta(D(F_{\bigcirc_\chi \ZZ^\ast}(k \circ \tau_n^\ast))) {\downarrow}\) and similarly \(\theta(D(F_{\bigcirc_\chi \ZZ^\ast}(k' \circ \tau_n^\ast))) {\downarrow}\). Since \(\bigcirc_\chi\) is lex, it preserves hlevel \cite[Corollary 3.9]{rijkeshulmanspitters}, and so we can apply Lemma~\ref{lem:df0trunc} to see that \(D \circ F_{\bigcirc_\chi \ZZ^\ast}\) is an embedding of groups.

  Therefore by Lemma~\ref{lem:keqtochi} and  we can see that \(\chi(n) \downarrow= 0\) iff \(\theta(D(F_{\bigcirc_\chi \ZZ^\ast}(k))) \neq \theta(D(F_{\bigcirc_\chi \ZZ^\ast}(k \circ \tau_n^\ast)))\) and \(\chi(n) \downarrow= 1\) iff \(\theta(D(F_{\bigcirc_\chi \ZZ^\ast}(k'))) \neq \theta(D(F_{\bigcirc_\chi \ZZ^\ast}(k' \circ \tau_n^\ast)))\). However, we can view the elements of \(\Omega(\univ, \bigcirc_{\chi'} (\ZZ^\ast \times 2))\) as functions computable in \(\chi'\), and so apply Lemma~\ref{lem:distinguish} to deduce \(\bigcirc_{\chi'} (\theta(D(F_{\bigcirc_\chi \ZZ^\ast}(k))) \neq \theta(D(F_{\bigcirc_\chi \ZZ^\ast}(k \circ \tau_n^\ast))) + \theta(D(F_{\bigcirc_\chi \ZZ^\ast}(k'))) \neq \theta(D(F_{\bigcirc_\chi \ZZ^\ast}(k' \circ \tau_n^\ast))))\) and thereby \(\bigcirc_{\chi'} \chi(n) {\downarrow}\).
\end{proof}

\begin{proof}[Proof of Theorem \ref{thm:htpyisototeq}]
  Clearly it suffices to show \(\neg \neg \chi \leq_T \chi'\), since we can then also show \(\neg \neg \chi' \leq_T \chi\) by symmetry and thereby deduce \(\neg \neg \chi' \equiv_T \chi\).

  Note that from Lemma~\ref{lem:htypfromfinval} we can deduce \(\neg \neg \chi' \equiv_T \chi\) from the double negation of the product of the finite list of assumptions \(\theta(D(F_{\ZZ^\ast}(k))) {\downarrow}\), \(\theta(D(F_{\ZZ^\ast}(k'))) {\downarrow}\), \(\theta(D(\tilde{\tau}_0)) {\downarrow}\), and \(\theta(D(\tilde{s})) {\downarrow}\). However, since the list is finite, the double negation of its product is the same as the product of the double negations of each of its elements. However, we can prove the double negation of each element of the list, and so we are done.
\end{proof}

\begin{rmk}
  A proof of the key Lemma~\ref{lem:htypfromfinval} appears in {\tt PermutationGroups.agda} in the formalisation.
\end{rmk}

\section{Some non-topological modalities}
\label{sec:some-observ-modal}

In this section we will give some examples of modalities based on oracle modalities, but combined with higher types. For this, we will use a general construction due to Christensen, Opie, Rijke and Scoccola \cite{christensenopierijkescoccola}.

\begin{defn}[Christensen-Opie-Rijke-Scoccola]
  Let \(\bigcirc\) be a modality. The \emph{suspension} of \(\bigcirc\) is the (necessarily unique) modality \(\Sigma \bigcirc\) such that a type \(A\) is \(\Sigma \bigcirc\)-modal if and only if it is \(\bigcirc\)-separated.

  We write \(\bigcirc^{(n)}\) for the result of iterating the suspension \(k\) times. That is, \(\bigcirc^{(0)} = \bigcirc\) and \(\bigcirc^{(n + 1)} = \Sigma (\bigcirc^{(n)})\).
\end{defn}

Although Christensen et. al. showed that suspension modalities exist in general, we will only need the following easier special case.
\begin{prop}
  Suppose that \(\bigcirc\) is the nullification of a family of types \(B : A \to \univ\). Then \(\Sigma \bigcirc\) is the nullification of the family \(\Sigma B : A \to \univ\), defined by mapping \(a : A\) to \(\Sigma (B(a))\), the suspension of \(B(a)\).
\end{prop}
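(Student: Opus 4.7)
The plan is to show that a type $X$ is null for the family $\Sigma B$ if and only if $X$ is $\bigcirc$-separated; by uniqueness of the suspension modality as stated by Christensen et al., this identifies the nullification of $\Sigma B$ with $\Sigma \bigcirc$.

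The central computation is the following equivalence, which holds for any types $X, Y$ using the universal property of the suspension $\Sigma Y \simeq 1 \sqcup_Y 1$:
\[
  X^{\Sigma Y} \;\simeq\; \sum_{x_N, x_S : X} (x_N = x_S)^{Y}.
\]
Combining this with the equivalence $X \simeq \sum_{x_N, x_S : X}(x_N = x_S)$ (given by the diagonal $x \mapsto (x, x, \mathrm{refl})$ and contracting the second factor), the constant map $X \to X^{\Sigma Y}$ is identified with the map of total spaces induced fibrewise by the constant map $(x_N = x_S) \to (x_N = x_S)^Y$. By standard properties of $\Sigma$-types, the total map is an equivalence iff each fibrewise map is, i.e.\ iff $x_N = x_S$ is $Y$-null for all $x_N, x_S : X$.

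Applying this observation to each $Y = B(a)$, I conclude that $X$ is null for the family $\Sigma B$ iff for all $a : A$ and all $x, y : X$ the identity type $x = y$ is $B(a)$-null. Since $\bigcirc$ is the nullification of $B$, a type is $B(a)$-null for every $a$ precisely when it is $\bigcirc$-modal, so this condition becomes: for all $x, y : X$, $x = y$ is $\bigcirc$-modal. This is the characterisation of $X$ being $\bigcirc$-separated in terms of a modal diagonal, which agrees with the definition used in the paper (the equivalence of the two formulations follows from a general lemma: applying the condition on identity types of $X$ to $\bigcirc X$ is justified because $\bigcirc X$ is itself $\bigcirc$-modal, so its identity types are $\bigcirc$-modal iff the diagonal of $\bigcirc X$ is $\bigcirc$-modal, matching the $w, z : \bigcirc X$ formulation). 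Once this identification is in place, the uniqueness clause for $\Sigma \bigcirc$ finishes the proof.

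The main obstacle is the book-keeping around the two formulations of separatedness and making sure the equivalences line up: in particular, the identification of $X^{\Sigma Y}$ with the $\Sigma$-type of dependent identity types is natural in $X$, but care is needed to verify that the equivalence sends the constant map on the left to the fibrewise constant map on the right without introducing a sign or transport obstruction. The rest of the argument is formal consequence of the definition of nullification and of $\Sigma \bigcirc$.
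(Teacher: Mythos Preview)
The paper states this proposition without proof, citing it as an easier special case of the general existence result in Christensen--Opie--Rijke--Scoccola. Your argument is the standard one (and essentially the one in that reference): a type $X$ is $\Sigma B(a)$-null iff its identity types are $B(a)$-null, so $X$ is null for the whole family $\Sigma B$ iff every identity type $x = y$ is $\bigcirc$-modal, i.e.\ iff $X$ is $\bigcirc$-separated. The computation via $X^{\Sigma Y} \simeq \sum_{x_N,x_S}(x_N=x_S)^Y$ and the fibrewise-equivalence criterion is correct and does indeed identify the constant map with the fibrewise constant maps.

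One comment on the separatedness discussion. The condition you arrive at is ``$x=y$ is $\bigcirc$-modal for all $x,y:X$,'' which is the standard definition of $\bigcirc$-separated (and the one used in Christensen et al.). The paper's stated definition, with $w,z:\bigcirc X$, is evidently a typo: for a higher modality the identity types of $\bigcirc X$ are \emph{always} modal (this is built into the definition), so that version is vacuous and would make every type separated. Your attempted reconciliation of the two formulations therefore does not work as written --- there is nothing to reconcile, since the $w,z:\bigcirc X$ version is not a meaningful alternative. You should simply use the standard definition (identity types of $X$ are modal), which is clearly what the paper intends throughout (e.g.\ $\neg\neg$-separated is used to mean $\neg\neg(x=y)\to x=y$ for $x,y:X$). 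With that correction your proof is complete.
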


Although this is an entirely general definition, we can view it as a natural construction in computability theory as follows. We can think of \(\bigcirc_{\chi} X\) as the result of adding new points to \(X\). Namely, we think of \(X\) as only containing computable elements, and \(\bigcirc_{\chi} X\) as also containing new points that can be computed using the oracle \(\chi\). On the other hand, in \(\bigcirc^{(1)}_{\chi} X\) we can use the oracle only to compute new paths, while the points are left unaffected. More generally, in \(\bigcirc^{(n)}_{\chi}\) we can use the oracle \(\chi\) to compute new \(m\)-cells for \(m \geq n\), leaving lower dimensions the same.

We can make this precise through the following arguments. We first consider the easier case of showing that lower dimensions are unaffected. We first observe that the case where \(n > m + 1\) holds in general for purely formal reasons, using the following lemma.
\begin{lemma}
  \label{lem:suspmodalitytoconn}
  For any modality \(\bigcirc\) and any type \(X\), the unit map \(\eta^{(n)} : X \to \bigcirc^{(n)} X\) is \(n - 2\)-connected.
\end{lemma}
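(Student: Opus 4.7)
The plan is to prove, by induction on $n$, the slightly stronger statement that every $(n-2)$-truncated type is $\bigcirc^{(n)}$-modal. Once this is established, the $(n-2)$-connectivity of $\eta^{(n)}$ will follow from the standard characterization of $k$-connected maps as those inducing equivalences of dependent sections into $k$-truncated families.

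The base case $n = 0$ is immediate: any contractible (i.e.\ $(-2)$-truncated) type is $\bigcirc$-modal, since contractible types are preserved by every modality. For the inductive step, assume every $(n-2)$-truncated type is $\bigcirc^{(n)}$-modal and let $X$ be $(n-1)$-truncated. Then each identity type $x =_X y$ is $(n-2)$-truncated and therefore $\bigcirc^{(n)}$-modal by the inductive hypothesis, so $X$ is $\bigcirc^{(n)}$-separated. By the definition of the suspension modality this is exactly the statement that $X$ is $\bigcirc^{(n+1)}$-modal.

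To conclude, recall the standard fact (see e.g.\ \cite[Theorem 7.5.7]{hottbook}) that a map $f : A \to B$ is $k$-connected if and only if for every family $P : B \to \univ$ of $k$-truncated types, precomposition with $f$ induces an equivalence $\prod_{b : B} P(b) \simeq \prod_{a : A} P(f(a))$. Applying this to $\eta^{(n)} : X \to \bigcirc^{(n)} X$ with $k = n-2$: any family of $(n-2)$-truncated types over $\bigcirc^{(n)} X$ consists pointwise of $\bigcirc^{(n)}$-modal types by the claim just proved, so the precomposition map is an equivalence by the universal property of the unit of a modality (see e.g.\ \cite[Lemma 1.26]{rijkeshulmanspitters}). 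Hence $\eta^{(n)}$ is $(n-2)$-connected. The whole argument is essentially routine once the strengthening to modality is spotted; the only real care needed is bookkeeping truncation indices, in particular the slightly degenerate base case $n = 0$ where $(n-2)$-connectivity is automatic on both sides.
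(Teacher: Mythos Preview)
Your proof is correct. Both your argument and the paper's establish the same intermediate fact---that every $(n-2)$-truncated type is $\bigcirc^{(n)}$-modal---but by different means. You prove it by an explicit induction on $n$; the paper compresses the same induction into the inequality $\bigcirc \leq_T \|-\|_{-2}$, then uses that suspension is monotone for $\leq_T$ and that $\|-\|_{-2}^{(n)} = \|-\|_{n-2}$. For the final step, you invoke the lifting characterisation of $(n-2)$-connected maps (HoTT Book~7.5.7) together with the dependent universal property of the modality unit, whereas the paper simply notes that $\eta^{(n)}$ is $\bigcirc^{(n)}$-connected (a general fact about modality units) and that $\bigcirc^{(n)} \leq_T \|-\|_{n-2}$ then forces its fibres to be $(n-2)$-connected. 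Your route is more self-contained and avoids the $\leq_T$ machinery entirely; the paper's is a three-line argument once that framework is available.
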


\begin{proof}
  We have \(\bigcirc \leq_{T} \| - \|_{-2}\). Hence, \(\bigcirc^{(n)} \leq_{T} \| - \|_{-2}^{(n)} = \| - \|_{n - 2}\). Hence, \(\eta^{(n)}\) is \(\| - \|_{n - 2}\)-connected.
\end{proof}

\begin{prop}
  \label{prop:deletemodminus2}
  If \(n - k \geq 2\) then \(\pi_{k}(\bigcirc^{(n)} X) = \pi_{k}(X)\) for all \(X\).
\end{prop}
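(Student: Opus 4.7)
The plan is to reduce the claim directly to Lemma~\ref{lem:suspmodalitytoconn} combined with the standard HoTT fact that an $m$-connected map induces isomorphisms on $\pi_k$ for all $k \leq m$. The hypothesis $n - k \geq 2$ rewrites as $k \leq n - 2$, so it suffices to show that the iterated unit $\eta^{(n)} \colon X \to \bigcirc^{(n)} X$ induces isomorphisms on $\pi_k$ for $k \leq n - 2$ at every basepoint $x : X$, taking the matching basepoint $\eta^{(n)}(x)$ on the right.

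First I would apply Lemma~\ref{lem:suspmodalitytoconn}, which yields that $\eta^{(n)}$ is $(n-2)$-connected. Invoking the classical result that an $m$-connected map between pointed types induces an isomorphism on $\pi_k$ for $k \leq m$, specialised to $m := n - 2$, then immediately gives the conclusion. If a self-contained argument is preferred, one can unfold the fibre sequence: the homotopy fibre $\fibre_{\eta^{(n)}}(y)$ is $(n-2)$-connected, so $\pi_j$ of the fibre vanishes for all $j \leq n-2$, and the long exact sequence in homotopy then delivers isomorphisms on $\pi_k$ in the stated range.

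There is really no substantive obstacle here, since the content is already packaged in Lemma~\ref{lem:suspmodalitytoconn}. The only care required is bookkeeping around the HoTT conventions for truncation levels: one needs to check that the notion of ``$(n-2)$-connected map'' appearing in Lemma~\ref{lem:suspmodalitytoconn}, namely that each homotopy fibre has contractible $(n-2)$-truncation, lines up exactly with the hypothesis needed for the homotopy-group isomorphism theorem, and that the basepoint on $\bigcirc^{(n)} X$ is taken to be $\eta^{(n)}(x)$ so that $\pi_k(\eta^{(n)})$ is well defined. Once this indexing is pinned down the proof is essentially a one-liner.
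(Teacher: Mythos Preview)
Your argument is correct: Lemma~\ref{lem:suspmodalitytoconn} gives that $\eta^{(n)}$ is $(n-2)$-connected, and the standard HoTT result (e.g.\ \cite[Corollary~8.8.5]{hottbook}) that an $m$-connected map induces isomorphisms on $\pi_k$ for $k \leq m$ finishes it. The basepoint bookkeeping you flag is real but harmless, since $(n-2)$-connectedness with $n-2 \geq 0$ already makes $\eta^{(n)}$ surjective on $\pi_0$.

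The paper takes a different route. Rather than invoking the connected-map-to-isomorphism theorem, it uses the identity $\Omega^{k}(\bigcirc^{(n)} X) \simeq \bigcirc^{(n-k)}(\Omega^{k} X)$ coming from the definition of the suspended modality (cf.\ \cite{christensenopierijkescoccola}), and then argues that the $0$-truncation of $\bigcirc^{(n-k)} Y$ agrees with that of $Y$ whenever $n-k \geq 2$: the unit $Y \to \bigcirc^{(n-k)} Y$ is $0$-connected by Lemma~\ref{lem:suspmodalitytoconn}, so composing with $|{-}|_0$ and invoking uniqueness of connected/truncated factorisations gives $\|\bigcirc^{(n-k)} Y\|_0 = \|Y\|_0$. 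Your approach is the textbook one-liner; the paper's approach trades the black-box long exact sequence for the modality/loop-space commutation, which keeps the argument closer to the modal framework used throughout and is exactly the computation pattern reused in the harder boundary case Proposition~\ref{prop:deletemodminus1}.
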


\begin{proof}
  By Lemma~\ref{lem:suspmodalitytoconn} the unit map \(Y \to \bigcirc^{(n - k)} Y\) is \(0\)-connected for any \(Y\). The truncation map \(\bigcirc^{(n - k)} Y \to \| \bigcirc^{(n)} Y \|_{0}\) is also \(0\)-connected. Hence the composition \(Y \to \| \bigcirc^{(n - k)} Y \|_{0}\) is \(0\)-connected. Hence \(\| \bigcirc^{(n - k)} Y \|_{0} = \| Y \|_{0}\) by uniqueness of factorisations. Applying this with \(Y = \Omega^{k} X\), we can reason as follows.
  \begin{align*}
    \pi_{k}(\bigcirc^{(n)} X) &= \| \Omega^{k} (\bigcirc^{(n)} X) \|_{0} \\
                              &= \| \bigcirc^{(n - k)} \Omega^{k} X \|_{0} \\
                              &= \| \Omega^{k} X \|_{0} \\
    &= \pi_{k}(X) \qedhere
  \end{align*}
\end{proof}

The case \(n = k + 1\) is harder, and requires the additional assumption that we already know \(\pi_{k}(A)\) is \(\neg \neg\)-separated:
\begin{lemma}
  \label{lem:deletesettrunc}
  Suppose that \(\| X \|_{0}\) is \(\nabla\)-separated. Then for any modality \(\bigcirc\) with \(\bigcirc \leq_{T} \nabla\), we have \(\| \bigcirc^{(1)} X \|_{0} = \| X \|_{0}\).
\end{lemma}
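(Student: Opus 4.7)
The plan is to show that the hypothesis already forces $\| X \|_0$ to be $\bigcirc^{(1)}$-modal, whence $\| X \|_0$ and $\| \bigcirc^{(1)} X \|_0$ both have the universal property of the set-truncation factoring through $\bigcirc^{(1)} X$ and can be identified by a quick two-map argument.

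First I would verify that $\| X \|_0$ is $\bigcirc^{(1)}$-modal, i.e., $\bigcirc$-separated. By hypothesis the identity types of $\| X \|_0$ are $\nabla$-modal, and by the assumption $\bigcirc \leq_T \nabla$ (in the ordering of Proposition/Definition of this section), every $\nabla$-modal type is $\bigcirc$-modal. So the identity types of $\| X \|_0$ are $\bigcirc$-modal, which is exactly $\bigcirc$-separatedness, hence $\bigcirc^{(1)}$-modality.

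Next I would construct maps in both directions by successive applications of universal properties. Since $\| X \|_0$ is $\bigcirc^{(1)}$-modal, the composite $X \xrightarrow{\eta_0} \| X \|_0$ extends uniquely along $\eta^{(1)} : X \to \bigcirc^{(1)} X$ to a map $g : \bigcirc^{(1)} X \to \| X \|_0$; since $\| X \|_0$ is a set, $g$ in turn factors through the set-truncation unit $\eta_0^{\bigcirc^{(1)}X}$ as $\bar{g} : \| \bigcirc^{(1)} X \|_0 \to \| X \|_0$. In the other direction, $X \xrightarrow{\eta^{(1)}} \bigcirc^{(1)} X \xrightarrow{\eta_0^{\bigcirc^{(1)}X}} \| \bigcirc^{(1)} X \|_0$ lands in a set, so factors through $\eta_0$ as $h : \| X \|_0 \to \| \bigcirc^{(1)} X \|_0$.

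Finally I would check that $\bar{g}$ and $h$ are mutually inverse. The key observation is that $\eta^{(1)}$ is $(-1)$-connected by Lemma~\ref{lem:suspmodalitytoconn} (applied to $n=1$), hence surjective, and the set-truncation units are surjective, so the composite $X \to \| \bigcirc^{(1)} X \|_0$ is surjective. Since $\| X \|_0$ and $\| \bigcirc^{(1)} X \|_0$ are sets, any two parallel maps between them that agree after precomposing with a surjection are equal (the equaliser is a proposition which holds on a surjective image). Both $\bar{g} \circ h$ and the identity on $\| X \|_0$ send $\eta_0(x)$ to $\eta_0(x)$, and both $h \circ \bar{g}$ and the identity on $\| \bigcirc^{(1)} X \|_0$ send $\eta_0^{\bigcirc^{(1)}X}(\eta^{(1)}(x))$ to itself, in each case by a direct unfolding of how $\bar{g}$ and $h$ were defined. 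I do not anticipate a real obstacle here; the only mildly delicate point is keeping straight the two universal properties (of $\eta^{(1)}$ and of $\eta_0$) used to build $\bar{g}$ and $h$, but the surjectivity of $\eta^{(1)}$ from Lemma~\ref{lem:suspmodalitytoconn} makes the uniqueness step routine.
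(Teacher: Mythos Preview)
Your proposal is correct. The approach differs slightly from the paper's: the paper shows directly that the canonical map $\| X \|_0 \to \| \bigcirc^{(1)} X \|_0$ is both surjective (via Lemma~\ref{lem:suspmodalitytoconn}, as you do) and injective, the latter by the chain
\[
|\eta^{(1)}(x)|_0 = |\eta^{(1)}(y)|_0 \;\Leftrightarrow\; \|\bigcirc(x=y)\|_{-1} \;\Rightarrow\; \|\nabla(|x|_0 = |y|_0)\|_{-1} \;\Leftrightarrow\; |x|_0 = |y|_0,
\]
using $\bigcirc \leq_T \nabla$ and the $\nabla$-separatedness hypothesis. You instead package the same ingredients into the single observation that $\|X\|_0$ is already $\bigcirc^{(1)}$-modal, then build an explicit inverse via the reflection property of $\eta^{(1)}$ and the set-truncation unit. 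Your route has the advantage of isolating the one conceptual step (modality of $\|X\|_0$) up front; the paper's route avoids constructing the inverse by hand and instead appeals to the standard fact that a bijection of sets is an equivalence. Either works without difficulty.
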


\begin{proof}
  We have in any case a canonical map \(\| X \|_{0} \to \| \bigcirc^{(1)} X \|_{0}\). The map is surjective, i.e. \(-1\)-connected, by the same argument as in Proposition~\ref{prop:deletemodminus2}. Hence, we just need to check that the map is injective. Since truncation is surjective, it suffices to show that for \(x, y : X\), if \(| \eta^{\bigcirc^{(1)}}(x) |_{0} = | \eta^{\bigcirc^{(1)}}(y) |_{0}\), then \(| x |_{0} = | y |_{0}\). However, we can calculate
  \begin{align*}
    | \eta^{\bigcirc^{(1)}}(x) |_{0} = | \eta^{\bigcirc^{(1)}}(y) |_{0}
    &= \| \eta^{\bigcirc^{(1)}}(x) = \eta^{\bigcirc^{(1)}}(y) \|_{-1} \\
    &= \| \bigcirc (x = y) \|_{-1} \\
    &\leq \| \nabla(x = y) \|_{-1} \\
    &\leq \| \nabla(|x|_{0} = |y|_{0}) \|_{-1} \\
    &= \| |x|_{0} = |y|_{0} \|_{-1} \\
    &= |x|_{0} = |y|_{0} \qedhere
  \end{align*}
\end{proof}

\begin{prop}
  \label{prop:deletemodminus1}
  Suppose that \(X\) is a type such that \(\pi_{n}(X)\) is \(\neg\neg\)-separated for some \(n \geq 1\). Then \(\pi_{n}(\bigcirc^{(n + 1)} X) = \pi_{n}(X)\).
\end{prop}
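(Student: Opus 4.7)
The plan is to reduce the claim to Lemma~\ref{lem:deletesettrunc} by commuting the loop spaces past the iterated suspension modality. The crucial input is the identity
\[
\Omega(\bigcirc^{(k+1)} Y, \eta(y_{0})) \;\simeq\; \bigcirc^{(k)} \Omega(Y, y_{0})
\]
for any pointed type \((Y, y_{0})\) and any \(k \geq 0\). This follows from the definition of the suspension modality: a type \(Z\) is \(\bigcirc^{(k+1)}\)-modal precisely if its identity types are \(\bigcirc^{(k)}\)-modal, so \(\Omega(\bigcirc^{(k+1)} Y, \eta(y_{0}))\) is \(\bigcirc^{(k)}\)-modal and receives a canonical map from \(\Omega Y\); the universal property combined with the fact that \(\bigcirc^{(k+1)}\) is generated by inverting maps whose loop spaces are \(\bigcirc^{(k)}\)-connected (equivalently, by the description of \(\Sigma \bigcirc\) in \cite{christensenopierijkescoccola}) then identifies this with the unit map \(\Omega Y \to \bigcirc^{(k)} \Omega Y\).

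First, I would pick a basepoint \(x_{0} : X\) and iterate the above equivalence \(n\) times to obtain
\[
\Omega^{n}(\bigcirc^{(n+1)} X) \;\simeq\; \bigcirc^{(1)} \Omega^{n} X,
\]
with basepoints chosen compatibly from \(x_{0}\). Taking \(0\)-truncation gives
\[
\pi_{n}(\bigcirc^{(n+1)} X) \;=\; \|\Omega^{n} \bigcirc^{(n+1)} X\|_{0} \;=\; \|\bigcirc^{(1)} \Omega^{n} X\|_{0}.
\]

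Second, I would apply Lemma~\ref{lem:deletesettrunc} with \(Y := \Omega^{n} X\). Its hypothesis is that \(\|Y\|_{0} = \pi_{n}(X)\) is \(\nabla\)-separated, which is exactly the assumption of the proposition. (The side condition \(\bigcirc \leq_{T} \nabla\) holds in the intended application to oracle modalities \(\bigcirc_{\chi}\), since each \(\chi(a){\downarrow}\) is a \(\neg\neg\)-dense proposition when the codomain of \(\chi\) is \(\neg\neg\)-separated, so every \(\nabla\)-modal type is automatically \(\bigcirc_{\chi}\)-modal.) This yields \(\|\bigcirc^{(1)} \Omega^{n} X\|_{0} = \|\Omega^{n} X\|_{0} = \pi_{n}(X)\), completing the proof.

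The main obstacle is justifying the loop-space commutation \(\Omega \bigcirc^{(k+1)} Y \simeq \bigcirc^{(k)} \Omega Y\) cleanly; once it is in hand, the rest is formal. Because this is a standard fact in the theory of suspension modalities developed in \cite{christensenopierijkescoccola}, in the write-up I would either cite it directly or sketch it by checking that, for any pointed \(\bigcirc^{(k)}\)-modal type \(T\), pointed maps \(\Omega Y \to T\) correspond to pointed maps \(\bigcirc^{(k+1)} Y \to BT\) (where \(BT\) is the delooping realized as a pointed connected type with loop space \(T\)) and using that \(BT\) is \(\bigcirc^{(k+1)}\)-modal because its identity types are \(\bigcirc^{(k)}\)-modal.
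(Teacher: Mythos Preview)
Your proposal is correct and follows essentially the same route as the paper: reduce to \(\|\bigcirc^{(1)}\Omega^{n}X\|_{0}\) via the commutation \(\Omega^{n}\bigcirc^{(n+1)}X \simeq \bigcirc^{(1)}\Omega^{n}X\), then invoke Lemma~\ref{lem:deletesettrunc} with \(Y=\Omega^{n}X\). The paper's write-up is terser---it records the two-line equality chain and cites the lemma---while you spell out the loop-space commutation and the side condition \(\bigcirc \leq_{T} \nabla\) more explicitly, but the argument is the same.
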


\begin{proof}
  We first observe the following equality.
  \begin{align*}
    \pi_{n}(\bigcirc^{(n + 1)} X) &= \| \Omega^{n}(\bigcirc^{(n + 1)} X) \|_{0} \\
    &= \| \bigcirc^{(1)} \Omega^{n} X \|_{0}
  \end{align*}
  However, we can now apply Lemma~\ref{lem:deletesettrunc} to \(\Omega^{n} X\) to verify the result.
\end{proof}

\begin{example}
  Recall that \(\pi_{n}(\mathbb{S}^{n}) = \ZZ\), e.g. from \cite[Theorem 8.6.17]{hottbook}), which is \(\neg \neg\)-separated. Hence for \(k > n\), we have \(\pi_{n}(\bigcirc^{(k)}_{\chi} \mathbb{S}^{n}) = \ZZ\).
\end{example}

Finally, we give some examples to illustrate that \(\bigcirc^{(n)} X\) can affect the homotopy groups of sufficiently high dimension.
\begin{example}
  To illustrate that the condition of \(\neg \neg\)-separation is necessary in Proposition~\ref{prop:deletemodminus1}, we give the following class of examples. Let \(\chi : \NN \to \nabla_{0} \NN\) be non computable, and fix \(k : \NN\). Take \(Y_{n}\) to be the result of taking the suspension of \(\chi(n) {\downarrow}\) \(k + 1\) times, \(Y_{n} := \Sigma^{k + 1} \chi(n) {\downarrow}\). Note that each \(Y_{n}\) is a generator of \(\bigcirc_{\chi}^{(k + 1)}\) and so \(\bigcirc_{\chi}^{(k + 1)}Y_{n} = 1\), and so also \(\pi_{k} (\bigcirc_{\chi}^{(k + 1)}Y_{n}) = 1\). We can compute \(\pi_{k}(Y_{n})\) as follows. We in fact get an explicit description of \(\Omega^{k}(Y_{n})\). Recall that taking the suspension of a type is the same as taking the join with \(2\) \cite[Lemma 8.5.10]{hottbook}. Hence we can calculate as follows.
  \begin{align*}
    Y_{n} &= \Sigma^{k + 1} \chi(n) {\downarrow} \\
          &= \underbrace{2 \ast \ldots \ast 2}_{k + 1 \text{ times}} \ast \chi(n) {\downarrow} \\
    &= \mathbb{S}^{k} \ast \chi(n) {\downarrow}
  \end{align*}
  However, \(\chi(n){\downarrow}\) is a proposition, and so \(- \ast \chi(n){\downarrow}\) is a closed modality, which is lex \cite[Example 3.14]{rijkeshulmanspitters}. We deduce that \(\Omega^{k}(\mathbb{S}^{k} \ast \chi(n) {\downarrow}) = \Omega^{k}(\mathbb{S}^{k}) \ast \chi(n) {\downarrow}\), and so \(\pi_{k} (Y_{n}) = \| \Omega^{k}(\mathbb{S}^{k}) \ast \chi(n) {\downarrow} \|_{0}\). Note that \(\| \Omega^{k}(\mathbb{S}^{k}) \ast \chi(n) {\downarrow} \|_{0}\) is modal for the closed modality \(- \ast \chi(n){\downarrow}\) since if \(\chi(n) {\downarrow}\) then it is contractible, and furthermore that \(\| \Omega^{k}(\mathbb{S}^{k}) \|_{0} \ast \chi(n) {\downarrow}\) is \(0\)-truncated, again using that \(- \ast \chi(n){\downarrow}\) is lex. It follows that in fact \(\| \Omega^{k}(\mathbb{S}^{k}) \ast \chi(n) {\downarrow} \|_{0} = \| \Omega^{k}(\mathbb{S}^{k}) \|_{0} \ast \chi(n) {\downarrow}\). We recall that \(\pi_{k} \mathbb{S}^{k} = \mathbb{Z}\), and so \(\pi_{k}(Y_{n}) = \ZZ \ast \chi(n){\downarrow}\). Hence from \(\pi_{k}(\bigcirc_{\chi}^{(k + 1)} Y_{n}) = \pi_{k} Y_{n}\), we could deduce \(\chi(n) {\downarrow}\). Therefore, when \(\chi\) is non computable we can show, assuming Church's thesis, that it is false that for all \(n\), \(\pi_{k}(\bigcirc_{\chi}^{(k + 1)}Y_{n}) = \pi_{k}(Y_{n})\).
\end{example}

\begin{example}
  The easiest example where the first homotopy group \(\pi_{1}(X)\) is \(\neg \neg\)-separated and altered by \(\bigcirc^{(1)}\) is as follows. Recall that \(\Omega(\sone) = \ZZ\). Hence as a special case of \cite[Corollary 3.5]{christensenopierijkescoccola}, we have \(\Omega(\bigcirc^{(1)}_{\chi}(\sone)) = \bigcirc_{\chi} \ZZ\), and so \(\pi_{1}(\bigcirc^{(1)}_{\chi}(\sone)) = \bigcirc_{\chi} \ZZ \neq \ZZ = \pi_{1}(\sone)\), with the inequality holding when \(\bigcirc_{\chi}\) is non trivial, e.g. when \(\chi\) is non computable and Church's thesis holds.
\end{example}

\section{Conclusion and future work}
\label{sec:conclusion}

We have given a new synthetic definition of Turing reducibility using modalities. We formulated the axiom of Markov induction, and demonstrated its use in some synthetic proofs of Turing reducibility. Although simple, each case required an unbounded search that would not otherwise be possible. Moreover, we observe that these examples also made use of an unbounded, albeit finite, number of oracle queries. As such we can see that the arguments in these examples would not apply to many-one reducibility or truth table reducibility, therefore requiring the full power of Turing reducibility. We also gave an explicit proof that this definition of Turing reducibility is strictly stronger than weak truth table reducibility in Section~\ref{sec:comparison-with-weak}. We have also seen some first signs of interaction between computability theory and homotopy theory in Sections \ref{sec:effect-non-comp} and \ref{sec:some-observ-modal}.

However, this is intended as laying the foundation for future work. We suggest a few possible directions for further research below.
\subsection*{Synthetic computability theory} We have developed an approach to synthetic Turing reducibility based on modalities in cubical assemblies. This was based on Hyland's embedding of Turing degrees in the lattice of subtoposes of the effective topos but differs in a few key ways. The main difference is that replacing Lawvere-Tierney topologies with topological modalities gives us the potential to combine computability theory with higher types, with some promising signs of such interaction in Sections \ref{sec:effect-non-comp} and \ref{sec:some-observ-modal}. However, even putting aside higher types, we also developed an approach that is more synthetic than Hyland's. Instead of working externally to cubical assemblies, we identified the axioms of Markov induction and computable choice as sufficient for carrying out some simple examples of synthetic arguments, which we have formalised using the cubical Agda proof assistant. In particular the axiom of Markov's principle, commonly assumed in recursive constructive mathematics, is apparently insufficient for analogous arguments using computations with an oracle. In contrast, the stronger axiom of Markov induction has allowed us to show, for example, that we obtain the same class of modalities whether we only use functions \(\NN \to \nabla 2\) as oracles, or whether we allow all functions \(\NN \to \nabla \NN\).

\subsection*{Separating Turing degrees with homotopy theory} From Theorem~\ref{thm:thtpygp} we can see that, at least in principle, it is possible to study when two Turing degrees are equal by studying homotopy groups, since oracles \(\chi, \chi' : \NN \to \nabla_{0} 2\) have the same Turing degree precisely when the homotopy groups \(\pi_{1}(\nabla \univ, \eta (\bigcirc_{\chi}\NN))\) and \(\pi_{1}(\nabla \univ, \eta(\bigcirc_{\chi'} \NN))\) are isomorphic. We leave it for future work to determine if this is useful in practice for computability theory. However, the idea seems promising: showing that two Turing degrees are separate is a common task in computability theory, whereas in homotopy theory it is common to show two spaces are different by analysing their homotopy or (co)homology groups.

\subsection*{Higher dimensional computability theory} One of the main motivations for using this formulation based on higher modalities is to make it easy to generalise computability theory to higher dimensions in order to find new interactions between computability theory and homotopy theory. We propose one such generalisation here, in a similar spirit to Kihara's generalisations of Turing reducibility based on Lawvere-Tierney topologies in the effective topos \cite{kiharaltt, kiharasdst}, which will include the examples we saw in Section~\ref{sec:some-observ-modal}.

Recall that we recovered the original definition of Turing degree by considering oracles of the form \(\NN \to \nabla_0 2\), which leads to us nullifying only \(\neg\neg\)-dense propositions, or in other words propositions that turn out to be simply true when we add classical logic. This is natural in computability theory: we don't want to study the structure of the function itself, but only what we can compute with it. We therefore propose generalising the definition of Turing degree to include the nullification of countable families of \(\nabla\)-connected types, i.e. spaces that become contractible when we erase all the computational information leaving only an ordinary \(\infty\)-groupoid. This eliminates any structure that can already be studied purely using existing techniques in homotopy theory, leaving only new structure that can only be seen through the interaction of computation and homotopy theory.

We observe that suspension preserves \(\nabla\)-connected types, and so this does indeed include the suspension modalities of Section~\ref{sec:some-observ-modal}.

\subsection*{Automatic relativisation of proofs} As remarked in the introduction, an advantage to working with lex modalities rather than more general monads is that they correspond to reflective subuniverses, which are complete models of HoTT in themselves. This allows us to carry out any proof in HoTT internally in the reflective subuniverse to obtain a relativised version. Although this was important in formulating the theorems and definitions, and giving the intuitive ideas for the proofs, it is not used directly in the computer formalisation, which instead uses syntactic sugar, following the standard approach for working with monads in functional programming. However, this was almost entirely due to the practical difficulties involved in translating proofs by hand. Future work in this area would greatly benefit from the development of an automated approach, using Agda reflection for example.

\bibliographystyle{alpha}
\bibliography{mybib}

\end{document}